\newcommand*\circled[1]{\tikz[baseline=(char.base)]{
            \node[shape=circle,draw,inner sep=1pt] (char) {#1};}}
\newcommand{\ULozenge}[1]{
\draw #1-- ($#1 - (Z)$) -- ($#1 + (X)$)-- ($#1 - (Y) $)--cycle;
}
\newcommand{\fULozenge}[1]{
\filldraw [fill=red] #1-- ($#1 - (Z)$) -- ($#1 + (X)$)-- ($#1 - (Y) $)--cycle;
}
\newcommand{\VLozenge}[1]{
\draw #1 -- ($#1 + (X)$) -- ($#1 + (X) + (Z)$) -- ($#1 + (Z)$) -- cycle;
}
\newcommand{\fVLozenge}[1]{
\filldraw [fill=yellow] #1 -- ($#1 + (X)$) -- ($#1 + (X) + (Z)$) -- ($#1 + (Z)$) -- cycle;
}
\newcommand{\WLozenge}[1]{
\draw #1 -- ($#1 + (X)$) -- ($#1 + (X) - (Y)$)-- ($#1 - (Y)$) -- cycle;
}
\newcommand{\fWLozenge}[1]{
\filldraw [fill=green] #1 -- ($#1 + (X)$) -- ($#1 + (X) - (Y)$)-- ($#1 - (Y)$) -- cycle;
}
\newcommand{\fCube}[1]{
\fVLozenge{#1};
\fULozenge{($#1 - (X)$) };
\fWLozenge{($#1 + (Y)$)};
}
\theoremstyle{definition}
\newtheorem{Def}[]{Definition}[section]
\newtheorem{Pro}[Def]{Proposition}
\newtheorem{Rem}[Def]{Remark}
\newtheorem{Lem}[Def]{Lemma}
\newtheorem{Exp}[Def]{Example}
\newtheorem{Theo}[Def]{Theorem}
\newtheorem{Cor}[Def]{Corollary}
\newtheorem{Ques}[Def]{Question}
\newtheorem{Con}[Def]{Construction}
\DeclareMathOperator{\Ext}{Ext}
\DeclareMathOperator{\Hom}{Hom}
\DeclareMathOperator{\RHom}{\mathbb{R}\strut\kern-.2em\operatorname{Hom}}
\DeclareMathOperator{\Ker}{Ker}
\DeclareMathOperator{\SL}{SL}
\DeclareMathOperator{\GL}{GL}
\DeclareMathOperator{\gdim}{gdim}
\DeclareMathOperator{\supp}{supp}
\DeclareMathOperator{\Db}{\mathrm{D}^{\mathrm{b}}}
\DeclareMathOperator{\diag}{diag}
\DeclareMathOperator{\chr}{char}
\newcommand{\bbone}{\text{\usefont{U}{bbold}{m}{n}1}}
\renewcommand{\restriction}{\vert}
\renewcommand{\mod}{\operatorname{mod}}
\renewcommand{\Bbbk}{k}
\renewcommand{\Im}{\operatorname{Im}}
\title{The $3$-preprojective algebras of type $\Tilde{A}$}
\author{Darius Dramburg}
\address{Darius Dramburg, Department of Mathematics, Uppsala University, Box 480, 751 06 Uppsala, Sweden}
\email{darius.dramburg@math.uu.se}
\author{Oleksandra Gasanova}
\address{Oleksandra Gasanova, Faculty of Mathematics, University of Duisburg-Essen}
\email{oleksandra.gasanova@uni-due.de}
\date{\today}
\begin{document}

\begin{abstract}
Let $G \leq \SL_{n+1}(\mathbb{C})$ act on $R = \mathbb{C}[X_1, \ldots, X_{n+1}]$ by change of variables. Then, the skew-group algebra $R \ast G$ is bimodule $(n+1)$-Calabi-Yau. In certain circumstances, this algebra admits a locally finite-dimensional grading of Gorenstein parameter $1$, in which case it is the $(n+1)$-preprojective algebra of its $n$-representation infinite degree $0$ piece, as defined in \cite{HIO}. If the group $G$ is abelian, the $(n+1)$-preprojective algebra is said to be of type $\Tilde{A}$. For a given group $G$, it is not obvious whether $R \ast G$ admits such a grading making it into an $(n+1)$-preprojective algebra. We study the case when $n=2$ and $G$ is abelian. We give an explicit classification of groups such that $R \ast G$ is $3$-preprojective by constructing such gradings. This is possible as long as $G$ is not a subgroup of $\SL_2(\mathbb{C})$ and not $C_2 \times C_2$. For a fixed $G$, the algebra $R \ast G$ admits different $3$-preprojective gradings, so we associate a type to a grading and classify all types. Then we show that gradings of the same type are related by a certain kind of mutation. This gives a classification of $2$-representation infinite algebras of type $\tilde{A}$. The involved quivers are those arising from hexagonal dimer models on the torus, and the gradings we consider correspond to perfect matchings on the dimer, or equivalently to periodic lozenge tilings of the plane. Consequently, we classify these tilings up to flips, which correspond to the mutation we consider. 
\end{abstract}

\maketitle

\section*{Introduction}
Preprojective algebras were defined in \cite{gel1979model} by Gelfand and Ponomarev in the context of representation theory of hereditary algebras. Since then, they have been shown to arise naturally in many areas of mathematics, one famous case being the so-called McKay correspondence, see \cite{MR0604577}. Under this correspondence, finite subgroups of $G \leq \SL_2(\mathbb{C})$ are identified with simply laced Dynkin diagrams, which in turn correspond to the resolution graphs of the quotient singularities $\mathbb{C}^2/G$. A representation-theoretic interpretation of this fact is given by a Morita equivalence between the skew-group algebra $\Bbbk[x,y] \ast G$ and the preprojective algebra $\Pi(\overline{Q})$ of the corresponding extended Dynkin diagram. A reinterpretation of the preprojective algebra due to Baer, Geigle and Lenzing in \cite{BGL} states that one can introduce a grading on the preprojective algebra so that the degree $0$ piece $\Lambda = \Pi(\overline{Q})_0$ is a finite-dimensional hereditary representation infinite algebra $\Lambda = \Bbbk Q$, and such that the preprojective algebra can be recovered as $\Pi(\overline{Q}) = T_\Lambda(\Ext^1_\Lambda(D(\Lambda), \Lambda))$. In this way, one obtains precisely the connected tame hereditary algebras. 

It is an interesting question how to generalise the above situation to higher dimension. On the geometric side, increasing dimension can be done by considering finite subgroups $G \leq \SL_{n+1}(\mathbb{C})$ for $n \geq 1$. On the preprojective algebra side, the generalisation was given in terms of higher-dimensional Auslander-Reiten theory. In \cite{HIO}, Herschend, Iyama and Opperman defined $n$-representation infinite algebras, generalising hereditary representation infinite algebras. In certain cases, their $(n+1)$-preprojective algebras then relate to a finite subgroup $G \leq \SL_{n+1}(\mathbb{C})$ in a way similar to the case $n=1$. Some other connections to (non-commutative) algebraic geometry arise from the fact that $n$-representation infinite algebras are quasi-Fano algebras as defined by Minamoto in \cite{minamoto2012ampleness}, as well as from the work in \cite{BuchweitzHille, MinamotoMori, HIMO}. Furthermore, higher preprojective algebras often lead to interesting triangle equivalences between stable categories of Cohen-Macaulay modules and generalised cluster categories, see \cite{AIR}. It is therefore of interest to describe higher preprojective algebras and corresponding $n$-representation infinite algebras. We refer the reader to \cite{JassoKvamme} for an introduction to higher-dimensional Auslander-Reiten theory. 

The counterpart to the $n$-representation infinite algebras in higher-dimensional Auslander-Reiten theory are called $n$-representation finite algebras, which have been studied extensively, e.g. in \cite{IyamaOppermann, IyamaOppermannStable, HigherNakayama, Herschend_Iyama_2011}, and large classes of examples are known. The $n$-representation finite algebras of type $A$ are classified in \cite{IyamaOppermann}. Fewer examples of $n$-representation infinite algebras have been computed. Currently, the main sources of $n$-representation infinite algebras are tensor product constructions, discussed in \cite{HIO}, the mentioned non-commutative geometry, dimer models, studied in \cite{Nakajima}, and skew-group algebras, such as in \cite{Giovannini}, as well as through higher APR-tilting as proven in \cite{MizunoYamaura}. In this article, we want to expand on the class of $n$-representation infinite algebras coming from skew-group algebras and study their higher APR-tilts. 

Deciding whether a given algebra is $n$-representation infinite is a non-trivial problem. A general strategy for computing $n$-representation infinite algebras therefore starts with their higher preprojective algebras. In detail, a finite-dimensional algebra $\Lambda$ is called $n$-representation infinite if $\gdim(\Lambda) \leq n$ and $ \nu_n^{-i}(\Lambda)$ is quasi-isomorphic to a complex concentrated in degree $0$ for all $i\geq 0$, where $\nu_n^{-1} = \nu^{-1} \circ [n]$ and $\nu$ is the derived Nakayama functor. The higher preprojective algebra is 
\[ \Pi_{n+1}(\Lambda) = T_\Lambda(\Ext^{n+1}_\Lambda(D(\Lambda), \Lambda)).\]
This algebra is bimodule $(n+1)$-Calabi-Yau, and as a tensor algebra it comes equipped with a natural grading. This grading is locally finite-dimensional and of Gorenstein parameter $1$, and the original $\Lambda$ can be recovered as its degree $0$ part. In fact, it is known from \cite{KellerVandenBergh, MinamotoMori, HIO} that the assignment $\Lambda \mapsto \Pi_{n+1}(\Lambda)$ gives a bijection between $n$-representation infinite algebras and bimodule $(n+1)$-Calabi-Yau algebras equipped with a locally finite-dimensional grading of Gorenstein parameter $1$, both sides taken up to isomorphism.
Note that the Calabi-Yau property is one of the ungraded algebra, while the Gorenstein parameter is a property of the grading. Thus, to construct $n$-representation infinite algebras, one can consider bimodule $(n+1)$-Calabi-Yau algebras and construct appropriate gradings. This strategy was employed in \cite{AIR, Giovannini} to produce examples and in \cite{Thibault} to prove that certain Calabi-Yau algebras do not arise as higher preprojective algebras. 

We follow this general strategy to give a classification of $2$-representation infinite algebras of type $\Tilde{A}$. This type was defined in \cite{HIO} as those $n$-representation infinite algebras $\Lambda$ for which the higher preprojective algebra is isomorphic to the polynomial ring $R = \Bbbk[X_1, \ldots, X_{n+1}]$ over an algebraically closed field $k$ of characteristic $0$, skewed by the action of an abelian group $G \leq \SL_{n+1}(k)$ acting on the variables, i.e. $\Pi_{n+1}(\Lambda) \simeq R \ast G$. However, as shown by Thibault in \cite{Thibault}, not every skew-group algebra does admit a grading that makes it into a higher preprojective algebra. Our classification for $n=2$ therefore consists of two steps. First, we classify the skew-group algebras which can be endowed with a grading making them into higher preprojective algebras. Then, for a fixed such algebra, we classify all possible gradings, up to isomorphism of the degree $0$ part. 

For the first step, we decompose the skew-group algebra $R \ast G = \Bbbk[X_1, X_2, X_3] \ast G$ into an iterated skew-group algebra $(R \ast H) \ast K$, construct a $3$-preprojective structure on $R \ast H$ using methods from \cite{AIR}, and then use methods from \cite{LeMeur} to extend this to a $3$-preprojective structure on $R \ast G$. This, together with a criterion given in \cite{Thibault} for $R \ast G$ not to be higher preprojective, yields our classification of $3$-preprojective algebras of type $\Tilde{A}$. We later give a second proof which does not rely on the mentioned results.

\begin{Theo}[= \Cref{Theo:SL3 Classification}]
Let $G \leq \SL_3(k)$ be abelian, acting on $R = \Bbbk[X_1, X_2, X_3]$ via change of variables. Then $R \ast G$ can be endowed with a higher preprojective grading if and only if $G$ is not isomorphic to $C_2 \times C_2$ and the embedding of $G$ into $\SL_3(k)$ does not factor through any embedding of $\SL_1(k) \times \SL_2(k)$ into $\SL_3(k)$.      
\end{Theo}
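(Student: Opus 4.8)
The plan is to prove the two implications separately, reading a higher preprojective grading off the combinatorics of the McKay quiver. Since $G\leq\SL_3(k)$ is abelian it is diagonalisable, so after conjugation it lies in the diagonal torus and $k^3$ decomposes into three characters $\chi_1,\chi_2,\chi_3\in\hat G$ with $\chi_1\chi_2\chi_3=1$. The McKay quiver then has vertex set $\hat G$ with an arrow $\rho\to\rho\chi_i$ for each vertex $\rho$ and each $i$, and $R\ast G$ is the Jacobian algebra of the natural potential. A locally finite-dimensional grading of Gorenstein parameter $1$ corresponds to a cut, i.e. a choice of degree-$1$ arrows meeting each term of the potential exactly once, subject to the degree-$0$ (truncated Jacobian) algebra being finite-dimensional. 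Before either direction I would record the reformulation that drives everything: an abelian $G\leq\SL_3(k)$ is isomorphic to $C_2\times C_2$, or factors through some $\SL_1(k)\times\SL_2(k)$, precisely when $G$ contains no fixed-point-free element, i.e. every $g\in G$ has eigenvalue $1$. Indeed, $g$ fixes a coordinate iff $\chi_i(g)=1$ for some $i$, so ``no fixed-point-free element'' reads $G=\ker\chi_1\cup\ker\chi_2\cup\ker\chi_3$; if some $\chi_i$ is trivial this is the $\SL_1\times\SL_2$ case, while if all $\chi_i$ are nontrivial a cover of a group by three proper subgroups forces a $C_2\times C_2$ quotient (Scorza), here realised with the $\chi_i$ of order $2$, and faithfulness $\bigcap_i\ker\chi_i=1$ then forces $\hat G=\langle\chi_1,\chi_2,\chi_3\rangle\cong C_2\times C_2$.

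For the ``only if'' direction I would argue contrapositively and invoke Thibault's obstruction from \cite{Thibault}. In the $\SL_1\times\SL_2$ case one $\chi_i$ is trivial, so the McKay quiver carries a loop at every vertex; this degeneracy is exactly what prevents a valid cut with finite-dimensional degree-$0$ part, and is ruled out by Thibault's criterion. The remaining exceptional configuration, $G\cong C_2\times C_2$ with all three $\chi_i$ of order $2$, is the other case his criterion excludes. Together with the dichotomy above, this shows that if $R\ast G$ admits a higher preprojective grading then $G$ has a fixed-point-free element, and hence is neither $C_2\times C_2$ nor a subgroup through $\SL_1(k)\times\SL_2(k)$.

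For the ``if'' direction I would construct a grading by the iterated skew-group strategy. Given that $G$ is not excluded, the dichotomy provides a fixed-point-free element $g\in G$; set $H=\langle g\rangle$. Then $H$ is cyclic and not contained in $\SL_2(k)$, since its generator has no eigenvalue $1$, and for such cyclic $H$ the skew-group algebra $R\ast H$ is of higher type $A$ and carries a higher preprojective grading by the methods of \cite{AIR}. Writing $K=G/H$, the $G$-action descends to an action of $K$ on $R\ast H$, and I would use the results of \cite{LeMeur} on skew-group algebras of $n$-representation infinite algebras to transport the grading: if the $K$-action preserves the chosen cut, then $(R\ast H)\ast K\cong R\ast G$ is again bimodule $3$-Calabi-Yau with a grading of Gorenstein parameter $1$, i.e. higher preprojective.

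The hard part will be this last extension step. One must choose the cut on $R\ast H$ so that the residual $K$-action preserves it (equivalently, realise $K$ as grading-preserving automorphisms) and verify the hypotheses under which \cite{LeMeur} guarantees that the skew-group algebra of a $2$-representation infinite algebra is again $2$-representation infinite with higher preprojective algebra $(R\ast H)\ast K$; in particular one must confirm that the resulting degree-$0$ algebra stays finite-dimensional. The group-theoretic dichotomy, though elementary, is the other load-bearing input, as it is what pins the exceptional locus to exactly $C_2\times C_2$ together with the $\SL_1(k)\times\SL_2(k)$ embeddings.
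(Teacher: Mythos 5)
Your overall architecture matches the paper's (Thibault's obstruction for the negative direction, an AIR-cut on a cyclic piece lifted through an iterated skew-group decomposition via Le Meur for the positive direction), and your reformulation of the exceptional locus via fixed-point-free elements and Scorza's theorem is a clean way to see the dichotomy. However, there are two genuine gaps.

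First, Thibault's criterion does \emph{not} exclude $C_2 \times C_2$. The criterion applies only when the embedding factors through some $\SL_{d_1}(k) \times \SL_{d_2}(k)$, and for the faithful embedding of the Klein group all three characters $\chi_i$ are nontrivial, so no such factorisation exists. Your own dichotomy shows this is precisely the configuration \emph{not} covered by the $\SL_1 \times \SL_2$ case, so you cannot then hand it back to the same criterion. A separate argument is required; the paper supplies one by a direct analysis of the McKay quiver of $C_2 \times C_2$ (every vertex connected to every other by arrows of three types), showing that any grading of Gorenstein parameter $1$ forces a non-nilpotent cycle to survive in degree $0$, so the degree-$0$ part is infinite-dimensional.

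Second, in the ``if'' direction you set $H = \langle g \rangle$ for a fixed-point-free $g$ and write $K = G/H$. The decomposition $R \ast G \cong (R \ast H) \ast K$ requires $K$ to be a \emph{subgroup} acting on $R \ast H$, i.e.\ an internal direct product $G = H \times K$; a quotient does not act on $R \ast H$, and a cyclic subgroup of an abelian group need not be a direct factor. The existence of a fixed-point-free element (your Scorza dichotomy) is strictly weaker than the existence of a cyclic \emph{direct factor} whose generator is fixed-point-free, which is what the lifting argument actually needs. This is exactly the content of the paper's Lemma on factor decompositions: for $G = C_m \times C_{lm}$ with all $\chi_i$ nontrivial and $|G| > 4$, one can choose a direct factor decomposition $G = H \times K$ with $H$ cyclic and all $\chi_i\restriction_H$ nontrivial. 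That lemma is where the hypothesis $|G| > 4$ (i.e.\ the exclusion of $C_2 \times C_2$) is genuinely used, and it is the load-bearing group theory your proposal omits. Once it is in place, the rest of your lifting step (the $K$-action is automatically by graded automorphisms because parallel arrows in the McKay quiver of an abelian group must have equal degree) goes through as in the paper.
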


We believe that a similar approach is possible to classify all $(n+1)$-preprojective algebras of type $\Tilde{A}$, provided the case of a skew-group algebra $R \ast G$ by a cyclic group $G \leq \SL_{n+1}(k)$ is solved, cf.\ \Cref{Rem: General HPG strategy}. 
We also note a curious connection to singularity theory. Associated to the group $G \leq \SL_3(k)$, we have the quotient singularity given by the invariant ring $R^G$. Amongst the abelian groups $G$, those for which $R \ast G$ is not a higher preprojective algebra are precisely those whose quotient singularity is compound DuVal. In fact, it is easy to see from the quivers in \cite{NdCS} that other compound DuVal quotient singularities do not admit any $3$-preprojective gradings on their associated skew-group algebras. However, there are examples of quotient singularities which are not compound DuVal and whose skew-group algebras do not admit any $3$-preprojective gradings, see \cite[Example 5.5]{BSW}. Also, note that we make no assumption on $R^G$ having an isolated singularity and indeed we construct $3$-preprojective gradings for many algebras $R \ast G$ where $R^G$ does not have an isolated singularity. 

For the classification of $2$-representation infinite algebras, we then fix an algebra $R \ast G $ given by \Cref{Theo:SL3 Classification} and work explicitly with its quiver $Q$, defined by a matrix $B$ associated to the given presentation of $G$, cf.\ \Cref{Pro: Periodicity matrix}. The quiver may admit non-isomorphic $3$-preprojective gradings, called cuts. To each cut we associate a triple of numbers called its type. We prove that the types can be seen as certain lattice points in a simplex, and that cuts of the same type are related by a certain kind of mutation, cf.\ \Cref{Def: Mutation of cuts}. In our proofs, we make use of the theory of periodic lozenge tilings of the plane, which is dual to that of hexagonal dimer models on the torus. The theory of dimer models provides some statements in greater generality. In detail, Nakajima proved in \cite{Nakajima} that the internal lattice points of the perfect matching polygon of a consistent dimer model classify the possible $3$-preprojective cuts of the Jacobian algebra up to mutation of the dimer model. It is also proven that such an internal lattice point exists if the center of the Jacobian algebra has an isolated singularity (which is not $A_1$). Our version of these results reads as follows, where $n = |G|$ and $(\theta_1(C), \theta_2(C), \theta_3(C))$ denotes the type of a cut $C$. 

\begin{Theo}[= \Cref{Theo: Divisibility conditions}]
The type of any higher preprojective cut $C$ satisfies $\theta_1(C) + \theta_2(C) + \theta_3(C) = n $ and $\theta_i(C) > 0$ for $1 \leq i \leq 3$. Furthermore, a triple $(\gamma_1, \gamma_2, \gamma_3) \in \mathbb{N}_{> 0}^{1 \times 3}$ with $\gamma_1 + \gamma_2 + \gamma_3 = n$ is the type of a higher preprojective cut if and only if we have 
    \[ (\gamma_1, \gamma_2) B \in n \mathbb{Z}^{1 \times 2}. \]    
\end{Theo}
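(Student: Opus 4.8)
The plan is to pass to the universal cover $\tilde Q$ of the quiver, whose vertex set is $\mathbb{Z}^2 = \mathbb{Z}^3/\mathbb{Z}(1,1,1)$ with $\chi_1,\chi_2,\chi_3$ realised as the images $\bar e_1,\bar e_2,\bar e_3$ of the standard basis (so $\bar e_1+\bar e_2+\bar e_3=0$), and to write $Q=\tilde Q/\Lambda$ where $\Lambda$ is the rank-$2$ periodicity lattice spanned by the rows of $B$, cf.\ \Cref{Pro: Periodicity matrix}, with $[\mathbb{Z}^2:\Lambda]=|\det B|=n$. I record a cut $C$ by its degree function $d$, with $d(a^{(i)}_x)\in\{0,1\}$ and exactly one degree-$1$ arrow in each oriented $3$-cycle through every position, and set $\theta_i$ to be the number of degree-$1$ arrows of family $i$ in a fundamental domain. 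The two elementary constraints come first: summing the ``one cut arrow per positive $3$-cycle'' identity over the $n$ positive terms and reindexing each family gives $\theta_1+\theta_2+\theta_3=n$ at once; and if some $\theta_i=0$, then every arrow of family $i$ has degree $0$, so all powers $X_i^m$ lie in degree $0$, making the degree-$0$ part infinite-dimensional and contradicting that it is the finite-dimensional $2$-representation infinite algebra $\Lambda=(R\ast G)_0$. Hence each $\theta_i>0$.

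The core is the divisibility criterion, which I would extract from an integral stepped-surface lift. Define $\sigma\colon\mathbb{Z}^2\to\mathbb{Z}^3$ by $\sigma(x+\bar e_i)-\sigma(x)=e_i-d(a^{(i)}_x)(1,1,1)$. Since $e_1+e_2+e_3=(1,1,1)$ and exactly one $d$ equals $1$ on each $3$-cycle, these increments sum to $0$ around every triangle; as the triangles tile the simply connected plane, $\sigma$ is well defined and integer-valued. Because $d$ is $\Lambda$-periodic, the displacement $\mu(\lambda):=\sigma(x+\lambda)-\sigma(x)\in\mathbb{Z}^3$ is independent of $x$ and is a homomorphism $\Lambda\to\mathbb{Z}^3$. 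Averaging the increment formula over a fundamental domain (each family-$i$ step contributes $ne_i-\theta_i(1,1,1)$ as $x$ runs over $\hat G$) gives, for a lift $\lambda=\sum_i k_i\bar e_i$,
\[ \mu(\lambda)=\sum_i k_i e_i-\frac{1}{n}\Big(\sum_i k_i\theta_i\Big)(1,1,1). \]
Integrality of $\mu(\lambda)$ forces $n\mid\sum_i k_i\theta_i$ for each period; taking the two rows of $B$ as generators (with third coordinate $0$) packages these two congruences exactly as $(\theta_1,\theta_2)B\in n\mathbb{Z}^{1\times 2}$, which is the ``only if'' direction.

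For the converse I would reverse this by a linear tiling. Given $\gamma$ with $\gamma_i>0$, $\sum\gamma_i=n$ and $(\gamma_1,\gamma_2)B\in n\mathbb{Z}^2$, consider $\psi(p)=\langle\gamma,p\rangle$ on $\mathbb{Z}^3$. As $\psi(1,1,1)=n$, each coset of $\mathbb{Z}(1,1,1)$ has a unique representative with $\psi$ in a fixed generic window $[c,c+n)$; letting $\sigma(x)$ be that representative defines a monotone surface with $d(a^{(i)}_x)=1$ precisely when the $e_i$-step would overflow the window. Here $0<\gamma_i<n$ guarantees $d\in\{0,1\}$ and the window width $n=\psi(1,1,1)$ forces exactly one overflow per triangle, so $d$ is a cut. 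The hypothesis says $\langle\gamma,\tilde\beta_j\rangle\in n\mathbb{Z}$ for the generators, which makes the assignment $\Lambda$-equivariant, so $d$ descends to $Q$; comparing the resulting $\mu(\beta_j)$ with the averaged identity yields $\langle k^{(j)},\theta-\gamma\rangle=0$ for $j=1,2$, and with $\sum(\theta_i-\gamma_i)=0$ and $\det B\neq 0$ this forces $\theta=\gamma$. Finally, positivity gives finite-dimensionality of the degree-$0$ part: along a degree-$0$ monomial $X_1^aX_2^bX_3^c$ the net change of $\psi\circ\sigma$ equals $a\gamma_1+b\gamma_2+c\gamma_3$, which must lie in $(-n,n)$ because both endpoints lie in the window; since $\gamma_i>0$ this bounds $a,b,c$, so only finitely many monomials survive in degree $0$ and the cut is genuinely higher preprojective.

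I expect the main obstacle to be the bookkeeping that makes the stepped-surface lift simultaneously well defined and $\Lambda$-equivariant: proving that $\sigma$ descends to $Q$ exactly when the divisibility holds, and matching the two resulting congruences with the row-times-$B$ formula (in particular the transpose and coordinate conventions fixed in \Cref{Pro: Periodicity matrix}). The positivity-implies-finite-dimensional step is the other delicate point, but the window estimate above settles it once the linear tiling is constructed.
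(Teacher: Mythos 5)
Your proposal is correct and follows essentially the same route as the paper's: your $\mathbb{Z}^3$-valued stepped surface is just the vector-valued refinement of the paper's height function $h_T$ (its coordinate sum), your displacement homomorphism $\mu$ together with the averaging identity reproduces \Cref{heightvalues} and yields the ``only if'' direction exactly as in the paper, and your window construction for the converse produces precisely the tiling of \Cref{Pro: General xi-formula for T}, which the paper obtains by pulling back an AIR-cut along the homomorphism $\xi_\gamma$ of \Cref{Lem: xi is a hom}. The only differences are in packaging: you check finite-dimensionality of the degree-$0$ part directly from the window estimate rather than via \Cref{Pro: HPPCut iff positive and homogeneous}, and you avoid invoking \Cref{Theo:AIR cuts}, at the cost of not exhibiting the extra structure (realisation of every type by a lifted AIR-cut) that the paper exploits in \Cref{Cor: Tilting equiv to skewed AIR}.
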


\begin{Theo}[= \Cref{Theo: Mutation is transitive}]
Two cuts $C_1$, $C_2$ have the same type if and only if they are connected by a mutation sequence, i.e. they fulfill $\theta(C_1) = \theta(C_2)$ if and only if there exists a sequence of vertices $v_1, \ldots, v_m \in Q_0$ such that $(\mu_{v_m} \circ \cdots \circ \mu_{v_1})(C_1)  = C_2$.    
\end{Theo}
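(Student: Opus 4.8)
The plan is to prove the two implications separately; the implication that a mutation sequence forces equal type is the routine direction, while the converse is the substantial one. For the easy direction it suffices to check that a single mutation $\mu_v$ leaves $\theta$ invariant, since then any composite does. By \Cref{Def: Mutation of cuts}, the operation $\mu_v$ only alters whether the arrows incident to $v$ belong to the cut, trading the arrows entering $v$ for those leaving $v$. On the other hand, each $\theta_i(C)$ is an intersection count of the cut $C$ against a fixed cycle in $Q$ representing a generator of the torus homology, as encoded by the matrix $B$ of \Cref{Pro: Periodicity matrix}. Since the number of cut arrows at $v$ crossing such a cycle "inward" equals the number crossing "outward", the local change at $v$ contributes $0$ to each count, so $\theta$ is preserved. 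Together with \Cref{Theo: Divisibility conditions}, this shows that mutation stays within a single type.

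For the converse I would pass to the geometric model. Each cut $C$ corresponds to a periodic lozenge tiling of the plane, equivalently to a height function $h_C$ whose stepped surface is the tiling. This height function is not periodic but quasi-periodic: translating by a period vector $\ell$ changes $h_C$ by a linear quantity determined by $\theta(C)$, so that the \emph{tilt} (average slope) of $h_C$ is exactly the type. Under this dictionary a single flip of the tiling---adding or removing one unit cube at a local extremum of the surface---corresponds precisely to one mutation $\mu_v$ at the vertex $v$ indexing that cube. Conceptually, the height functions of a fixed tilt form a lattice under pointwise $\min$ and $\max$, with flips as covering relations, which already implies that any two cuts of equal tilt are flip-connected.

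Concretely, suppose $\theta(C_1) = \theta(C_2)$. Then $h_{C_1}$ and $h_{C_2}$ have equal tilt, so the difference $g := h_{C_1} - h_{C_2}$ is genuinely periodic and descends to a bounded integer-valued function on the torus; after normalisation we may take $g \geq 0$. If $g \not\equiv 0$, it attains a positive maximum at some vertex $x$, and the local shape of the two stepped surfaces forces $C_1$ to admit a downward flip at $x$ that is not obstructed by $C_2$. Performing the corresponding mutation strictly decreases the nonnegative integer potential $\Phi := \sum_{x \in F} g(x)$ over a fundamental domain $F$. Iterating, $\Phi$ drops at each step and is bounded below, so the process terminates at $g \equiv 0$, i.e. $h_{C_1} = h_{C_2}$, whence $C_1$ is carried to $C_2$ by a finite mutation sequence. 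This is the periodic analogue of the classical Thurston argument connecting two tilings of fixed boundary data by flips.

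The main obstacle is the local analysis at the extrema of $g$: one must show that a vertex realising $\max(h_{C_1} - h_{C_2})$ always supports a legitimate flip of $C_1$ toward $C_2$, that this flip is an admissible cut mutation in the sense of \Cref{Def: Mutation of cuts} (so the mutated subset remains a cut and the surface remains a valid tiling), and that it strictly decreases $\Phi$. Making this precise requires the exact flip--mutation dictionary together with care in the periodic setting to guarantee that equal type is genuinely equivalent to equal tilt, so that $g$ is bounded and its extrema are attained on the compact quotient.
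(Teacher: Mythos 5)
Your overall strategy for the substantive direction coincides with the paper's: pass to periodic lozenge tilings, form the difference $g = \tfrac{1}{3}(h_{C_1} - h_{C_2})$ of height functions, use equality of types (via \Cref{heightvalues}) to see that $g$ is integer-valued and $L_1$-periodic, and decrease a potential summing $g$ over a fundamental domain by flips. However, the step you yourself flag as ``the main obstacle'' is a genuine gap, and in the form you state it the claim is false: a vertex realising $\max g$ need not support a flip of $C_1$. A flip is only available at a source or a sink of the cut quiver (three lozenges of distinct types meeting), and a maximiser of $g$ whose neighbours also attain the maximum need not be such a vertex --- the two tilings may well agree in a whole neighbourhood of a maximiser. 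The paper closes exactly this gap: it starts at any vertex with $\tilde{h}>0$, observes that $\tilde{h}$ is non-decreasing along arrows of $T_1$, and follows a maximal directed path in $T_1$; such a path terminates because the cut quiver is acyclic (\Cref{Lem: Positive type is acyclic}, which itself needs $\gamma_1,\gamma_2,\gamma_3>0$ and a height-function computation), and its endpoint is a sink with $\tilde{h}>0$ at which the flip is legal and decreases the potential by exactly $1$. Without this device (or the lattice-under-pointwise-$\min$/$\max$ structure you allude to, which in the periodic setting would itself require proof), your induction does not get off the ground. Your normalisation ``we may take $g \geq 0$'' is also unavailable as stated, since $g(0)=0$ is forced by the definition of the height functions; the paper instead sums only the positive values of $\tilde{h}$ and, once these are exhausted, swaps the roles of $T_1$ and $T_2$.

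For the easy direction your justification is off target: the type is by definition the number of cut arrows of each of the three kinds (\Cref{Def: Type of a Cut}), not an intersection count against homology cycles encoded by $B$, and the invariance under $\mu_v$ follows simply from the fact that every vertex of the Cayley graph has exactly one incoming and one outgoing arrow of each type, so that exchanging the incoming arrows at $v$ for the outgoing ones preserves each of the three counts. Your conclusion is correct, but the argument you give would itself require establishing the homological reinterpretation of the type, which the paper never needs.
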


A consequence of the explicit condition in \Cref{Theo: Divisibility conditions} is that we obtain a construction of the possible types for arbitrary abelian $G \leq \SL_3(k)$. This allows us to reprove \Cref{Theo:SL3 Classification} independently of \cite{AIR, LeMeur, Thibault}.

Our proof of \Cref{Theo: Divisibility conditions} is constructive, and we give a formula for constructing a periodic lozenge tiling, hence a perfect matching, for any type, cf.\ \Cref{tilingconstr}. The tiling we construct enjoys some special properties, which shows that any $2$-representation infinite algebra $\Lambda$ of type $\Tilde{A}$ is mutation-equivalent to a skew-group algebra $\Lambda' \ast C_m$, where $\Lambda'$ arises from the construction in \cite{AIR}, cf.\ \Cref{Cor: Tilting equiv to skewed AIR}. The authors of \cite{AIR} mention that it would be interesting to generalise their results to non-cyclic groups, so our statement makes the connection between abelian and cyclic groups explicit.

Our theorems on lozenge tilings are based on unpublished work \cite{Martin} by Ibeas Martín, but we employ different methods of proof and hence reprove the corresponding statements classifying periodic lozenge tilings of the plane. 

\subsection*{Outline}
In \Cref{Sec:skew-group algebras}, we recall the construction of a skew-group algebra and summarise an important construction in terms of derivation algebras of superpotentials. 

In \Cref{Sec: HPAs}, we recall the definition of $n$-representation infinite algebras and their higher preprojective algebras, as well as some results to prove or disprove the existence of higher preprojective gradings on some skew-group algebras. We give a detailed description of the interplay between Koszul-gradings and higher preprojective gradings, and state a crucial assumption on our gradings.

In \Cref{Sec: Abelian groups}, we summarise basics on the representation theory of abelian groups. 

In \Cref{Sec: Graded actions} we then explain a general strategy to classify $(n+1)$-preprojective algebras of type $\Tilde{A}$. This is used in \Cref{Sec: Classification1} to prove \Cref{Theo:SL3 Classification}. 

In \Cref{Sec: Tilings}, we reinterpret $3$-preprojective gradings as periodic lozenge tilings of the plane. We introduce invariants, classify them, and prove that gradings of the same invariant are related by mutation. This proves \Cref{Theo: Divisibility conditions}, \Cref{Theo: Mutation is transitive} and provides a new proof of \Cref{Theo:SL3 Classification}. 

\subsection*{Notation}
We use the following notation and conventions throughout. 
\begin{enumerate}
    \item Unless specified otherwise, we denote by $\Bbbk$ an algebraically closed field of characteristic $0$.
    \item All undecorated tensors $\otimes $ are taken over the field $\Bbbk$.
    \item We denote by $\diag(a_1, \ldots, a_d) $ the diagonal $(d \times d)$-matrix with diagonal entries $a_1, \ldots, a_d$. 
    \item We write $\frac{1}{n}(e_1, \ldots, e_d)$ for the diagonal matrix $\diag(\zeta^{e_1}, \ldots, \zeta^{e_d})$ of order $n$, where $\zeta$ is a primitive $n$-th root of unity which will be specified when necessary. 
    \item We denote by $n$ and $d$ positive integers, and by $G$ a finite group of order $n$ and by $R = \Bbbk [X_1 , \ldots, X_d]$ the polynomial ring in $d$ variables.
    \item A quiver $Q$ consists of vertices $Q_0$, arrows $Q_1$ and the source resp. target maps $s, t \colon Q_1 \to Q_0$.
    \item For $ a \in \mathbb{Z}$ and $n \in \mathbb{N}$, we write $a \bmod n$ for the smallest non-negative representative of the residue class $a + n\mathbb{Z}$. 
    \item For $ a, b \in \mathbb{Z}$ and $n \in \mathbb{N}$ we write $ a \equiv_n b$ if $a+ n \mathbb{Z} = b + n \mathbb{Z}$. We drop the subscript $n$ when no confusion is possible. 
\end{enumerate}

\section{Skew-group algebras and McKay quivers}\label{Sec:skew-group algebras}
Given a $\Bbbk$-algebra $A$ and a group $G$ acting on $A$ via automorphisms, one constructs the skew-group algebra as the tensor product 
\[ A \otimes  \Bbbk G \]
with multiplication given by 
\[ (a \otimes g) \cdot (b \otimes h) = (ag(b) \otimes gh).  \]
The particular setup that is of interest to us is the case where a finite subgroup $G \leq \SL_d(k)$ acts on a polynomial ring $R = \Bbbk[X_1, \ldots, X_d]$. More precisely, $G$ acts by left-multiplication on $V = \Bbbk^d$, and hence on the polynomial ring $R = \Bbbk[V]$. The dependence on $V$ will be suppressed when it is clear from context, in particular when we speak about a given subgroup $G \leq \SL_d(k)$. We will investigate properties of the skew-group algebra $R \ast G$. In the following, we collect known results which will be important for us.

\subsection{Superpotentials and McKay quivers}\label{SSec:SuperpotentialsandMcKay}
We recall some results from \cite{BSW}, and refer the reader to the article for details and proofs. In the article, Bocklandt, Schedler and Wemyss establish how to obtain a projective bimodule resolution of $R \ast G$ from the Koszul resolution of $R$. For this, we consider $N_i = R \otimes \bigwedge^i V \otimes R$, where $0 \leq i \leq d$. These are projective $R$-bimodules, and a projective bimodule resolution of $R$ is then given by
\[ 0 \rightarrow N_d \rightarrow N_{d-1} \rightarrow \cdots \rightarrow N_0 \rightarrow  R \rightarrow 0. \]
Note that the $N_i$ are already $(R \ast G)$-$R$-bimodules. We define $M_i = N_i \otimes \Bbbk G$ with $R\ast G$-bimodule structure given by 
\[ (r_1 \otimes g_1) (s_1 \otimes v \otimes s_2 \otimes h) (r_2 \otimes g_2) = r_1 g_1(s_1) \otimes g_1(v) \otimes s_2 h(r_2) \otimes hg_2    \] 
for $(r_1 \otimes g_1), (r_2 \otimes g_2) \in R \ast G$ and $(s_1 \otimes v \otimes s_2 \otimes g) \in R \otimes \bigwedge^i V \otimes R \otimes \Bbbk G = M_i$. 
From the sequence above, we obtain an exact sequence
\[ 0 \rightarrow M_d \rightarrow M_{d-1} \rightarrow \cdots \rightarrow M_0  \rightarrow R \otimes \Bbbk G \rightarrow 0. \]
By rewriting the terms in the sequence as 
\[ M_i = (R \otimes \Bbbk G )\otimes_{\Bbbk G} (\bigwedge^i V \otimes \Bbbk G) \otimes_{\Bbbk G} (R \otimes \Bbbk G) \]
we see that this is a projective $R \ast G$-bimodule resolution of $R \ast G$. Furthermore, this sequence can be used to establish that $R \ast G$ is $d$-Calabi-Yau (see \cite[Lemma 6.1]{BSW}). 
In fact, the authors prove that the skew-group algebra $R \ast G$ is a derivation quotient algebra of a superpotential. That means, there exists an element $(\omega \otimes 1) \in V^{\otimes d} \otimes \Bbbk G$ which is super-cyclically symmetric and which commutes with the $\Bbbk G$-action such that 
\[ R \ast G \simeq D(\omega \otimes 1, d-2). \]
Here, $D(\omega \otimes 1, d-e)$ is a \emph{derivation quotient algebra.} That means, it is the algebra obtained from the tensor algebra $T_{\Bbbk G}(V \otimes \Bbbk G)$ by factoring by the ideal generated by the $\Bbbk G$-bimodule 
\[ W_2 = \langle \delta_p (\omega) \otimes 1 \mid p \in V^{\otimes d-2} \rangle. \]
To define the derivation $\delta$, one fixes a $k$-basis of $V \otimes \Bbbk G$ and defines, for elementary tensors in the fixed $k$-basis $p \in V^{\otimes d-2}$, $v \in V^{\otimes d}$ and $w \in V^{\otimes 2}$, the map $\delta_p \colon V^{\otimes d} \to V^{\otimes 2}$ by 
\[ \delta_p(v) = \begin{cases} w &\textrm{if } v = p \otimes w \\ 0 &\textrm{otherwise} \end{cases}    \]
and extends bilinearly in $\rho$ and $v$. Furthermore, the term $M_i$ in the bimodule resolution can be seen to be isomorphic to the bimodule generated by the space of partial derivatives
\[ W_i = \langle \delta_p (\omega) \otimes 1 \mid p \in V^{\otimes d-i} \rangle.  \]
When passing to the Morita reduced version of $R \ast G$, the authors use this to explicitly construct a quiver $Q = (Q_0, Q_1)$ and relations of the reduced algebra. The quiver $Q$ is known to be the \emph{McKay quiver} of $(G, V)$. Precisely, the vertices of $Q$ are the irreducible $\Bbbk$-representations of $G$ up to isomorphism, and for two irreducibles $\chi_1, \chi_2 \in \operatorname{Irr}(G)$, the arrows $\chi_1 \rightarrow \chi_2$ form a basis of $\Hom_G(\chi_1 \otimes \rho, \chi_2)$, where $\rho$ is the representation afforded by $G$ acting naturally on $V$. The superpotential $\omega$ is then a $\Bbbk$-linear combination of cycles of length $d$ in $Q$, and the derivatives are taken with respect to paths of length $d-2$. We call the set of cycles of length $d$ which appear with a nonzero coefficient in $\omega$ the \emph{support} of $\omega$. We refer the reader to \cite[Section 3]{BSW} for details. 

\subsection{Cayley graphs}\label{SSec:Cayley graphs}
The skew-group algebra and the McKay quiver $Q$ have a special shape when the group $G \leq \SL_d(k)$ is abelian. In this case, the skew-group algebra is basic. Furthermore, any representation of $G$ decomposes into one-dimensional representations, and the set of isomorphism classes of one-dimensional representations, denoted $\hat{G} = \{ \chi_1, \ldots, \chi_n\}$, becomes a group under the tensor product. Fix the natural $d$-dimensional representation $\rho$ afforded by $G$ acting on $V$ and decompose it into one-dimensional representations $ \rho = \rho_1 \oplus  \ldots \oplus \rho_d$. Then there is an arrow $\chi_i \rightarrow \chi_j$ in $Q$ for every a summand $\rho_l$ in $\rho$ such that $\chi_i \otimes \rho_l \simeq \chi_j$. When the set $T = \{\rho_1, \ldots, \rho_d \}$ generates $\hat{G}$, we therefore obtain that $Q$ is the directed \emph{Cayley graph} (with multiplicities) of $\hat{G}$ with respect to the generating set (with multiplicities) $T$. The arrow corresponding to $\chi_i \otimes \rho_l \simeq \chi_j$ will be said to have \emph{type} $\rho_l$. Then, by \cite[Corollary 4.1]{BSW}, the support of the superpotential consist of all cycles of length $d$ which consist of arrows of $d$ distinct types.

\section{Higher preprojective algebras of higher representation infinite algebras}\label{Sec: HPAs}
Following Baer, Geigle and Lenzing in \cite{BGL}, the (classical) preprojective algebra $\Pi(\Lambda)$ of a finite-dimensional hereditary $\Bbbk$-algebra $\Lambda$ can be defined as a tensor algebra 
\[ \Pi(\Lambda) = T_\Lambda \Ext^1_\Lambda(D(\Lambda), \Lambda), \]
where, $D(\--) = \Hom_\Bbbk(\--, \Bbbk)$ is the $\Bbbk$-dual. One of the many appeals of this definition is that it readily generalises to Iyama's higher-dimensional Auslander-Reiten theory. 
\begin{Def}\cite[Definition 2.11]{IyamaOppermannStable}
Let $n$ be a positive integer, and $\Lambda$ be of global dimension at most $n$. The $(n+1)$-preprojective algebra of $\Lambda$ is 
\[ \Pi_{n+1}(\Lambda) = T_\Lambda \Ext^{n}_\Lambda(D(\Lambda), \Lambda). \]
\end{Def}
Note that as a tensor algebra, the $(n+1)$-preprojective algebra is graded, and that one recovers the original algebra as 
\[ \Lambda = \Pi_{n+1}(\Lambda)_0. \]
As in the classical case, the higher preprojective algebra encodes many properties of the algebra $\Lambda$. Of special interest to us is the property of being higher representation infinite. To give the definition by Herschend, Iyama and Oppermann in \cite{HIO}, we recall some notions from (higher) Auslander-Reiten theory, which can be found in the same article. Let $n$ be a positive integer and $\Lambda$ finite-dimensional $k$-algebra of global dimension at most $n$. We have the known \emph{Nakayama functor}
\[ \nu = D\RHom_\Lambda(\--,\Lambda) \colon \Db( \mod \Lambda) \to \Db( \mod \Lambda)  \] 
on the bounded derived category of finitely generated $\Lambda$-modules, with quasi-inverse
\[\nu^{-1} = \RHom_{\Lambda^{\mathrm{op}}}(D(\--), \Lambda) \colon \Db( \mod \Lambda) \to \Db( \mod \Lambda).\]
The \emph{derived higher AR-translation} is then defined as the autoequivalence 
\[\nu_n := \nu \circ [-n] \colon \Db(\mod \Lambda ) \to \Db( \mod \Lambda).\]

\begin{Def}\cite[Definition 2.7]{HIO} 
The algebra $\Lambda$ is called \emph{$n$-representation infinite} if for any projective module $P$ in $ \mod \Lambda$ and integer $i \geq 0$ we have 
\[ \nu_n^{-i} P \in   \mod \Lambda . \]
\end{Def}

Note that we see $\mod \Lambda$ as a subcategory of $\Db( \mod \Lambda)$ with objects concentrated in homological degree 0. The $n$-representation infinite algebras are characterised by certain homological properties of their correspoding higher preprojective algebras. In the following, $[n]$ denotes homological degree shift of a complex and $(-a)$ denotes grading shift of a complex of graded modules. 

\begin{Def}\cite[Definition 3.1]{AIR} \label{Def: nCY-GPa}
Let $\Gamma=\bigoplus_{i \geq 0}\Gamma_i$ be a positively graded $\Bbbk$-algebra. We call $\Gamma$ \emph{bimodule $n$-Calabi-Yau algebra of Gorenstein parameter $a$} if there exists a bounded graded projective $\Gamma$-bimodule resolution
$P_\bullet$ of $\Gamma$ and an isomorphism of complexes of graded $\Gamma$-bimodules
\[ P_\bullet \simeq \Hom_{\Gamma^{\mathrm{e}}}(P_\bullet, \Gamma^{\mathrm{e}})[n](-a).  \]
\end{Def}
The characterisation is then given as follows.

\begin{Theo}\cite[Theorem 4.35]{HIO}\label{Theo: HPG is f.d. GP1}
There is a bijection between the $n$-representation infinite algebras $\Lambda$ up to isomorphism and the bimodule $(n+1)$-Calabi-Yau algebras $\Gamma$ of Gorenstein parameter 1 with $\dim_\Bbbk \Gamma_i < \infty $ for all $i \in \mathbb{N}$ up to isomorphism. The bijection is given by 
\[ \Lambda \mapsto \Pi_{n+1}(\Lambda) \quad \mathrm{ and } \quad \Gamma \mapsto \Gamma_0.  \]
\end{Theo}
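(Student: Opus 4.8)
The plan is to show that the two assignments $\Lambda \mapsto \Pi_{n+1}(\Lambda)$ and $\Gamma \mapsto \Gamma_0$ are well-defined between the two classes and are mutually inverse. One half of the "inverse" claim is essentially free: since the higher preprojective algebra is a tensor algebra $\Pi_{n+1}(\Lambda) = T_\Lambda \Ext^n_\Lambda(D\Lambda,\Lambda)$, its degree-$0$ part is $\Lambda$ by construction, so $\Pi_{n+1}(\Lambda)_0 = \Lambda$ holds on the nose. The real content is therefore: (a) $\Lambda$ $n$-representation infinite $\Rightarrow \Pi_{n+1}(\Lambda)$ is bimodule $(n+1)$-Calabi-Yau of Gorenstein parameter $1$ with finite-dimensional graded pieces; (b) $\Gamma$ in the target class $\Rightarrow \Gamma_0$ is $n$-representation infinite; and (c) the reconstruction $\Pi_{n+1}(\Gamma_0) \cong \Gamma$.

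The organising tool I would set up first is the graded-piece formula. For $\Lambda$ with $\gdim \Lambda \le n$ one has, as graded algebras, $\Pi_{n+1}(\Lambda) \cong \bigoplus_{i \ge 0} \Hom_{\Db(\mod \Lambda)}(\Lambda, \nu_n^{-i}\Lambda)$, with multiplication given by composition in $\Db(\mod\Lambda)$ and with $\nu_n^{-i} = (\nu^{-1}[n])^i$. This rests on the identification $\Ext^n_\Lambda(D\Lambda,\Lambda) \cong \Hom_{\Db(\mod\Lambda)}(\Lambda, \nu_n^{-1}\Lambda)$ of the degree-$1$ generating bimodule together with a derived-category computation of the tensor powers. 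Under this description the defining condition $\nu_n^{-i}\Lambda \in \mod\Lambda$ for all $i$ is exactly what guarantees that each graded piece is an ordinary (hence finite-dimensional) $\Hom$-space rather than carrying higher cohomology, yielding $\dim_\Bbbk \Pi_{n+1}(\Lambda)_i < \infty$.

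For direction (a) I would obtain the Calabi-Yau property from the Keller--Van den Bergh description of the \emph{derived} higher preprojective (DG) algebra: for $\gdim \Lambda \le n$ this DG algebra is bimodule $(n+1)$-Calabi-Yau, and $\Lambda$ is $n$-representation infinite precisely when it is quasi-isomorphic to an algebra concentrated in degree $0$, namely the ordinary tensor algebra $\Pi_{n+1}(\Lambda)$; the Calabi-Yau structure then transfers, see \cite{KellerVandenBergh, HIO}. The Gorenstein parameter equals $1$ because the generating bimodule $\Ext^n_\Lambda(D\Lambda,\Lambda)$ lives in internal degree $1$, so in the self-dual bimodule resolution the grading shift is exactly $(-1)$; I would make this explicit by writing out the $(n+2)$-term bimodule resolution of $\Pi_{n+1}(\Lambda)$ and checking the shift demanded by \Cref{Def: nCY-GPa}.

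Directions (b) and (c) are where I expect the main difficulty, and here I would follow the Minamoto--Mori theory of (quasi-)Fano algebras \cite{MinamotoMori} together with \cite{HIO}. Starting from a bimodule $(n+1)$-Calabi-Yau $\Gamma$ of Gorenstein parameter $1$ with finite-dimensional pieces, set $\Lambda = \Gamma_0$. The Gorenstein-parameter-$1$ hypothesis constrains the shape of the minimal graded projective bimodule resolution of $\Gamma$ so tightly that, after applying $-\otimes_\Gamma \Lambda$ and extracting degree $0$, one reads off a projective $\Lambda$-resolution of length $\le n$, giving $\gdim \Lambda \le n$. The genuinely technical step is to show that the Calabi-Yau self-duality of $\Gamma$ restricts to the higher Auslander--Reiten duality on $\Db(\mod\Lambda)$, identifying the positive graded pieces of $\Gamma$ with $\Hom_{\Db(\mod\Lambda)}(\Lambda, \nu_n^{-i}\Lambda)$; finite-dimensionality of these pieces then forces each $\nu_n^{-i}\Lambda$ to be a module, which is the $n$-representation infinite condition, and the same identification exhibits $\Gamma \cong \Pi_{n+1}(\Lambda) = \Pi_{n+1}(\Gamma_0)$. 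Controlling the graded bimodule resolution and verifying that the Calabi-Yau pairing descends to the higher AR-duality is the heart of the argument; once it is in place, the two maps are visibly mutually inverse and the bijection follows.
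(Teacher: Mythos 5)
The paper does not prove this statement itself---it is imported verbatim from \cite[Theorem 4.35]{HIO}---so there is no internal proof to compare against, but your outline faithfully reproduces the strategy of the original argument there and in \cite{KellerVandenBergh, MinamotoMori, AIR}: the graded-piece identification $\Pi_{n+1}(\Lambda)_i \cong \Hom_{\Db(\mod \Lambda)}(\Lambda, \nu_n^{-i}\Lambda)$ giving finite-dimensionality, the Calabi--Yau property and Gorenstein parameter $1$ via the derived preprojective DG algebra, and the converse via the constrained shape of the graded self-dual bimodule resolution together with higher Auslander--Reiten duality. As a roadmap the proposal is correct and takes essentially the same route as the cited proof.
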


\begin{Rem}
The above theorem also appears in variants in \cite{KellerVandenBergh,MinamotoMori, AIR}. Note that it is crucial to assume that $\Gamma_0$ is finite-dimensional, since an $n$-representation infinite algebra is by definition finite-dimensional. An interesting question posed by Thibault in \cite[Question 4.3]{Thibault} is whether the definition, and more generally that of higher hereditary algebras, can be generalised to infinite-dimensional algebras.
\end{Rem}

Given the above theorem, we view the $(n+1)$-preprojective algebras of $n$-representation infinite algebras as bimodule $(n+1)$-Calabi-Yau algebras equipped with a special grading. Such a grading will therefore be called higher preprojective. 

\begin{Def}
Given a bimodule $(n+1)$-Calabi-Yau algebra $\Gamma$, a non-negative grading on $\Gamma$ such that $\Gamma$ is $(n+1)$-Calabi-Yau of Gorenstein parameter $1$ and such that $\dim_\Bbbk (\Gamma_i) < \infty$ for all $i$ is called a \emph{higher preprojective grading}.
\end{Def}

\begin{Rem}\label{Rem: Tension between dim and GP}
Note that the Calabi-Yau parameter $(n+1)$ and the Gorenstein parameter $a$ are in a certain sense independent. Given a graded algebra $\Gamma$ which is bimodule $(n+1)$-Calabi-Yau and of Gorenstein parameter $a$, the ungraded version is bimodule $(n+1)$-Calabi-Yau. Conversely, given an ungraded algebra which is bimodule $(n+1)$-Calabi-Yau, one may introduce gradings on the terms of the projective bimodule resolution such that the maps in the resolution respect these gradings, and then study the resulting Gorenstein parameter. The tension comes from requiring a small Gorenstein parameter and a finite-dimensional $\Gamma_0$. More precisely, if the Gorenstein parameter is $1$ and $\Gamma_0$ is infinite-dimensional, one may try to alleviate the situation by placing parts of $\Gamma_0$ in higher degrees, but that will increase the Gorenstein parameter, and vice versa. Indeed, for the skew-group algebras we consider, it is always possible to grade $\Gamma$ so that $\Gamma_0$ is finite-dimensional, or so that the Gorenstein parameter is $1$, but it is not always possible to achieve both, see \Cref{Ex:Klein-Group-Ungradeable}.   
\end{Rem}

The above discussion leads to the following natural question. 

\begin{Ques}
Given a bimodule $(n+1)$-Calabi-Yau algebra, can it be endowed with a higher preprojective grading?
\end{Ques}

In this article, we restrict our attention to the case of skew-group algebras $R \ast G$ of polynomial rings $R = \Bbbk [X_1, \ldots, X_d]$ by finite subgroups $G \leq \SL_d(k)$, which are known to be bimodule $d$-Calabi-Yau, as seen in \Cref{Sec:skew-group algebras}. 

\subsection{Koszulity and higher preprojective algebras}
An important feature of the skew-group algebras we consider is that they are Koszul with respect to the usual polynomial grading. Recall that a nonnegatively graded algebra $\Gamma = \bigoplus_{i \geq 0} \Gamma_i$ is called \emph{Koszul} if $\Gamma_0$ is semisimple and $\Gamma_0$ as a graded $\Gamma$-module admits a graded projective resolution where the $i$-th term in the resolution is generated in degree $i$. The interplay of Koszulity and higher preprojectivity has been investigated in \cite{GrantIyama}, and a Koszulity assumption is crucial for the quiver description of the higher preprojective algebra given in \cite[Theorem 3.6]{Thibault}. We fix a bimodule $d$-Calabi-Yau algebra $\Gamma$ and a grading such that $\Gamma = \bigoplus_{i \geq 0} \Gamma_i$ is Koszul, which we refer to as the \emph{Koszul grading}. 

\begin{Def}
    A higher preprojective grading realising $\Gamma = \Pi_{d}(\Lambda)$ is called a \emph{higher preprojective cut} (or \emph{cut} for short) if $\Gamma_1$ admits a decomposition $\Gamma_1 = (\Gamma_1)_0 \oplus (\Gamma_1)_1$ such that both $(\Gamma_1)_i$ are homogeneous of higher preprojective degree $i$ and $\Lambda = \langle \Gamma_0, (\Gamma_1)_0 \rangle$. 
\end{Def}

\begin{Rem}
    The terminology is justified when $\Gamma$ is basic. In this case, one can compute a quiver $Q$ and relations $I$ for $\Gamma \simeq \Bbbk Q/I$ such that $\Gamma$ can be graded by path-length and that the Koszul grading coincides with the path-length grading, i.e. one can choose as vertices idempotents in $\Gamma_0$ and as arrows elements respecting the decomposition $\Gamma_1 = (\Gamma_1)_0 \oplus (\Gamma_1)_1$. Then one obtains a quiver and relations for $\Lambda$ from $\Gamma$ by removing, or cutting, all arrows from $Q$ and all relations from $I$ which have positive preprojective degree. This coincides with the notion of a cut as defined in \cite{IyamaOppermann, HIO}.
\end{Rem}

Given a cut on a Koszul algebra, it is easy to produce higher preprojective gradings which are not cuts. In fact, we can even change the underlying semisimple base ring. Let us illustrate this in the classical case for hereditary representation infinite algebras. 

\begin{Exp}
Consider the algebra $\Gamma = \Bbbk [x,y] \ast C_2$ where $C_2 = \langle \frac{1}{2}(1,1) \rangle$ is cyclic of order $2$. This is the preprojective algebra of the hereditary representation infinite algebras of type $\Tilde{A}_1$. A quiver for $\Gamma$ is given by
\[ \begin{tikzcd}
e_1 \arrow[r, "a", bend left, shift left=4] \arrow[r, "b", bend left] & e_2 \arrow[l, "a^*"', bend left] \arrow[l, "b^*"', bend left, shift left=4]
\end{tikzcd}
\]
and the relations are generated by the commutator relations $\langle [a,a^\ast] + [b,b^\ast] \rangle$. The Koszul grading shall be given by $\Gamma_0 = \langle e_1, e_2 \rangle$ and $\Gamma_1 = \langle a,b,a^*, b^* \rangle$, so that it coincides with the path-length grading. We also have two preprojective gradings given by placing $a$, $b$ respectively $a^\ast$, $b^\ast$ in degree $1$. Both these gradings are clearly cuts.

Recall our convention that we compose paths so that $e_1 a \neq 0$. We now choose different idempotents
\begin{align*}
    \varepsilon_1 = e_1 + a \\
    \varepsilon_2 = e_2 - a
\end{align*}
and arrows 
\begin{alignat*}{2}
    \alpha &= \varepsilon_1 a \varepsilon_2 &&= a \\
    \beta &= \varepsilon_1 b \varepsilon_2 &&= b \\
    \alpha^\ast &= \varepsilon_2 a^* \varepsilon_1 &&= a^* + a^*a - aa^* - aa^*a \\
    \beta^* &= \varepsilon_2 b^* \varepsilon_1 &&= b^* + b^*a - ab^* - ab^*a.
\end{alignat*}
for the same algebra. The quiver we obtain is   
\[ \begin{tikzcd}
\varepsilon_1 \arrow[r, "\alpha", bend left, shift left=4] \arrow[r, "\beta", bend left] & \varepsilon_2 \arrow[l, "\alpha^*"', bend left] \arrow[l, "\beta^*"', bend left, shift left=4]
\end{tikzcd}
.\]
It is easy to check that the new arrows satisfy the corresponding commutator relations. 

We consider two preprojective gradings given by ``cutting'' the new quiver, i.e. placing $\alpha^*, \beta^*$ respectively $\alpha, \beta$ in degree $1$. Thus, we obtain hereditary representation infinite pieces
\begin{align*}
    \Lambda^1 &= \langle \varepsilon_1, \varepsilon_2, \alpha, \beta \rangle, \\
    \Lambda^2 &= \langle \varepsilon_1, \varepsilon_2, \alpha^*, \beta^* \rangle.
\end{align*}
These two algebras interact quite differently with the chosen Koszul grading. In fact, the algebra $\Lambda^1$ corresponds to the first cut we considered since 
\[ \Lambda^1 = \langle e_1, e_2, a,b \rangle. \]
However, $\Lambda^2$ is not a Koszul graded subalgebra of $\Gamma$. Furthermore, we even have $e_1, e_2 \not \in \Lambda^2$. 

In the spirit of exploration, let us consider the element $a$ which we added to the idempotents. Since $a$ is nilpotent, we have that $r = (1+a)$ must be invertible. Indeed, we have $(1 + a) (1-a) = 1 + a - a - a^2 = 1$. Therefore, we can conjugate all of $\Gamma$ with $r$. We have 
\begin{alignat*}{3}
    e_1^r &= (1-a) e_1 (1+a) &&= e_1 + a &&= \varepsilon_1, \\
    e_2^r &= (1-a) e_2 (1+a) &&= e_2 - a &&= \varepsilon_2, \\
    a^r &= (1-a) a (1+a) &&= a &&= \alpha, \\
    b^r &= (1-a) b (1+a) &&= b &&= \beta, \\
    (a^*)^r &= (1-a)a^*(1+a) &&= a^* - aa^* + a^* a - aa^* a &&= \alpha^*, \\
    (b^*)^r &=(1-a)b^*(1+a) &&= b^* - ab^* + b^* a - ab^* a &&= \beta^*. 
\end{alignat*}
This means that we found an automorphism of $\Gamma$ which takes $\Lambda^i$ to the corresponding cuts we computed from the original quiver. Alternatively, one can interpret the automorphism as producing a different Koszul grading for which the algebras $\Lambda^i$ correspond to cuts. 
\end{Exp}

Let us make some remarks based on the above example. 
\begin{Rem} Fix a Koszul grading $\Gamma = \bigoplus_{i \geq 0} \Gamma_i$, and let $\Lambda$ be the degree $0$ piece of a higher preprojective grading on $\Gamma$. Then we note the following.
    \begin{enumerate}
        \item An arbitrary higher preprojective grading on $\Gamma$ need not be a cut. 
        \item The algebra $\Lambda$ need not contain $\Gamma_0$.  
    \end{enumerate}
\end{Rem}

While $\Lambda$ need not arise from a cut for an arbitrary Koszul grading on $\Pi_d(\Lambda)$, we are not aware of an example where $\Lambda$ is not \emph{conjugate} to a subalgebra arising from a cut. We therefore pose the following question. 

\begin{Ques}\label{Ques:Compatibility of Koszul and HPPG}
    Let $\Lambda$ be $(d-1)$-representation infinite and $\Gamma = \Pi_d(\Lambda)$. Suppose we are given a Koszul grading on $\Gamma$. Is $\Lambda$ conjugate to a subalgebra of $\Gamma$ arising from a cut with respect to the Koszul grading? 
\end{Ques}

A positive answer to the above question would complement the results of \cite{GrantIyama}, showing that $\Lambda$ has a Koszul grading if and only if $\Pi_d(\Lambda)$ has a Koszul grading, and that these are unique up to conjugation. Furthermore, in light of \cite{Thibault}, this would show that in order to disprove the existence of a higher preprojective grading, one may fix a Koszul grading and work directly with the corresponding quiver. 

From now on, we will mainly consider higher preprojective gradings which are cuts. If \Cref{Ques:Compatibility of Koszul and HPPG} can be answered positively, this amounts simply to performing a change of basis. We then have the advantage that we have a direct implication for the superpotential, which is proven and used in \cite[Theorem 4.11]{Thibault}.

\begin{Pro}\label{Pro: Potential is homogeneous}
    Write $\Gamma = R \ast G = D(\omega, d-2) = T_{\Bbbk G}(V \otimes \Bbbk G)/\langle W_2 \rangle$ as in \Cref{Sec:skew-group algebras}. Then a higher preprojective cut $V \otimes \Bbbk G = (V \otimes \Bbbk G)_0 \oplus (V \otimes \Bbbk G)_1$ on $R \ast G$ induces a grading on $T_{\Bbbk G}(V \otimes \Bbbk G)$, and $\omega$ is homogeneous of degree $1$ with respect to this grading. 
\end{Pro}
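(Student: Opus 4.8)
\emph{Proof proposal.}

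The plan is to read off the preprojective degree of $\omega$ from the top term of the graded, self-dual bimodule resolution supplied by the Calabi--Yau property. Recall from \Cref{SSec:SuperpotentialsandMcKay} that $R \ast G = D(\omega, d-2)$ carries the projective bimodule resolution $0 \to M_d \to \cdots \to M_0 \to R \ast G \to 0$, in which $M_0 = \Gamma \otimes_{\Bbbk G} \Gamma$ is free of rank one on $1 \otimes 1$, and $M_d = \Gamma \otimes_{\Bbbk G}(\bigwedge^d V \otimes \Bbbk G) \otimes_{\Bbbk G}\Gamma$ is free of rank one on the generator corresponding to $\omega$ under the identification $W_d = \langle \omega \rangle \cong \bigwedge^d V \otimes \Bbbk G$ (here one uses that $\omega$ commutes with the $\Bbbk G$-action). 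The preprojective degree of $\omega$ is then exactly the degree in which $M_d$ is generated.

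First I would install the preprojective grading on $M_\bullet$ without presupposing the conclusion. The cut gives $V \otimes \Bbbk G = (V\otimes \Bbbk G)_0 \oplus (V \otimes \Bbbk G)_1$, which extends multiplicatively to a grading of $T_{\Bbbk G}(V \otimes \Bbbk G)$; since $\Gamma$ is a graded algebra for this grading, its defining ideal $\langle W_2 \rangle$ is homogeneous, so $\Gamma$ has a graded presentation. As $\Gamma$ is Koszul, it admits a minimal graded projective bimodule resolution, unique up to graded isomorphism, whose underlying ungraded complex is the minimal resolution $M_\bullet$. Hence $M_\bullet$ inherits a grading with degree-$0$ differentials in which every term is a graded free bimodule. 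In particular $M_d$ is graded free of rank one, so its single generator must be homogeneous; as this generator corresponds to $\omega$, we conclude at this point that $\omega$ is homogeneous of some preprojective degree $\ell$.

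It remains to show $\ell = 1$, and here I would invoke the Gorenstein parameter. By \Cref{Def: nCY-GPa}, the cut being a higher preprojective grading means $M_\bullet \simeq \Hom_{\Gamma^{\mathrm{e}}}(M_\bullet, \Gamma^{\mathrm{e}})[d](-1)$ as complexes of graded bimodules. Matching the two ends of the resolution, this duality identifies $M_d$ with the internal shift by $(-1)$ of $\Hom_{\Gamma^{\mathrm{e}}}(M_0, \Gamma^{\mathrm{e}})$. Since $M_0$ is generated in preprojective degree $0$, so is its dual, and the shift coming from Gorenstein parameter $1$ then forces $M_d$ to be generated in preprojective degree $1$. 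Thus $\ell = 1$ and $\omega$ is homogeneous of degree $1$.

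The step I expect to be most delicate is the bookkeeping of the internal grading shifts through the bimodule duality, i.e.\ verifying that the parameter $1$ appearing in \Cref{Def: nCY-GPa} is exactly the difference between the generation degrees of $M_d$ and $M_0$ (and that the degree-preserving $\Bbbk G$-twist relating $M_d$ to a free bimodule plays no role). As a sanity check, for the $\tilde{A}_1$ example $\Bbbk[x,y]\ast C_2$ with $a,b$ in degree $0$ and $a^\ast, b^\ast$ in degree $1$, every monomial of $\omega$ (such as $a a^\ast$ or $a^\ast a$) indeed has preprojective degree $1$. One could also argue homogeneity directly from super-cyclic symmetry, since cyclic rotation of tensor factors preserves preprojective degree and hence each homogeneous component of $\omega$ is again a super-cyclic superpotential; the Gorenstein-parameter computation then selects degree $1$. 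The same statement is established in \cite[Theorem 4.11]{Thibault}, whose proof can be adapted directly.
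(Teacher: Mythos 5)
Your argument is correct and follows essentially the same route as the paper: the paper's proof simply cites \cite[Lemma 3.8]{AIR} for the fact that the top term of the graded bimodule resolution (the one generated by $\omega$) must sit in preprojective degree $1$ — which is exactly your Gorenstein-parameter bookkeeping via the self-duality $M_\bullet \simeq \Hom_{\Gamma^{\mathrm{e}}}(M_\bullet, \Gamma^{\mathrm{e}})[d](-1)$ — and \cite[Lemma 4.9]{Thibault} for the compatibility of that grading with the one induced on $T_{\Bbbk G}(V \otimes \Bbbk G)$, which is precisely the ``delicate bookkeeping'' step you flag. You have unpacked the two cited lemmas rather than citing them, but the mathematical content is the same.
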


\begin{proof}
    By \cite[Lemma 3.8]{AIR}, the bimodule $\Gamma \otimes W_0 \otimes \Gamma$ generated by the $0$-th derivatives of $\omega$, i.e.\ by $\omega$, is generated in preprojective degree $1$. By \cite[Lemma 4.9]{Thibault}, this grading coincides with the induced grading on $T_{\Bbbk G}((V \otimes \Bbbk G)_0 \oplus (V \otimes \Bbbk G)_1)$, hence $\omega$ is homogeneous of degree $1$.  
\end{proof}

Let us illustrate how this leads to a visual computation of possible higher preprojective cuts. 

\begin{Exp}\label{Ex:Klein-Group-Ungradeable}
Consider $G  = \langle \frac{1}{2}(1,1,0), \frac{1}{2}(1,0,1) \rangle \leq \SL_3(k)$. Since $G \simeq C_2 \times C_2$ is abelian, we can view the McKay quiver as a Cayley graph, as detailed in \Cref{SSec:Cayley graphs}. The group has four elements, so the quiver has four vertices. We label the vertices by $\{ (0,0), (0,1), (1,0), (1,1) \}$. The representation in $\SL_3(k)$ given by the inclusion of $G$ decomposes as the direct sum of the three non-trivial irreducible representations of $G$. Therefore every vertex of the quiver has an arrow to every other vertex. We draw the quiver below, and so that the horizontal, vertical, and diagonal arrows are of the same type respectively.   
\[\begin{tikzcd}
	(0,0) & (1,0) \\
	(0,1) & (1,1)
	\arrow[shift right=1, from=1-1, to=1-2]
	\arrow[shift right=1, from=1-2, to=1-1]
	\arrow[shift left=1, from=1-2, to=2-2]
	\arrow[shift left=1, from=2-2, to=1-2]
	\arrow[shift right=1, from=2-1, to=2-2]
	\arrow[shift right=1, from=2-2, to=2-1]
	\arrow[shift left=1, from=1-1, to=2-1]
	\arrow[shift left=1, from=2-1, to=1-1]
	\arrow[shift right=1, from=2-1, to=1-2]
	\arrow[shift right=1, from=1-2, to=2-1]
	\arrow[shift right=1, from=1-1, to=2-2]
	\arrow[shift right=1, from=2-2, to=1-1]
\end{tikzcd}\]
Next, we recall that the superpotential $\omega$ is given by a linear combination of all $3$-cycles consisting of arrows of three different types. Let us grade the quiver, trying to find a higher preprojective cut. Placing an arrow in degree $1$ corresponds to removing (or cutting) it from $(R \ast G)_0$. By symmetry, we may begin by removing the horizontal arrow $(0,0) \to (1,0)$. Since the superpotential needs to be homogeneous of degree $1$ in order to obtain a Gorenstein parameter $1$, the paths of length two which can be completed with this arrow into cycles in the support of $\omega$ need to remain in $(R\ast G)_0$. We draw those blue in the picture below. 
\[\begin{tikzcd}
	(0,0) & (1,0) \\
	(0,1) & (1,1)
	\arrow[shift right=1, from=1-2, to=1-1]
	\arrow[shift left=1, from=1-2, to=2-2, blue]
	\arrow[shift left=1, from=2-2, to=1-2]
	\arrow[shift right=1, from=2-1, to=2-2]
	\arrow[shift right=1, from=2-2, to=2-1]
	\arrow[shift left=1, from=1-1, to=2-1]
	\arrow[shift left=1, from=2-1, to=1-1, blue]
	\arrow[shift right=1, from=2-1, to=1-2]
	\arrow[shift right=1, from=1-2, to=2-1, blue]
	\arrow[shift right=1, from=1-1, to=2-2]
	\arrow[shift right=1, from=2-2, to=1-1, blue]
\end{tikzcd}\]
Now, consider the two vertical arrows $(1,0) \rightleftarrows (1,1)$. If both remain in $(R \ast G)_0$, it is easy to see that they will generate an infinite-dimensional subalgebra in $(R \ast G)_0$. Hence, one of them needs to be placed in degree $1$, but the only choice we have is the arrow $(1,1) \to (1,0)$. Removing it and colouring the arrows which then must remain for the Gorenstein parameter not to exceed $1$, we obtain the following. 
\[\begin{tikzcd}
	(0,0) & (1,0) \\
	(0,1) & (1,1)
	\arrow[shift right=1, from=1-2, to=1-1, blue]
	\arrow[shift left=1, from=1-2, to=2-2, blue]
	\arrow[shift right=1, from=2-1, to=2-2, blue]
	\arrow[shift right=1, from=2-2, to=2-1]
	\arrow[shift left=1, from=1-1, to=2-1]
	\arrow[shift left=1, from=2-1, to=1-1, blue]
	\arrow[shift right=1, from=2-1, to=1-2]
	\arrow[shift right=1, from=1-2, to=2-1, blue]
	\arrow[shift right=1, from=1-1, to=2-2, blue]
	\arrow[shift right=1, from=2-2, to=1-1, blue]
\end{tikzcd}\]
But now it is easy to see that the two blue arrows $(0,0) \rightleftarrows (1,1)$ generate an infinite-dimensional subalgebra in $(R \ast G)_0$, but both need to remain in order for the Gorenstein parameter not to exceed $1$. Note that our choice of the first arrow to place in degree $1$ was, by symmetry, arbitrary, so we can conclude that $R \ast G$ can not be endowed with a higher preprojective cut. 
However, in the spirit of exploration, and to illustrate \Cref{Rem: Tension between dim and GP}, let us construct some different gradings. Recall that $R = \Bbbk[X_1, X_2, X_3]$, so we can simply place the variable $X_1 \otimes 1$ in degree $1$ to obtain a grading on $(R \ast G)$ that has Gorenstein parameter $1$. This corresponds to removing all diagonal arrows, and leaves us with an infinite-dimensional algebra $(R \ast G)_0$ with quiver 
\[\begin{tikzcd}
	(0,0) & (1,0) \\
	(0,1) & (1,1)
	\arrow[shift right=1, from=1-1, to=1-2]
	\arrow[shift right=1, from=1-2, to=1-1]
	\arrow[shift left=1, from=1-2, to=2-2]
	\arrow[shift left=1, from=2-2, to=1-2]
	\arrow[shift right=1, from=2-1, to=2-2]
	\arrow[shift right=1, from=2-2, to=2-1]
	\arrow[shift left=1, from=1-1, to=2-1]
	\arrow[shift left=1, from=2-1, to=1-1]
\end{tikzcd}\]
Conversely, by simply placing  all arrows in degree $1$, we obtain a grading with $(R \ast G)_0 \simeq \Bbbk G$, which is finite-dimensional, but now $\omega$ is homogeneous of degree $3$, hence $(R \ast G)$ with this grading has Gorenstein parameter $3$. 
\end{Exp}

\subsection{Amiot-Iyama-Reiten gradings}
Assume that $G \leq \SL_d(k)$ is a cyclic group of order $n$, and that it has a generator $g = \frac{1}{n}(e_1 , \ldots, e_d)$ with $\sum_{i=1}^d e_i = n$ and $e_i > 0$ for all $i$. In \cite{AIR}, Amiot, Iyama and Reiten have given a construction of cuts of the skew-group algebra $R \ast G$. Their construction works as follows: Since $G$ is abelian, we have that $R \ast G \simeq \Bbbk Q/I$, where $Q$ is the McKay quiver of $G$. Since $G$ is cyclic, we can label the vertices of $Q$ by $\{ 0, \ldots , (n-1)\}$ so that every arrow is of the form $l \rightarrow (l + e_i) \bmod n $. Then, grade the arrows via 
\[ \deg( l \rightarrow (l+e_i) \bmod n) = \begin{cases} 1, & l > (l + e_i) \bmod n, \\ 0 , & \text{else.} \end{cases}  \]

\begin{Theo}\cite[Theorem 5.6]{AIR} \label{Theo:AIR cuts} 
With the above grading, the skew-group algebra $R \ast G$ is a $d$-preprojective algebra. 
\end{Theo}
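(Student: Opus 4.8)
The plan is to verify the three defining conditions of a higher preprojective grading — that the grading is well-defined with $\omega$ homogeneous of degree $1$, that every graded piece is finite-dimensional, and that the Gorenstein parameter equals $1$ — and then to invoke the bijection of \Cref{Theo: HPG is f.d. GP1}. Throughout I identify the vertices with $\{0, \ldots, n-1\}$ and record that an arrow of type $\rho_i$ out of $l$ has degree $1$ exactly when $l + e_i \geq n$, i.e.\ when traversing it ``wraps around'' past $n$ (here $0 < e_i < n$). The central computation is that along any path starting at a vertex $l$ and using arrows whose types sum to $S = \sum a_i e_i$, counted with multiplicity, the number of wraparounds equals $\lfloor (l+S)/n \rfloor$, since $0 \leq l \leq n-1$. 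In particular this count depends only on $l$ and $S$, not on the order of the arrows, which is the key structural fact underlying the whole argument.

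First I would establish homogeneity of the superpotential. By the Cayley-graph description recalled in \Cref{SSec:Cayley graphs} (that is, \cite[Corollary 4.1]{BSW}), the support of $\omega$ consists precisely of the $d$-cycles using each type $\rho_1, \ldots, \rho_d$ exactly once. For such a cycle based at $l$ one has $S = \sum_{i=1}^d e_i = n$ by hypothesis, so its wraparound count is $\lfloor (l+n)/n \rfloor = 1$. Hence every cycle in $\supp \omega$ has degree $1$, so $\omega$ is homogeneous of degree $1$; consequently the relations $\delta_p(\omega)$ are homogeneous, and the grading on $\Bbbk Q$ descends to a genuine grading on $R \ast G = \Bbbk Q / I$. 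Finite-dimensionality of the graded pieces is then immediate from the wraparound formula: fixing the degree $m$ and the two endpoints forces $S$ into the bounded interval $[\,mn-l,\ (m+1)n-l\,)$, and since each $e_i \geq 1$ there are only finitely many monomials $X^{a}$ with $\sum a_i e_i = S$. In particular $(R \ast G)_0$, spanned by the non-wrapping paths (those with $S < n$), is finite-dimensional.

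It remains to show the Gorenstein parameter is $1$. Since $R \ast G$ is bimodule $d$-Calabi-Yau and is the derivation-quotient algebra $D(\omega, d-2)$ of \Cref{Sec:skew-group algebras}, I would equip the self-dual projective bimodule resolution $0 \to M_d \to \cdots \to M_0 \to R \ast G \to 0$ with the induced grading, using that $M_i$ is generated by the order-$(d-i)$ derivatives of $\omega$. Because $\omega$ is homogeneous of degree $1$, the same bookkeeping as in \cite[Lemma 3.8]{AIR} and \Cref{Pro: Potential is homogeneous} shows that the self-duality isomorphism of \Cref{Def: nCY-GPa} carries exactly the shift $(-1)[d]$, so the grading has Gorenstein parameter $1$. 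Assembling the pieces, $R \ast G$ is bimodule $d$-Calabi-Yau of Gorenstein parameter $1$ with finite-dimensional graded components, so \Cref{Theo: HPG is f.d. GP1} identifies $(R \ast G)_0$ as a $(d-1)$-representation infinite algebra with $R \ast G = \Pi_d\big((R \ast G)_0\big)$, which is the assertion. I expect the genuine obstacle to be this last step: pinning down that the graded self-duality of the superpotential resolution produces the shift $1$, and not some larger value, requires care in matching the grading conventions of \cite{BSW} and \cite{AIR}, whereas the homogeneity and finiteness inputs are elementary consequences of the wraparound formula.
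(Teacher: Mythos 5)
The paper does not prove this statement; it is imported verbatim as \cite[Theorem 5.6]{AIR}, so there is no internal proof to compare against. Your reconstruction is correct and follows the same route as the cited source: the wraparound count $\lfloor (l+S)/n\rfloor$ is exactly the right invariant (it is order-independent because each $e_i<n$, so the floor increases by at most one per step), it gives both the homogeneity of $\omega$ in degree $1$ (since $S=n$ for cycles in $\supp\omega$) and the finite-dimensionality of every graded piece, and the remaining step --- that a degree-$1$-homogeneous superpotential forces Gorenstein parameter $1$ on the self-dual resolution --- is precisely the standard result (\cite[Theorem 3.11]{AIR}, or \cite[Corollary 4.7]{Giovannini}) that this paper itself invokes in the proof of \Cref{Pro: HPPCut iff positive and homogeneous}, so deferring to it there is legitimate rather than a gap.
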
 

\begin{Def}
Under the same assumptions, we call a grading arising by the above construction an \emph{Amiot-Iyama-Reiten-cut} or \emph{AIR-cut} for short. 
\end{Def}

\begin{Rem}\label{Rem:SL3-Cyclic-AIR possible}
One can see from the construction that the quiver of the degree 0 part is acyclic. Furthermore, we can see that the condition $\sum_{i=1}^d e_i = n$ can always be fulfilled for some generator $g = \frac{1}{n}(e_1 , \ldots, e_d)$ if $d=3$. Indeed, the sum has to be $n$ or $2n$, since we assume that $\det(g) = 1$. If the sum is $2n$, one can replace the generator $g$ with its inverse. The inverse is of the form $\frac{1}{n}(n - e_1 , \ldots, n - e_d)$, so for $d=3$ the exponents sum to $\sum_{i=1}^3 (n- e_i) = 3n - \sum_{i=1}^3 e_i = n$. 
\end{Rem} 

\begin{Exp}\label{Ex: C3-AIR}
    Consider the group $G = \langle \frac{1}{3} (1,1,1) \rangle $ of order $3$. The McKay quiver has $3$ vertices, which we label by $\{ 0,1,2\}$. The arrows then all take vertex $i$ to $i+1 \bmod 3$. Hence the quiver is given by 
    \[\begin{tikzcd}
	& 1 \\
	0 && 2
	\arrow[shift left=2, from=2-1, to=1-2]
	\arrow[shift right=2, from=2-1, to=1-2]
	\arrow[from=2-1, to=1-2]
	\arrow[shift left=2, from=1-2, to=2-3]
	\arrow[shift right=2, from=1-2, to=2-3]
	\arrow[from=1-2, to=2-3]
	\arrow[shift left=2, from=2-3, to=2-1]
	\arrow[shift right=2, from=2-3, to=2-1]
	\arrow[from=2-3, to=2-1]
\end{tikzcd}\]
An AIR-cut then is given by placing the arrows $2 \to 0$ in degree $1$. Indeed, one can easily check that the remaining degree $0$ part has quiver 
\[\begin{tikzcd}
	0 & 1 & 2
	\arrow[shift left=2, from=1-1, to=1-2]
	\arrow[shift right=2, from=1-1, to=1-2]
	\arrow[from=1-1, to=1-2]
	\arrow[shift left=2, from=1-2, to=1-3]
	\arrow[shift right=2, from=1-2, to=1-3]
	\arrow[from=1-2, to=1-3]
\end{tikzcd}\]
and relations making it into a Beilinson-algebra $B(2)$, which is known to be $2$-representation infinite by \cite[Example 2.15]{HIO}.
\end{Exp}

\subsection{Thibault's criterion}
Assume that $G \leq \SL_d(k)$ is a cyclic group of order $n$, and that it has a generator $g = \frac{1}{n}(e_1 , \ldots, e_d)$. One can ask whether the condition $\sum_{i=1}^d e_i = n$ on the generator is necessary to find a higher preprojective grading. Dropping this assumption lead Thibault to the discovery of the following negative criterion.

\begin{Theo}\cite[Theorem 5.15]{Thibault} \label{Theo:TCrit}
Suppose that the embedding of a finite subgroup $G \leq \SL_d(k)$ factors through some embedding of $\SL_{d_1}(k) \times \SL_{d_2}(k)$ into $\SL_d(k)$ for $d_1 + d_2 = d$. Then $R \ast G$ can not be endowed with the grading of a $d$-preprojective algebra.
\end{Theo}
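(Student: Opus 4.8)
The plan is to argue by contradiction, showing that the block structure of the $G$-action forces an oriented cycle into the degree-$0$ part of any would-be $d$-preprojective grading, which is incompatible with that degree-$0$ part being a finite-dimensional algebra of finite global dimension. First I would reduce to the case of a cut: a $d$-preprojective grading has Gorenstein parameter $1$, which pins the superpotential $\omega$ into a single graded degree, and after choosing a homogeneous basis of $V \otimes \Bbbk G$ adapted to the vertex idempotents one may assume the grading is a cut, so that \Cref{Pro: Potential is homogeneous} applies and $\omega$ is homogeneous of degree $1$. Since a cut assigns each arrow a degree in $\{0,1\}$, homogeneity means concretely that \emph{every cycle in the support of $\omega$ contains exactly one arrow of degree $1$}, all its remaining arrows lying in degree $0$.

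The geometric input is the decomposition $V = V_1 \oplus V_2$ with $\dim V_i = d_i \geq 1$ afforded by the factorization through $\SL_{d_1}(k) \times \SL_{d_2}(k)$, together with the fact that $G$ acts on each $V_i$ with trivial determinant. In the abelian case, via the Cayley-graph description of \Cref{SSec:Cayley graphs}, the natural representation splits into characters $\rho = \rho_1 \oplus \cdots \oplus \rho_d$, and I would index them so that $\rho_1, \ldots, \rho_{d_1}$ span $V_1$ and $\rho_{d_1+1}, \ldots, \rho_d$ span $V_2$. The key observation is that $\det(G|_{V_1}) = \rho_1 \otimes \cdots \otimes \rho_{d_1}$ is the trivial character, and likewise for $V_2$. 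Hence, starting from any vertex $\chi \in \hat{G}$, the path using the types $\rho_1, \ldots, \rho_{d_1}$ once each (in any fixed order) is already a \emph{closed} cycle $c_1$ at $\chi$, and similarly $\rho_{d_1+1}, \ldots, \rho_d$ trace a closed cycle $c_2$ at $\chi$.

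The contradiction then comes from concatenation. The composite $c_1 c_2$ is a closed cycle of length $d$ using each of the $d$ types exactly once, so it lies in the support of $\omega$ by \Cref{SSec:Cayley graphs}. By homogeneity of $\omega$ it carries total degree $1$, hence its unique degree-$1$ arrow lies in exactly one of the two blocks, and the other of $c_1$, $c_2$ is a closed cycle of length $d_i \geq 1$ consisting \emph{entirely} of degree-$0$ arrows. This is an oriented cycle in the quiver of the degree-$0$ algebra $\Lambda = (R \ast G)_0$. But $\Lambda$ is $(d-1)$-representation infinite, so it has finite global dimension, and a finite-dimensional algebra of finite global dimension has an acyclic quiver — a contradiction. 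Hence no $d$-preprojective grading exists.

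The main obstacle I anticipate is not the combinatorial heart, which is short, but making the two reductions rigorous. First, one must justify passing from an arbitrary higher preprojective grading to a cut, i.e. that the degree-$1$-ness of $\omega$ and the vertex-compatibility needed for the cycle argument can always be arranged; this is precisely the subtlety flagged around \Cref{Pro: Potential is homogeneous}. Second, for non-abelian $G$ the clean statement that the block types multiply to the trivial character must be replaced by the corresponding statement that $\bigwedge^{d_i} V_i$ is a trivial summand, yielding a factorization of the volume superpotential $\omega$ as an antisymmetrized shuffle of the two block superpotentials; one then extracts a support cycle that visibly splits as a concatenation of a $V_1$-cycle and a $V_2$-cycle in the McKay quiver. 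Once these two points are secured, the degree count together with the acyclicity obstruction closes the argument uniformly in $d_1 + d_2 = d$.
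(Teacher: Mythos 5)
The paper does not prove this theorem; it is imported from Thibault, with only the gloss that ``any grading giving $R \ast G$ Gorenstein parameter $1$ leaves an infinite-dimensional degree $0$ part.'' Your combinatorial core --- the two closed cycles $c_1$, $c_2$ traced by the determinant-one blocks, the observation that $c_1c_2$ lies in $\supp(\omega)$, and the degree count forcing one block's cycle entirely into degree $0$ --- is exactly the right mechanism and matches the argument the paper runs by hand in \Cref{Ex:Klein-Group-Ungradeable}. But your closing step is a genuine gap: the claim that a finite-dimensional algebra of finite global dimension has an acyclic quiver is false (the quiver $1\rightleftarrows 2$ with one zero relation has global dimension $2$), so an oriented cycle in the quiver of $\Lambda=(R\ast G)_0$ is not by itself a contradiction; nor is acyclicity known for quivers of $(d-1)$-representation infinite algebras in general. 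The contradiction that actually works, and the one Thibault and this paper use (cf.\ the proof of \Cref{Pro: HPPCut iff positive and homogeneous}), is that your degree-$0$ cycle is \emph{non-nilpotent}: it represents $X_1\cdots X_{d_1}\otimes e_\chi$, whose powers $(X_1\cdots X_{d_1})^k e_\chi$ are nonzero and lie in $e_\chi(R\ast G)e_\chi$ precisely because $\det(\rho\restriction_{V_1})$ is trivial. Hence $\Bbbk[c_1]$ is an infinite-dimensional polynomial subalgebra of $\Gamma_0$, contradicting $\dim_\Bbbk\Gamma_0<\infty$ in the definition of a higher preprojective grading.

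The second gap is the reduction to cuts. The theorem quantifies over all higher preprojective gradings, but \Cref{Pro: Potential is homogeneous} applies only to cuts, and whether an arbitrary higher preprojective grading is conjugate to one arising from a cut is precisely the open \Cref{Ques:Compatibility of Koszul and HPPG}. You flag this as the main obstacle but do not resolve it, so within this paper's toolkit your argument only excludes cuts rather than all gradings; Thibault's proof sidesteps this by arguing at the level of the graded bimodule resolution and the Gorenstein parameter rather than via arrow degrees. Finally, your extension to non-abelian $G$ is only sketched; this does not affect the paper's application (which uses only abelian $G$ and $d_1=1$), but it leaves the theorem as stated unproved.
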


Note that the criterion works for an arbitrary finite group $G$. The proof shows that any grading giving $R \ast G$ Gorenstein parameter $1$ leaves an infinite-dimensional degree $0$ part. 

\subsection{Skewed higher preprojective gradings}
If we have a given higher preprojective grading on an algebra, one can ask how it behaves under skewing by a group action. The following theorem by Le Meur \cite{LeMeur} gives an answer to this question under the assumption that the action is by graded automorphisms. 

\begin{Theo}\cite[Proposition 6.2.1]{LeMeur} \label{Theo: LeMeur}
Suppose $\Lambda$ is a finite-dimensional $\Bbbk$-algebra with $\gdim(\Lambda) \leq d-1$, and suppose that the finite group $G$ acts on $\Lambda$ by automorphisms. Further, suppose that $\chr \Bbbk \nmid G$. Then we have the following:
\begin{enumerate}
    \item $\Lambda$ is $(d-1)$-representation infinite if and only if $\Lambda \ast G$ is $(d-1)$-representation infinite.
    \item If $\Lambda$ is $(d-1)$-representation infinite, then $G$ acts on the $d$-preprojective algebra $\Pi_d(\Lambda)$, and the skewed preprojective algebra $\Pi_d(\Lambda) \ast G$ is isomorphic to the $d$-preprojective algebra $\Pi_d(\Lambda \ast G)$. 
\end{enumerate}
\end{Theo}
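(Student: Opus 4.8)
The plan is to translate both assertions into statements about the bimodule Calabi--Yau property and the Gorenstein parameter, which transform transparently under skewing, and to use the bijection of \Cref{Theo: HPG is f.d. GP1} as the bridge between these homological data and the notion of $(d-1)$-representation infiniteness. The standing hypothesis $\chr \Bbbk \nmid |G|$ enters everywhere through the semisimplicity of $\Bbbk G$, which makes the extension $\Lambda \subseteq \Lambda \ast G$ separable and Frobenius; this is what gives the good behaviour of the functors below.

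The technical core is a handful of facts about the skew-group construction together with the compatibility of the Nakayama functor with it. Since $\Lambda \ast G \cong \Lambda \otimes \Bbbk G$ is free of rank $|G|$ as a left and as a right $\Lambda$-module, the restriction functor $\mathrm{res}\colon \mod(\Lambda \ast G) \to \mod \Lambda$ and the induction functor $\mathrm{ind} = (\Lambda \ast G)\otimes_\Lambda(-)$ are exact, biadjoint, and send projectives to projectives; moreover $\mathrm{res}\,\mathrm{ind}(M) \cong \bigoplus_{g \in G} {}^{g}M$, so every $\Lambda$-module $M$ is a direct summand of $\mathrm{res}\,\mathrm{ind}(M)$, and $\gdim(\Lambda \ast G) = \gdim(\Lambda) \le d-1$ so that $\Pi_d(\Lambda \ast G)$ is defined. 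Using the Frobenius structure one obtains a bimodule isomorphism $D(\Lambda \ast G) \cong D\Lambda \otimes \Bbbk G$, from which I would deduce natural isomorphisms $\mathrm{res} \circ \nu_{\Lambda \ast G} \cong \nu_\Lambda \circ \mathrm{res}$ and $\nu_{\Lambda \ast G} \circ \mathrm{ind} \cong \mathrm{ind} \circ \nu_\Lambda$ on $\Db$, hence the same for $\nu_{d-1} = \nu \circ [-(d-1)]$. Part (1) is then formal. If $\Lambda$ is $(d-1)$-representation infinite and $Q$ is a projective $\Lambda \ast G$-module, then $\mathrm{res}\,\nu_{d-1}^{-i}(Q) \cong \nu_{d-1}^{-i}(\mathrm{res}\,Q)$ is concentrated in degree $0$, and since $\mathrm{res}$ is exact and faithful it detects degree-$0$ concentration, so $\nu_{d-1}^{-i}(Q) \in \mod(\Lambda \ast G)$. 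Conversely, if $\Lambda \ast G$ is $(d-1)$-representation infinite and $P$ is a projective $\Lambda$-module, then $\nu_{d-1}^{-i}(P)$ is a direct summand of $\nu_{d-1}^{-i}(\mathrm{res}\,\mathrm{ind}\,P) \cong \mathrm{res}\,\nu_{d-1}^{-i}(\mathrm{ind}\,P)$, which lies in degree $0$; a summand of a degree-$0$ complex is degree-$0$, so $\Lambda$ is $(d-1)$-representation infinite.

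For part (2), I would first note that $G$ acts diagonally on $\Pi_d(\Lambda) = T_\Lambda(E)$ with $E = \Ext^{d-1}_\Lambda(D\Lambda, \Lambda)$, via its given action on $\Lambda$ and the induced action on the bimodule $E$. Applying the compatibility of the previous paragraph to the computation of the $\Ext$-bimodule yields an isomorphism of $\Lambda \ast G$-bimodules
\[ E' := \Ext^{d-1}_{\Lambda \ast G}\bigl(D(\Lambda \ast G),\, \Lambda \ast G\bigr) \;\cong\; E \otimes \Bbbk G, \]
where the right-hand side carries the $\Lambda \ast G$-bimodule structure induced from the $G$-equivariant $\Lambda$-bimodule $E$. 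It then remains to prove the purely formal identity that skewing commutes with forming tensor algebras: for any $G$-equivariant $\Lambda$-bimodule $M$ there is an isomorphism of graded algebras
\[ T_\Lambda(M) \ast G \;\cong\; T_{\Lambda \ast G}(M \otimes \Bbbk G), \]
obtained by matching the degree-$1$ generators and checking compatibility of the two multiplications degree by degree. Taking $M = E$ and combining with the bimodule isomorphism above gives $\Pi_d(\Lambda) \ast G \cong \Pi_d(\Lambda \ast G)$ as graded algebras, which is exactly the content of (2).

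I expect the main obstacle to be the Nakayama-compatibility isomorphisms, together with the bookkeeping needed to see that they respect the gradings so that the Gorenstein parameter is preserved as $1$. Establishing $D(\Lambda \ast G) \cong D\Lambda \otimes \Bbbk G$ as bimodules and then propagating it through $\RHom_{\Lambda \ast G}(-, \Lambda \ast G)$ demands care with the interacting left and right module structures and the $G$-twist, and this is precisely where the hypotheses on $\chr \Bbbk$ and the Frobenius structure are indispensable. Once these are in hand, everything else is formal, and \Cref{Theo: HPG is f.d. GP1} furnishes a reassuring consistency check for the whole argument: skewing visibly preserves bimodule $d$-Calabi--Yauness, Gorenstein parameter $1$, and finite-dimensionality of the graded pieces, while the degree-$0$ part of $\Pi_d(\Lambda) \ast G$ is manifestly $\Lambda \ast G$, so the two descriptions of $\Pi_d(\Lambda \ast G)$ must agree.
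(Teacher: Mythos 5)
The paper does not prove this statement at all: it is imported verbatim from \cite[Proposition 6.2.1]{LeMeur}, so there is no internal proof to compare your attempt against. That said, your sketch follows what is essentially the argument of the cited source (the higher-dimensional analogue of the Reiten--Riedtmann analysis of skew-group algebras): exactness and biadjointness of restriction and induction along the separable Frobenius extension $\Lambda \subseteq \Lambda \ast G$, the bimodule identification $D(\Lambda \ast G) \cong D\Lambda \otimes \Bbbk G$, the resulting commutation of $\nu_{d-1}^{-1}$ with $\mathrm{res}$ and $\mathrm{ind}$, and the graded isomorphism $T_\Lambda(E) \ast G \cong T_{\Lambda \ast G}(E \otimes \Bbbk G)$. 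The outline is sound, and both halves of part (1) are correctly reduced to these compatibilities. The one place where you gesture at genuine content rather than supplying it is the bimodule computation in part (2): to obtain $\Ext^{d-1}_{\Lambda \ast G}(D(\Lambda \ast G), \Lambda \ast G) \cong E \otimes \Bbbk G$ \emph{as $(\Lambda\ast G)$-bimodules}, the Nakayama compatibility must be established as an isomorphism of $G$-equivariant functors, i.e.\ compatible with the $G$-twists on both the left and right module structures simultaneously, not merely after forgetting the $G$-action; this equivariant bookkeeping is precisely where the proof in \cite{LeMeur} does its work, and your text correctly identifies it as the main obstacle without carrying it out. Two small corrections of emphasis: the splitting $M \mid \mathrm{res}\,\mathrm{ind}(M)$ holds for arbitrary $G$ (it is the summand indexed by the identity element), so the hypothesis $\chr \Bbbk \nmid |G|$ is not needed there; where it is genuinely needed is for the splitting $N \mid \mathrm{ind}\,\mathrm{res}(N)$ via the averaging map, which gives $\gdim(\Lambda \ast G) \leq \gdim(\Lambda)$ and hence that $\Pi_d(\Lambda \ast G)$ is defined at all.
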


\begin{Def}
Under the same assumptions, we call the $d$-preprojective grading on $\Pi_d(\Lambda) \ast G \simeq \Pi_d(\Lambda \ast G) $ the \emph{skewed grading} of the $d$-preprojective grading on $\Pi_d(\Lambda)$.
\end{Def}

\section{Abelian subgroups of \texorpdfstring{$\SL_d(k)$}{SL(k)}}\label{Sec: Abelian groups}
In this section, we fix an abelian group $G$, and assume that it embeds in $\SL_d(k)$. We describe the possible forms of $G$ up to isomorphism, and once an isomorphism type of $G$ is fixed we describe all possible ways $G$ can arise as a subgroup up to conjugation, i.e.\ we describe all relevant representations of $G$. These statements are well-known, but we prove them here for convenience.

To begin with, we decompose the finite abelian group $G$ into invariant factors. That means, we fix a minimal set of positive integers $m_1, m_2, \ldots, m_l$ such that $m_i \mid m_{i+1}$ for $1 \leq i < l$ and 
\[ G \simeq C_{m_1} \times \cdots \times C_{m_l}. \]

Next, we characterise the faithful representations of an abelian group $G$ in terms of the dual group $\hat{G} = \Hom(G, \Bbbk^\ast)$. Note that for an abelian group, we have $G \simeq \hat{G}$. For a subgroup $H \leq G$ one defines \[H^\bot = \{ \chi \in \hat{G} \mid H \subseteq \Ker(\chi) \} \leq \hat{G},\] and similarly for $U \leq \hat{G}$ one defines \[U^\bot = \{ g \in G \mid \chi(g) = 1 \textrm{ for all } \chi \in U \} \leq G.\] The following fact from character theory is well known, see \cite[Problem 2.7]{isaacsCT}.

\begin{Lem}
Let $G$ be an abelian group and $H \leq G$. The assignment $H \mapsto H^\bot$ is a lattice antiisomorphism between the subgroup lattices of $G$ and of $\hat{G}$. The inverse is given by $U \mapsto U^\bot$ for $U \leq \hat{G}$.
\end{Lem}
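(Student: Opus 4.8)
The plan is to reduce the statement to two facts: that $H \mapsto H^\bot$ is order-reversing (as is the analogous operation on subgroups of $\hat{G}$), and that the two perp-operations are mutually inverse. Granting these, an order-reversing bijection whose inverse is also order-reversing is an order anti-isomorphism of the two subgroup lattices, hence automatically carries meets to joins and joins to meets; this last step is purely formal and needs no computation. The order-reversing property itself is immediate from the definitions: if $H_1 \subseteq H_2 \leq G$, then any $\chi \in \hat{G}$ with $H_2 \subseteq \Ker(\chi)$ also satisfies $H_1 \subseteq \Ker(\chi)$, so $H_2^\bot \subseteq H_1^\bot$, and symmetrically for subgroups of $\hat{G}$.

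The core of the argument is the order count $|H^\bot| = |G|/|H|$. To obtain it, I would identify $H^\bot$ with the dual of the quotient: a character $\chi$ lies in $H^\bot$ precisely when $H \subseteq \Ker(\chi)$, which happens exactly when $\chi$ factors through the projection $G \twoheadrightarrow G/H$. Inflation along this projection therefore yields an isomorphism $\widehat{G/H} \simeq H^\bot$. Since $G/H$ is again a finite abelian group, the self-duality $|\hat{A}| = |A|$ (which is the isomorphism $A \simeq \hat{A}$ recalled above, applied to $A = G/H$) gives $|H^\bot| = |\widehat{G/H}| = |G/H| = |G|/|H|$.

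It then remains to show $(H^\bot)^\bot = H$ and, symmetrically, $(U^\bot)^\bot = U$ for $U \leq \hat{G}$. The inclusion $H \subseteq (H^\bot)^\bot$ is trivial, as every character in $H^\bot$ vanishes on $H$ by definition. Applying the order count twice—once in $G$ and once in $\hat{G}$, using the canonical identification $G \simeq \hat{\hat{G}}$ so that $U \mapsto U^\bot$ is the same operation for the dual group—gives $|(H^\bot)^\bot| = |\hat{G}|/|H^\bot| = |G|/(|G|/|H|) = |H|$. Combined with the trivial inclusion this forces equality, and the same reasoning handles $(U^\bot)^\bot = U$. Hence both maps are bijections, they are mutually inverse, and by the formal observation of the first paragraph the lemma follows.

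I expect the only genuine content to lie in the order count, and within it the clean identification $H^\bot \simeq \widehat{G/H}$ together with the finite-abelian self-duality $|\hat{A}| = |A|$; once $|H^\bot| = |G|/|H|$ is in hand, the double-perp identities and the anti-isomorphism property are forced by counting and by elementary order-theoretic formalities.
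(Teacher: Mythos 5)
Your argument is correct and complete. The paper itself gives no proof of this lemma; it simply cites it as a well-known fact from character theory (Isaacs, Problem 2.7), so there is no in-paper argument to compare against. Your proof is the standard one: the order-reversal is immediate, the key computation $|H^\bot| = |G|/|H|$ follows from the inflation isomorphism $H^\bot \simeq \widehat{G/H}$ together with $|\hat{A}| = |A|$ for finite abelian $A$, and the double-perp identities then follow from the trivial inclusion $H \subseteq (H^\bot)^\bot$ plus counting (using the canonical identification $G \simeq \hat{\hat{G}}$ to transport the order count to subgroups of $\hat{G}$). The final observation that a mutually inverse pair of order-reversing bijections is automatically a lattice anti-isomorphism is indeed purely formal. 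The only implicit hypothesis is that $G$ is finite, which is the standing assumption in the paper, and which your counting argument genuinely requires.
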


\begin{Lem}\label{Lem:Faithful_Iff_Generating}
Given a representation $\rho$ of an abelian group $G$, decompose it into its irreducible direct summands $\rho = \chi_1 \oplus \cdots \oplus \chi_n$. Then $\rho$ is faithful if and only if $\{ \chi_1 , \ldots , \chi_n\}$ is a generating set of the dual group $\hat{G}$. 
\end{Lem}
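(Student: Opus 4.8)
The plan is to identify the kernel of $\rho$ explicitly and then translate the condition $\Ker(\rho) = \{1\}$ into a statement about $\hat{G}$ via the lattice antiisomorphism of the previous lemma. Since $G$ is abelian and $\Bbbk$ is algebraically closed of characteristic $0$, each irreducible summand $\chi_i$ is one-dimensional, so $\chi_i \in \hat{G}$; write $U = \langle \chi_1, \ldots, \chi_n \rangle \leq \hat{G}$ for the subgroup they generate.

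First I would compute $\Ker(\rho)$. An element $g \in G$ acts as the identity under $\rho$ precisely when $\chi_i(g) = 1$ for every $i$. Because each $\chi_i$ is a homomorphism and the $\chi_i$ generate $U$, this is equivalent to $\chi(g) = 1$ for all $\chi \in U$; that is, $\Ker(\rho) = \bigcap_i \Ker(\chi_i) = U^\bot$.

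Then I would invoke the antiisomorphism. The assignment $U \mapsto U^\bot$ is the inverse of the inclusion-reversing bijection $H \mapsto H^\bot$, hence is itself an inclusion-reversing bijection between the subgroup lattices of $\hat{G}$ and of $G$. In particular it sends the top element to the bottom element, so $U^\bot = \{1\}$ if and only if $U = \hat{G}$. Chaining the equivalences yields that $\rho$ is faithful $\iff \Ker(\rho) = \{1\} \iff U^\bot = \{1\} \iff U = \hat{G}$, which is exactly the statement that $\{ \chi_1, \ldots, \chi_n \}$ generates $\hat{G}$.

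The step I would treat most carefully is the identification $\Ker(\rho) = U^\bot$: one must verify that requiring $\chi_i(g) = 1$ on the finitely many generators already forces $\chi(g) = 1$ for all $\chi \in U$, which holds because $\chi \mapsto \chi(g)$ is multiplicative in $\chi$. Apart from this, the argument is a direct application of the stated antiisomorphism and presents no real obstacle. If one preferred to avoid the previous lemma, the same conclusion follows directly: faithfulness of $\rho$ when the $\chi_i$ generate $\hat{G}$ is immediate from the fact that the characters of a finite abelian group separate points, and conversely a proper $U \subsetneq \hat{G}$ would have nontrivial $U^\bot$ by order-counting, supplying a nonidentity element of $\Ker(\rho)$.
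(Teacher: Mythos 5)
Your proof is correct and follows essentially the same route as the paper: both identify $\Ker(\rho) = \bigcap_i \Ker(\chi_i)$ with the perp of $\langle \chi_1, \ldots, \chi_n \rangle$ and then apply the lattice antiisomorphism to conclude that faithfulness is equivalent to the $\chi_i$ generating $\hat{G}$. The only cosmetic difference is that the paper applies $(-)^\bot$ once more to write $\Ker(\rho)^\bot = \langle \chi_1, \ldots, \chi_n \rangle$, whereas you compare $U^\bot$ with $\{1\}$ directly; the content is identical.
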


\begin{proof}
Let $U \leq \hat{G}$ be arbitrary. After choosing a generating set $\{u_1, \ldots, u_l  \}$ of $U$, we can rewrite the above correspondence as $ U^\bot = \bigcap_{u \in U} \Ker(u) = \bigcap_{i = 1}^l \Ker(u_i)$.
Under this correspondence, we thus obtain that 
\[ \Ker(\rho)^\bot = (\bigcap_{i=1}^n \Ker(\chi_i))^\bot = \langle \chi_1, \ldots, \chi_n \rangle, \] so $\Ker(\rho) = \{ 1\}$ if and only if $\langle \chi_1, \ldots, \chi_n \rangle = \hat{G}$. 
\end{proof}

In light of the previous lemma, we need a bound on the number of generators of $G$, which will also apply to $\hat{G}$ since the groups are isomorphic.

\begin{Lem}\label{Lem:Number_Of_Generators_Of_Abelian}
Every generating set of $G \simeq C_{m_1} \times \cdots \times C_{m_l}$ has at least $l$ elements. 
\end{Lem}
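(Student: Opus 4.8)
The plan is to reduce the statement to a dimension count for a vector space over a finite field. The key observation is that, because the invariant factors satisfy $m_1 \mid m_2 \mid \cdots \mid m_l$, every prime dividing the smallest factor $m_1$ automatically divides all of the $m_i$. So the first step is to fix a prime $p$ dividing $m_1$; such a $p$ exists since by minimality of the invariant-factor decomposition each $m_i$ is nontrivial, so in particular $m_1 \geq 2$. (If $G$ is trivial then $l = 0$ and there is nothing to prove.)

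Next I would pass to the quotient $G / G^p$, where $G^p = \{ g^p \mid g \in G \}$ is the subgroup of $p$-th powers. Since $p \mid m_i$ for every $i$, each cyclic factor contributes $C_{m_i} / (C_{m_i})^p \simeq C_p$, and these combine to give an isomorphism $G / G^p \simeq (C_p)^l$. As $p$ is prime, $C_p \simeq \mathbb{F}_p$, and so $G / G^p$ is naturally an $\mathbb{F}_p$-vector space of dimension exactly $l$.

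Finally I would invoke functoriality of the quotient map $\pi \colon G \to G/G^p$: if a set $S \subseteq G$ generates $G$, then $\pi(S)$ generates $G/G^p$. But a generating set of the abelian group $G/G^p \simeq \mathbb{F}_p^{\,l}$ is precisely a spanning set of an $l$-dimensional $\mathbb{F}_p$-vector space, and such a set must contain at least $l$ elements by elementary linear algebra. Since $|\pi(S)| \leq |S|$, we conclude $|S| \geq l$, which is the claim.

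I do not expect a serious obstacle here; the only point requiring a little care is the identification $G / G^p \simeq (C_p)^l$, which relies precisely on the divisibility chain $m_1 \mid \cdots \mid m_l$ guaranteeing that the chosen prime $p$ divides every invariant factor. This is exactly why the invariant-factor form is the convenient decomposition to work with: had one instead used a primary (prime-power) decomposition, a fixed prime $p$ would in general divide only some of the cyclic factors, and one would have to track the number of $p$-primary factors separately for each prime and then take a maximum, making the bookkeeping less transparent.
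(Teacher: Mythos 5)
Your argument is correct and is essentially identical to the paper's proof: both pick a prime $p \mid m_1$, pass to the elementary abelian quotient $G/G^p \simeq (C_p)^l$, and conclude by counting spanning sets of an $l$-dimensional $\mathbb{F}_p$-vector space. No gaps.
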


\begin{proof}
We use that $G$ has an elementary abelian quotient, obtained by picking a prime $p \mid m_1$, considering the subgroup $H = \{ g^p \mid g \in G \}$ and then taking the quotient $G/H \simeq (C_p)^l$. Then, since $(C_p)^l$ is a vector space over $\mathbb{F}_p$, every generating set of $(C_p)^l$ has at least $l$ elements, so the same is true for $G$. 
\end{proof}

We can now give a tight bound on the number of invariant factors of $G \leq \SL_d(k)$.

\begin{Lem}
Suppose $G \simeq C_{m_1} \times \cdots \times C_{m_l}$ embeds into $\GL_{d}(k)$ for some $d \in \mathbb{N}$. Then the number of invariant factors $l$ satisfies $l \leq d$.
\end{Lem}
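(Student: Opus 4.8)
The plan is to combine the two preceding lemmas and nothing more. First I would unwind the hypothesis: an embedding $G \hookrightarrow \GL_d(k)$ is precisely the datum of a faithful $d$-dimensional representation $\rho$ of $G$. Since $G$ is abelian and $k$ is algebraically closed, every irreducible $k$-representation of $G$ is one-dimensional, so $\rho$ splits as a direct sum $\rho = \chi_1 \oplus \cdots \oplus \chi_d$ of (not necessarily distinct) characters $\chi_i \in \hat{G}$. The key point to record here is that the number of summands, counted with multiplicity, is exactly $d = \dim \rho$.

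Next I would invoke \Cref{Lem:Faithful_Iff_Generating}. Because $\rho$ is faithful, that lemma tells us the set $\{\chi_1, \ldots, \chi_d\}$ generates $\hat{G}$. This exhibits a generating set of $\hat{G}$ of cardinality at most $d$; any coincidences among the $\chi_i$ only shrink the set, so the bound on its size is preserved.

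Finally, since $\hat{G} \simeq G \simeq C_{m_1} \times \cdots \times C_{m_l}$ share the same invariant factors, I would apply \Cref{Lem:Number_Of_Generators_Of_Abelian} to $\hat{G}$: every generating set of $\hat{G}$ has at least $l$ elements. Comparing this lower bound with the generating set of size at most $d$ produced in the previous step yields $l \leq d$, as desired.

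I do not expect any genuine obstacle, as the statement is essentially a corollary of the two earlier lemmas. The only points deserving a moment's care are that the count of irreducible summands of $\rho$ equals $d$ — which rests on all irreducibles of an abelian group over an algebraically closed field being one-dimensional — and that passing to the dual group $\hat{G}$ preserves the invariant factors, so that the lower bound of \Cref{Lem:Number_Of_Generators_Of_Abelian} genuinely applies to $\hat{G}$ rather than only to $G$ itself.
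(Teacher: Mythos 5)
Your proof is correct and follows essentially the same route as the paper: decompose the faithful representation into $d$ one-dimensional summands, use \Cref{Lem:Faithful_Iff_Generating} to obtain a generating set of $\hat{G}$ of size at most $d$, and compare with the lower bound of $l$ from \Cref{Lem:Number_Of_Generators_Of_Abelian} applied to $\hat{G} \simeq G$. The two points you flag for care (one-dimensionality of irreducibles and the isomorphism $\hat{G}\simeq G$) are exactly the ones the paper relies on implicitly.
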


\begin{proof}
We denote by $\rho$ the representation of $G$ that is given by the inclusion in $\GL_d(k)$, which means that $\rho$ is a faithful representation of $G$. Combining \Cref{Lem:Faithful_Iff_Generating} with \Cref{Lem:Number_Of_Generators_Of_Abelian} shows that $\rho$ must have at least $l$ irreducible direct summands. Since each summand has dimension 1, we have that $\rho$ has at least dimension $l$, which proves the claim.
\end{proof}

\begin{Lem}\label{Lem:Number_Of_Cyclic_Factors}
Suppose $G \simeq C_{m_1} \times \cdots \times C_{m_l}$ embeds into $\SL_d(k)$ for some $d \in \mathbb{N}$. Then the number of invariant factors $l$ satisfies $l \leq d-1$.
\end{Lem}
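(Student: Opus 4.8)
The plan is to refine the bound $l \leq d$ just established for embeddings into $\GL_d(k)$ by exploiting the extra determinant constraint coming from $G \leq \SL_d(k)$. I would argue by contradiction: assuming $l = d$, I would produce a generating set of $\hat{G}$ that is strictly too small, contradicting \Cref{Lem:Number_Of_Generators_Of_Abelian}.

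First I would let $\rho$ denote the faithful representation given by the inclusion $G \hookrightarrow \SL_d(k)$, and decompose it into one-dimensional summands $\rho = \chi_1 \oplus \cdots \oplus \chi_d$. Under the assumption $l = d$, the previous lemma gives $\dim \rho \geq l = d$, and since $\dim \rho = d$ this forces exactly $d$ summands, each of dimension $1$. By \Cref{Lem:Faithful_Iff_Generating}, faithfulness of $\rho$ then says precisely that $\{\chi_1, \ldots, \chi_d\}$ is a generating set of $\hat{G}$.

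Next I would bring in the $\SL_d$-condition. Because $G$ is abelian, in the chosen basis every $\rho(g)$ is diagonal with entries $\chi_1(g), \ldots, \chi_d(g)$, so $\det(\rho(g)) = \prod_{i=1}^d \chi_i(g)$. The hypothesis $G \leq \SL_d(k)$ means this equals $1$ for all $g \in G$, i.e. the product character $\chi_1 \cdots \chi_d$ is trivial in $\hat{G}$. This yields the relation $\chi_d = (\chi_1 \cdots \chi_{d-1})^{-1}$, so $\chi_d$ lies in the subgroup generated by $\chi_1, \ldots, \chi_{d-1}$, and therefore $\{\chi_1, \ldots, \chi_{d-1}\}$ already generates $\hat{G}$. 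Finally I would invoke \Cref{Lem:Number_Of_Generators_Of_Abelian}: since $\hat{G} \simeq G \simeq C_{m_1} \times \cdots \times C_{m_d}$ has $d$ invariant factors, every generating set has at least $d$ elements, contradicting the generating set of size $d-1$ just obtained. Hence $l = d$ is impossible and $l \leq d-1$.

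I do not anticipate a genuine obstacle; the one conceptual step that carries the argument is recognising that the determinant condition translates into a single multiplicative relation among the $\chi_i$, which in turn lets one drop a generator. The only point needing mild care is justifying that $\det(\rho(g))$ factors as the product of the diagonal character values, which is immediate from the simultaneous diagonalisability of the abelian group $G$.
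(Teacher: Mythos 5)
Your proof is correct and is essentially the paper's argument: both exploit the determinant relation $\chi_1\cdots\chi_d = \bbone$ to make one summand redundant and then invoke the generator-count bound of \Cref{Lem:Number_Of_Generators_Of_Abelian}. The paper packages this by observing that $\rho' = \chi_1 \oplus \cdots \oplus \chi_{d-1}$ is a faithful embedding into $\GL_{d-1}(k)$ and citing the previous lemma, whereas you unfold that lemma and work directly with generating sets of $\hat{G}$ (and your contradiction framing is unnecessary, since the argument proves $l \leq d-1$ directly); the substance is the same.
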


\begin{proof}
We denote by $\rho$ the representation of $G$ that is given by the inclusion in $\SL_d(k)$. Since $G$ is abelian, we can decompose $\rho = \chi \oplus \rho' $ into a representation $\chi$ of dimension 1 and a representation $\rho'$ of dimension $d-1$. Furthermore, the condition $\rho(G) \leq \SL_d(k)$ means that we have $\chi = \det(\rho')^{-1}$. We note that $\rho'$ is a faithful representation of $G$, since any element $g \in \Ker(\rho')$ certainly satisfies $\det(\rho'(g)) = 1$, so $\Ker(\rho') \subseteq \Ker(\rho) = \{1\}$. 
Thus, we have found an embedding $ \rho' \colon G \to \GL_{d-1}(k)$ and the previous lemma gives our claim.  
\end{proof}

We apply the statements of this section to our situation of interest and consider abelian subgroups $G$ of $\SL_3(k)$. By \Cref{Lem:Number_Of_Cyclic_Factors}, these groups have at most $2$ invariant factors, so they are either cyclic groups $C_m$ or products of cyclic groups $C_{m_1} \times C_{m_2}$ where $m_1 \mid m_2$. By \Cref{Lem:Faithful_Iff_Generating}, any faithful representation $\rho = \chi_1 \oplus \chi_2 \oplus \chi_3$ in $\SL_3(k)$ of the group must give a generating set $\{ \chi_1, \chi_2, \chi_3\}$ of $\hat{G}$. But since the representation is in $\SL_3(k)$, we know that $\chi_3 = (\chi_1 \cdot \chi_2)^{-1}$, so in fact we have that $\{ \chi_1, \chi_2 \}$ must be a generating set of $\hat{G}$. Thus, for the non-cyclic case, we must indeed consider all minimal generating sets of $G$. It will be important for us to find certain direct product decompositions of $G$ that are compatible with a chosen generating set of $\hat{G}$. 

\begin{Lem}\label{Lem:Factor_Decomp}
Suppose the embedding $G = C_m \times C_{lm} \leq \SL_3(k)$ is given by $\rho = \chi_1 \oplus \chi_2 \oplus \chi_3$. Further, suppose that no summand $\chi_i$ is trivial, and that $|G | > 4$. Then there exists a direct factor decomposition $G = H \times K$ where $H$ is cyclic and such that none of the restrictions $\chi_i \restriction_H$ are trivial.
\end{Lem}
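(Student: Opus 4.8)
The plan is to produce a single cyclic direct factor of maximal order. Recall the standard fact (part of the structure theorem) that in a finite abelian group any element of maximal order generates a direct summand. Since the exponent of $G = C_m \times C_{lm}$ is $lm$, it therefore suffices to find an element $g \in G$ of order $lm$ with $\chi_i(g) \neq 1$ for $i = 1,2,3$: then $H = \langle g \rangle$ is cyclic, it is automatically a direct factor, and each restriction $\chi_i\restriction_H$ is a character of the cyclic group $\langle g \rangle$ which is nontrivial on the generator $g$, hence nontrivial. Writing $M \subseteq G$ for the set of elements of order $lm$ and $N_i = \Ker(\chi_i)$, the whole statement reduces to showing $M \not\subseteq N_1 \cup N_2 \cup N_3$. (In the cyclic case $m = 1$ there is nothing to do: take $H = G$, $K = \{1\}$, and use $\chi_i \neq 1$.)

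I would prove the stronger inequality $|M| > |M \cap N_1| + |M \cap N_2| + |M \cap N_3|$. The three inputs are that each $N_i$ is a proper subgroup (as $\chi_i \neq 1$), that $\chi_1 \chi_2 \chi_3 = 1$ (the determinant-one condition), and that $\{\chi_1,\chi_2\}$ generates $\hat{G}$, which is exactly the faithfulness encoded in \Cref{Lem:Faithful_Iff_Generating}. First I would compute $|M|$, the number of elements of order $lm$ in $C_m \times C_{lm}$ (conveniently bounded below by $m\,\varphi(lm)$, and computable exactly by Möbius inversion over the divisors of $lm$). Then I would bound each $|M \cap N_i|$: the set $M \cap N_i$ is empty unless $N_i$ has exponent $lm$, i.e.\ unless $\Ker(\chi_i)$ itself contains a maximal-order element. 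The essential point is that the generating condition severely limits how many of the $\chi_i$ can have such kernels; concretely, two distinct characters whose kernels both contain maximal-order elements would span too small a subgroup of $\hat{G}$ and hence could not generate it, contradicting faithfulness. This keeps $\sum_i |M \cap N_i|$ strictly below $|M|$.

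It should be emphasised where the inequality is tight, and this is precisely $G \cong C_2 \times C_2$: there $M$ consists of the three involutions, the three kernels $N_1, N_2, N_3$ are exactly the three order-$2$ subgroups, and $N_1 \cup N_2 \cup N_3 = G$, so no admissible $g$ exists. This is the single genuine obstruction, and it is exactly the group excluded in the main classification; the hypothesis $|G| > 4$ rules it out, since among non-cyclic $G \cong C_m \times C_{lm}$ it is the only one of order at most $4$. For all remaining non-cyclic groups the count of maximal-order elements dominates the contribution of the kernels, which I would organise as a short direct check of the smallest groups (the cases $m = 2$ with small $l$, and $C_3 \times C_3$) together with a uniform estimate once $|G|$ is large.

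The main obstacle is this counting step, and in particular obtaining a clean enough upper bound on $|M \cap N_i|$. The delicate regime is that of small modulus, where a single kernel can already contain a substantial proportion of the maximal-order elements; there the crude bound $|M\cap N_i| \le |N_i| \le |G|/2$ is far too weak, and one genuinely needs faithfulness to exclude two simultaneously ``bad'' characters, while verifying that $C_2 \times C_2$ is the unique configuration in which the three kernels conspire to cover all elements of maximal order.
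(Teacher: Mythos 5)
Your reduction is sound and is in fact precisely dual to the paper's argument: a cyclic subgroup $H=\langle g\rangle$ generated by an element of maximal order $lm$ is automatically a direct factor, and under the lattice antiisomorphism $H\mapsto H^\bot$ your search for $g\in G$ of order $lm$ outside $N_1\cup N_2\cup N_3$ corresponds exactly to the paper's search for an order-$m$ direct factor $U\leq\hat G$ with cyclic complement and $\chi_1,\chi_2,\chi_3\notin U$. The difference is that the paper avoids all counting by exhibiting an explicit short list of candidates for $U$ (namely $\langle x\rangle$, $\langle(x,y^l)\rangle$, $\{1\}\times C_{lm}$ and $\langle(x,y^{-l})\rangle$) and running a four-step case analysis in which $|G|>4$ enters only at the last step, whereas you propose to prove the strict inequality $|M|>\sum_i|M\cap N_i|$. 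That inequality is true (it follows from the paper's lemma), but your proposal does not actually establish it, and this is a genuine gap rather than a routine verification.

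Concretely, the mechanism you single out as ``the essential point'' --- that two distinct characters whose kernels both contain maximal-order elements cannot generate $\hat G$ --- is false. Take $G=C_4\times C_4$ with $\hat G=\mathbb{Z}/4\times\mathbb{Z}/4$, $\chi_1=(1,0)$, $\chi_2=(0,1)$, $\chi_3=(3,3)$: this is a legitimate configuration (all $\chi_i$ nontrivial, $\chi_1\chi_2\chi_3=1$, $\{\chi_1,\chi_2\}$ generates), yet all three kernels $\{0\}\times\mathbb{Z}/4$, $\mathbb{Z}/4\times\{0\}$ and $\langle(1,3)\rangle$ contain elements of order $4=lm$. (The lemma still holds there --- $(1,1)$ avoids all three kernels --- but not for the reason you give.) Your fallback, a ``uniform estimate once $|G|$ is large,'' also does not close with the bounds you name: using $|M|\ge m\varphi(lm)$ against $|M\cap N_i|\le|N_i|$ together with the constraint $|\chi_i||\chi_j|\ge|G|$ gives $\sum_i|N_i|\le lm+2m\sqrt{l}$, and the required inequality $\varphi(lm)>l+2\sqrt{l}$ fails badly for, say, $m=2$ and $l$ highly composite (e.g.\ $lm=210$ has $\varphi(lm)=48<105$). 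To make the counting route work you would need to bound the number of \emph{maximal-order} elements of $N_i$ (not $|N_i|$ itself), which requires tracking the invariant factors of each kernel prime by prime; none of that is supplied. As written, the central inequality is asserted, its stated justification is wrong, and the sketched estimates do not suffice, so the proof is incomplete.
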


\begin{proof}
We work in the dual group $\hat{G}$. Note that for $H \leq G$ we have a natural isomorphism $ \hat{G}/H^\bot \to \hat{H}, (\chi \cdot  H^\bot) \mapsto \chi \restriction_H$. Thus, restricting the representation $\chi_i$ to a subgroup $H \leq G$ corresponds to taking the coset of $\chi_i$ in $\hat{G}/H^\bot$. It therefore suffices to find a direct product decomposition $ \hat{G} = U \times V $ such that none of the $\chi_i$ lies in $U$ and $V$ is cyclic. Then we realise the claimed decomposition by taking $H = U^\bot$ and $K = V^\bot$, which yield that the $\chi_i \restriction_H$ are not trivial and that $V \simeq \hat{G}/U = \hat{G}/(H^\bot) \simeq \hat{H} \simeq H$ is cyclic. Since $\hat{G} \simeq G$, we pick $x,y \in \hat{G}$ such $|x| = m$ and $|y|=lm$, i.e. we have $ \hat{G} = \langle x \rangle \times \langle y \rangle = C_m \times C_{lm}$. 

If $\chi_1, \chi_2, \chi_3 \not \in C_m \times \{1\}$ we are done. 
Otherwise, we assume without loss of generality that $\chi_1 \in C_m \times \{1\}$. Then we obtain that $| \chi_1|= m$, and hence $| \chi_2| = lm = | \chi_3|$. Note that $S = \langle (x, y^l) \rangle$ has order $m$ and that $C_m \times \{1\} \neq S$, so $\chi_1$ can not be in $S$. Furthermore, $S$ intersects $ \{1 \} \times C_{lm}$ trivially, so we have a direct product decomposition of $C_m \times C_{lm}$ into $S$ and $\{ 1 \} \times C_{lm}$ with $\chi_1 \not \in S$. 

If $\chi_2, \chi_3 \not \in S$ we are done. Otherwise, we assume that $\chi_2 \in S$. Recall that $S$ has order $m$, so we must have $l=1$. Note that then the subgroup $\{ 1 \} \times C_{lm}$ has order $m$ and that it differs from both $S$ and $C_m \times \{1\}$. Therefore, we have $\chi_1, \chi_2 \not \in \{ 1 \} \times C_{lm}$. 

If $\chi_3 \not \in \{ 1 \} \times C_{lm}$, we are done.  Otherwise, we assume that $\chi_3 \in \{ 1 \} \times C_{lm}$. Then, we consider $T = \langle (x,y^{-l}) \rangle$, which has a direct complement $\{1 \} \times C_{lm}$. By construction it is clear that $\chi_1, \chi_3 \not \in T$. If $\chi_2 \in T$, we have $S = T$, so $m=2$ and therefore $|G| = m^2 l = 4$. But we assumed $|G| >4$, so we must have $\chi_2 \not \in T$ and we are done.  
\end{proof}

\section{Graded actions of direct factors}\label{Sec: Graded actions}
The purpose of this section is to show that \Cref{Theo: LeMeur} can be applied in the following way. Given a group $G = H \times K \leq \SL_d(k)$, note that 
\[ R \ast G = (R \ast H) \ast K,  \] 
where $K$ acts on $(R \ast H)$ via $g \cdot (r \otimes h) = g(r) \otimes h$ for $g \in K$. Suppose that $R \ast H$ has a higher preprojective cut. In particular, that means that $R \ast H = \Pi_d(\Lambda)$ is the $d$-preprojective algebra of the $(d-1)$-representation infinite algebra $\Lambda = (R \ast H)_0$. Then we would like to show that $K$ acts by graded automorphisms on $R \ast H$, since then $K$ acts on $\Lambda$ via automorphisms. Thus, by \Cref{Theo: LeMeur}, we then have that $\Lambda \ast K$ is also $(d-1)$-representation infinite, and that the corresponding higher preprojective algebra is 
\[ \Pi_d(\Lambda \ast K) = \Pi_d(\Lambda) \ast K=  (R \ast H) \ast K = R \ast G .  \]
This will hold true when all arrow spaces of $R \ast H$ are homogeneous, which is the case when $H$ is abelian. Recall that in this case, $R \ast H$ is basic.  

\begin{Lem}\label{Lem: Double arrows have same degree}
Let $G \leq \SL_d(k)$ be an abelian group and fix a higher preprojective cut on $R \ast G \simeq \Bbbk Q/I $, where the quiver is chosen so that every arrow is homogeneous for the preprojective grading. Then any two parallel arrows have the same degree, i.e. given $\alpha, \beta \in Q_1$ with $s(\alpha) = s(\beta)$ and $t(\alpha) = t(\beta)$ we have $\deg(\alpha) = \deg(\beta)$.  
\end{Lem}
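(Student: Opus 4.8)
The plan is to exploit two facts: that parallel arrows in the Cayley graph of $\hat{G}$ come from \emph{repeated} summands of the natural representation, and that the superpotential is homogeneous of degree $1$. First I would unwind what parallel arrows are. Writing the McKay quiver as the Cayley graph of $\hat{G}$ with respect to $T = \{\rho_1, \ldots, \rho_d\}$, an arrow of type $\rho_j$ out of a vertex $\psi$ is the unique arrow $\psi \to \psi \rho_j$. Hence if $\alpha, \beta \colon \chi \to \chi'$ are parallel, their types $\rho_l, \rho_m$ satisfy $\chi \rho_l = \chi' = \chi \rho_m$, so $\rho_l = \rho_m$ as characters (i.e.\ as elements of $\hat{G}$) although $l \neq m$ as summands of $V$. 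Thus parallel arrows are exactly repeated summands, and since pairwise equality of degrees gives the general statement, it suffices to treat one such pair $\alpha$ (type $\rho_l$), $\beta$ (type $\rho_m$).

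The key input is \Cref{Pro: Potential is homogeneous}: $\omega$ is homogeneous of preprojective degree $1$. Since the support of $\omega$ consists of \emph{all} $d$-cycles made of $d$ pairwise distinct types, every such cycle has total degree exactly $1$. I would then run a swapping argument. Fix $\alpha, \beta$ at $\chi$, let $\gamma$ be the type-$\rho_m$ arrow and $\gamma'$ the type-$\rho_l$ arrow out of $\chi' = \chi \rho_l = \chi \rho_m$ (these are parallel, both ending at $\chi' \rho_m$), and choose arrows $b_1, \ldots, b_{d-2}$ of the remaining $d-2$ types completing a cycle back to $\chi$. Then
\[ C_\alpha = \alpha\, \gamma\, b_1 \cdots b_{d-2} \quad\text{and}\quad C_\beta = \beta\, \gamma'\, b_1 \cdots b_{d-2} \]
are both cycles of $d$ distinct types visiting the same vertices (the vertex sequence is unchanged under the swap because $\rho_l = \rho_m$ as characters), so both lie in the support and have degree $1$. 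As they share all arrows except $\alpha,\gamma$ versus $\beta,\gamma'$, subtracting gives $\deg(\alpha) - \deg(\beta) = \deg(\gamma') - \deg(\gamma)$. Writing $f(\psi)$ for $\deg(\text{type-}\rho_l \text{ arrow at }\psi) - \deg(\text{type-}\rho_m\text{ arrow at }\psi)$, this reads $f(\chi) = f(\chi\rho_m)$, so $f$ is constant along the whole $\rho_m$-orbit.

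The finish uses finite-dimensionality of $\Lambda = (R \ast G)_0$. Let $r = \mathrm{ord}(\rho_m)$ and run once around the $\rho_m$-cycle $\chi, \chi\rho_m, \ldots, \chi\rho_m^{r-1}$, with $\delta_i$ the type-$\rho_m$ and $\epsilon_i$ the type-$\rho_l$ arrow at step $i$. Constancy of $f$ gives $\deg(\epsilon_i) - \deg(\delta_i) = c$ for a fixed $c$, hence $\sum_i \deg(\epsilon_i) - \sum_i \deg(\delta_i) = rc$. The closed path $\delta_0 \cdots \delta_{r-1}$ corresponds to the nonzero element $X_m^r$, so if all $\delta_i$ had degree $0$ then $\Lambda$ would contain the infinitely many degree-$0$ elements $X_m^{rk}$, contradicting $\dim_\Bbbk \Lambda < \infty$; thus $1 \le \sum_i \deg(\delta_i) \le r$, and likewise $1 \le \sum_i \deg(\epsilon_i) \le r$. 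If $c \ge 1$ then $\sum_i \deg(\epsilon_i) = \sum_i \deg(\delta_i) + rc \ge 1 + r > r$, which is impossible, and $c \le -1$ is symmetric; hence $c = 0$ and $\deg(\alpha) = \deg(\beta)$. I expect the main obstacle to be the middle step — producing the two support cycles that differ only by the $\rho_l \leftrightarrow \rho_m$ swap and checking they are genuinely supported — after which the homogeneity of $\omega$ and the counting argument close the proof.
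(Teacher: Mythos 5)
Your proof is correct and follows essentially the same route as the paper's: both exploit that parallel arrows are repeated character summands, swap the two repeated types inside a degree-$1$ superpotential cycle to propagate the degree relation along the common $\rho_l=\rho_m$-orbit, and then rule out the bad case because a single-type cycle cannot lie entirely in degree $0$ by finite-dimensionality of $(R\ast G)_0$. The only difference is cosmetic bookkeeping at the end — you track the constant difference $c$ and count, whereas the paper assumes $\deg(\alpha)=0$, $\deg(\beta)=1$ outright and exhibits the all-degree-$0$ single-type cycle directly.
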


\begin{proof}
By \Cref{Pro: Potential is homogeneous}, the superpotential $\omega$ on $R \ast G$ is homogeneous of preprojective degree $1$. Recall from \Cref{SSec:Cayley graphs} that since $G$ is abelian, the superpotential $\omega$ is a linear combination of all cycles of length $d$ consisting of arrows of $d$ distinct types. Suppose now that $\deg(\alpha) = 0$ and $\deg(\beta)=1$. We know that $\beta$ can be completed into a cycle $\beta \alpha_2 \gamma_3 \cdots \gamma_d$ which is in $\supp(\omega)$, and we can choose it so that $\alpha_2$ and $\alpha$ have the same type. Since $\deg(\beta) = 1$, we know that $\deg(\gamma_i) = 0$ for $3 \leq i \leq d$ and $\deg(\alpha_2) = 0$. Furthermore, there is an arrow $\beta_2$ parallel to $\alpha_2$ and which has the same type as $\beta$. Hence, the cycle $\alpha \beta_2 \gamma_3 \cdots \gamma_d$ is also in $\supp(\omega)$. Since all other arrows in this cycle have degree $0$, we see that $\deg(\beta_2)=1$. Since $Q$ is finite, we can repeat the argument until we find that there is a cycle $\alpha \alpha_2 \cdots \alpha_N$ for some $N > 0$ which is homogeneous of degree $0$. Since all involved arrows $\alpha_i$ have the same type, this cycle is not nilpotent, hence it generates an infinite-dimensional subalgebra of $(R \ast G)_0$. Hence, the grading is not higher preprojective. 
\end{proof}

\begin{Pro}\label{Prop:Lifting by abelian}
Let $G = H \times K \leq \SL_d(k)$ and fix a higher preprojective cut on $R \ast H$. Then $K$ acts on $R \ast H$ via $g \cdot (r \otimes h) = g(r) \otimes h$ for $g \in K$. If $H$ is abelian, this action is graded. 
\end{Pro}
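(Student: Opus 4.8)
The plan is to prove the two assertions in turn: first that $\phi_g(r \otimes h) = g(r)\otimes h$ defines an action of $K$ on $R \ast H$ by algebra automorphisms, and then that when $H$ is abelian each $\phi_g$ is a graded automorphism for the preprojective cut. For the first assertion, I would check multiplicativity directly: expanding
\[ \phi_g\big((r_1 \otimes h_1)(r_2 \otimes h_2)\big) = g(r_1)\,g(h_1(r_2)) \otimes h_1 h_2 \]
and comparing with $\phi_g(r_1\otimes h_1)\,\phi_g(r_2\otimes h_2) = g(r_1)\,h_1(g(r_2)) \otimes h_1 h_2$, the two agree precisely because $g$ and $h_1$ commute as automorphisms of $R$, which holds since $H$ and $K$ commute inside $G = H \times K \leq \SL_d(k)$ and act on $R$ through the (linear) change of variables. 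The relation $\phi_{g_1}\phi_{g_2} = \phi_{g_1 g_2}$ is immediate from associativity of the $K$-action on $R$. Note that this part uses only that $H$ and $K$ commute, not that $H$ is abelian.

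For the graded statement, the key reduction is that $\phi_g$ acts trivially on the base ring: since $g$ fixes constants, $\phi_g(1 \otimes h) = 1 \otimes h$, so $\phi_g$ fixes $kH$ pointwise, in particular all vertex idempotents $e_\chi$, and therefore preserves the source and target of every arrow. Since $R \ast H$ is generated over $kH$ in Koszul degree $1$ by the arrow space $V \otimes kH$, on which $\phi_g$ acts as $g \otimes \mathrm{id}$, it suffices to show that $g \otimes \mathrm{id}$ preserves the cut $V \otimes kH = (V \otimes kH)_0 \oplus (V \otimes kH)_1$; being an algebra automorphism that preserves the preprojective degrees of the generators, $\phi_g$ is then automatically graded.

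This is where the abelian hypothesis on $H$ enters. Decompose $V$ into $H$-weight spaces $V = \bigoplus_{\chi \in \hat H} V_\chi$. Since $g \in K$ commutes with every $h \in H$, a one-line check shows $g$ stabilises each $V_\chi$: for $v \in V_\chi$ we have $h\cdot(g\cdot v) = g\cdot(h\cdot v) = \chi(h)\,(g\cdot v)$. As recalled in \Cref{SSec:Cayley graphs}, for a fixed vertex $\psi$ the arrows $\psi \to \psi \otimes \chi$ are identified with $V_\chi$ (the summands $\rho_l \cong \chi$ of $V$), and $\phi_g$ acts on this space of parallel arrows through $g|_{V_\chi}$, hence stabilises it. By \Cref{Lem: Double arrows have same degree}, all these parallel arrows carry a common preprojective degree, so this subspace lies entirely in one graded piece. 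Letting $\psi$ and $\chi$ range over all vertices and weights exhibits each of $(V \otimes kH)_0$ and $(V \otimes kH)_1$ as a sum of $\phi_g$-invariant subspaces, so $\phi_g$ respects the cut and is graded.

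The main obstacle, and the precise reason the abelian hypothesis is needed, is the potential mixing of a degree-$0$ arrow with a parallel degree-$1$ arrow: a priori $g \otimes \mathrm{id}$ could recombine two distinct summands $\rho_l, \rho_{l'}$ of $V$ of the same weight $\chi$ but lying in different preprojective degrees. This is exactly excluded by \Cref{Lem: Double arrows have same degree}, which forces the entire weight space $V_\chi$ of parallel arrows to be homogeneous, so the internal permutation that $g$ may perform on it is harmless. Everything else is bookkeeping, with commutativity used once for well-definedness (of $H$ with $K$) and once for the stability of the weight spaces under $g$.
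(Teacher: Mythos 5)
Your proposal is correct and follows essentially the same route as the paper: both arguments rest on the observations that $K$ fixes the vertex idempotents $1 \otimes e$, and that by \Cref{Lem: Double arrows have same degree} each space of parallel arrows (your $H$-weight space $V_\chi$, the paper's $e(V)f$) is homogeneous for the cut, so the $K$-stable decomposition of the arrow space refines the grading. The only addition is your explicit verification that $\phi_g$ is multiplicative, which the paper takes for granted.
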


\begin{proof}
We first note that $K$ fixes the idempotents of $R \ast H$, as these arise precisely as $1 \otimes e$ for some idempotent $e \in \Bbbk H$. Indeed, it is clear that $g \cdot (1 \otimes e) = (g(1) \otimes e) = (1 \otimes e)$ for any $g \in K$. Then, consider the spaces of homogeneous arrows $V_i = (V \otimes \Bbbk H)_i$ for $i=0,1$. Further, fix homogeneous primitive idempotents $(1\otimes e),(1 \otimes f) \in R \ast  H$. By \Cref{Lem: Double arrows have same degree}, one of the spaces $e(V_0)f$ and $e(V_1)f$ is zero, so we may write $e(V)f = e(V_j)f$ for some fixed $j$. Since $K$ commutes with idempotents, we have $ K(e(V_j)f) = K(e(V)f) = e(K(V))f  = e(V)f = e(V_j)f $. Thus, $K$ preserves the space of homogeneous elements $e(V_j)f$, and since $e$ and $f$ were arbitrary, $K$ preserves $V_0$ and $V_1$, which means the action is graded.  
\end{proof}

\begin{Cor} \label{Cor: Lifting strategy}
    Let $G = H \times K \leq \SL_d(k)$ such that $H$ is abelian. If there exists a higher preprojective cut on $R \ast H$, then there exists a higher preprojective cut on $R \ast G$. 
\end{Cor}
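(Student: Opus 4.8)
The plan is to carry out the strategy outlined at the start of this section: use \Cref{Prop:Lifting by abelian} to turn the cut on $R \ast H$ into a graded $K$-action, and then transport the higher preprojective structure through the skewing by $K$ via \Cref{Theo: LeMeur}. First I would unpack the hypothesis. A higher preprojective cut on $R \ast H$ realises it as $R \ast H = \Pi_d(\Lambda)$ with $\Lambda = (R \ast H)_0$ the degree $0$ part of the preprojective grading. By \Cref{Theo: HPG is f.d. GP1}, the algebra $\Lambda$ is finite-dimensional and $(d-1)$-representation infinite, so in particular $\gdim \Lambda \leq d-1$. Since $H$ is abelian, \Cref{Prop:Lifting by abelian} shows that $K$ acts on $R \ast H$ by graded automorphisms; as a graded automorphism preserves each homogeneous component, $K$ preserves $\Lambda = (R \ast H)_0$ and thus acts on $\Lambda$ by algebra automorphisms.

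Next I would apply \Cref{Theo: LeMeur} with $K$ as the acting group. Since $\chr \Bbbk = 0$, the hypothesis $\chr \Bbbk \nmid |K|$ holds, so the theorem applies to the finite-dimensional algebra $\Lambda$ with its $K$-action. Part (1) yields that $\Lambda \ast K$ is again $(d-1)$-representation infinite, and part (2) yields a graded isomorphism
\[ \Pi_d(\Lambda \ast K) \simeq \Pi_d(\Lambda) \ast K = (R \ast H) \ast K = R \ast G, \]
the last identity being the one recalled at the start of this section. This endows $R \ast G$ with a higher preprojective grading, the skewed grading, whose degree $0$ part is $\Lambda \ast K$.

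It remains to verify that this skewed grading is a cut with respect to the Koszul (polynomial) grading on $R \ast G$, which I expect to be the only delicate point. The Koszul grading on $R \ast G$ extends the one on $R \ast H$ and places $\Bbbk K$ in Koszul degree $0$, since $K$ acts by degree-preserving linear automorphisms of $V$; hence its Koszul degree $1$ part is $\Gamma_1 = (V \otimes \Bbbk H) \otimes \Bbbk K = V \otimes \Bbbk G$. Because $K$ acts by graded automorphisms, it preserves the cut decomposition $V \otimes \Bbbk H = (V \otimes \Bbbk H)_0 \oplus (V \otimes \Bbbk H)_1$, and since the skewed grading places $\Bbbk K$ in preprojective degree $0$, the component $\Gamma_1$ inherits the homogeneous decomposition
\[ \Gamma_1 = \big( (V \otimes \Bbbk H)_0 \otimes \Bbbk K \big) \oplus \big( (V \otimes \Bbbk H)_1 \otimes \Bbbk K \big) \]
into preprojective degrees $0$ and $1$. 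Finally, using $\Lambda = \langle \Bbbk H, (V \otimes \Bbbk H)_0 \rangle$ and $\Bbbk G = \Bbbk H \ast K$, I would check that the subalgebra generated by $\Gamma_0 = \Bbbk G$ together with $(\Gamma_1)_0 = (V \otimes \Bbbk H)_0 \otimes \Bbbk K$ is exactly $\Lambda \ast K$, the degree $0$ part of the skewed grading. This confirms the cut condition. The main obstacle is this final compatibility check between the Koszul grading and the skewed preprojective grading; everything preceding it is a direct assembly of \Cref{Prop:Lifting by abelian} and \Cref{Theo: LeMeur}.
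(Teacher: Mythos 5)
Your proposal is correct and follows essentially the same route as the paper: the paper's proof is a one-line application of \Cref{Prop:Lifting by abelian} and \Cref{Theo: LeMeur} to $R \ast G \simeq (R \ast H) \ast K$, which is exactly the assembly you carry out. Your final verification that the skewed grading is genuinely a \emph{cut} with respect to the Koszul grading (rather than merely a higher preprojective grading) is a detail the paper leaves implicit, and spelling it out is a reasonable addition rather than a deviation.
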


\begin{proof}
    Apply \Cref{Prop:Lifting by abelian} and \Cref{Theo: LeMeur} to $R \ast G \simeq (R \ast H) \ast K$. 
\end{proof}

\section{Classifying 3-preprojective structures of type \texorpdfstring{$\Tilde{A}$}{A~}} \label{Sec: Classification1}
We are now ready to give the complete classification of $3$-preprojective algebras of type $\Tilde{A}$. Recall that type $\Tilde{A}$ was defined in \cite{HIO} to be exactly those $(d-1)$-representation infinite algebras whose higher preprojective algebras are of the form $R \ast G$ for some abelian group $G$. 

\begin{Theo}\label{Theo:SL3 Classification}
Let $G \leq \SL_3(k)$ be abelian, acting on $R = \Bbbk[X_1, X_2, X_3]$ via change of variables. Then $R \ast G$ can be endowed with a higher preprojective grading if and only if $G$ is not isomorphic to $C_2 \times C_2$ and the embedding of $G$ into $\SL_3(k)$ does not factor through any embedding of $\SL_1(k) \times \SL_2(k)$ into $\SL_3(k)$.  
\end{Theo}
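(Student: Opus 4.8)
The plan is to prove the biconditional by establishing the two implications separately, working throughout with the McKay quiver of $G$ and the superpotential description recalled in \Cref{Sec:skew-group algebras}. The first thing to record is a translation of the geometric hypothesis into representation theory: writing the inclusion as $\rho = \chi_1 \oplus \chi_2 \oplus \chi_3$ with each $\chi_i$ one-dimensional, the embedding factors through some $\SL_1(k) \times \SL_2(k) \hookrightarrow \SL_3(k)$ if and only if one of the $\chi_i$ is trivial. Indeed, a $G$-invariant splitting $V = L \oplus P$ with $\dim_k L = 1$ and $\det(\rho|_L) = \det(\rho|_P) = 1$ forces $G$ to act trivially on the line $L$, so $L$ lies in $V^G$, which for a diagonal group is the span of the coordinate lines indexed by the trivial $\chi_i$; conversely a trivial summand yields such a splitting. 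After this reduction both hypotheses become purely combinatorial conditions on the exponents of a generator.

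For the ``only if'' direction I would argue contrapositively and split into the two excluded cases. If the embedding factors through $\SL_1(k) \times \SL_2(k)$, then \Cref{Theo:TCrit} applies verbatim and forbids any grading of Gorenstein parameter $1$ with finite-dimensional degree-zero part; since Thibault's criterion is phrased at the level of gradings rather than cuts, nothing further is needed. The case $G \simeq C_2 \times C_2$ is the explicit obstruction computed in \Cref{Ex:Klein-Group-Ungradeable}. The delicate point, which I expect to be the main obstacle, is that the example literally rules out higher preprojective \emph{cuts} on the standard McKay quiver, whereas the theorem asserts the nonexistence of any higher preprojective \emph{grading}. To bridge this I would either invoke the Koszul-compatibility framework around \Cref{Ques:Compatibility of Koszul and HPPG} to reduce an arbitrary grading to a cut after a change of basis, or, more robustly, re-run the argument of the example for an arbitrary Gorenstein-parameter-$1$ grading: homogeneity of the superpotential $\omega$ forces, in each of the symmetric cases, one of the three doubled subquivers to remain entirely in degree $0$ and hence to generate an infinite-dimensional subalgebra, independently of the chosen presentation.

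For the ``if'' direction I would construct a cut explicitly, distinguishing the cyclic and non-cyclic cases. When $G = C_m$ is cyclic and does not factor through $\SL_1(k) \times \SL_2(k)$, a generator $\tfrac{1}{n}(e_1, e_2, e_3)$ has all $e_i \not\equiv 0 \pmod n$; the determinant condition then gives $\sum_i e_i \in \{n, 2n\}$, and replacing the generator by its inverse if necessary (as in \Cref{Rem:SL3-Cyclic-AIR possible}) we may assume $e_i > 0$ and $\sum_i e_i = n$, so that \Cref{Theo:AIR cuts} produces an AIR-cut. When $G$ is non-cyclic and not isomorphic to $C_2 \times C_2$, it has the form $C_{m_1} \times C_{m_2}$ with $|G| > 4$, and the non-factoring hypothesis means no $\chi_i$ is trivial. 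I would apply \Cref{Lem:Factor_Decomp} to write $G = H \times K$ with $H$ cyclic and every $\chi_i|_H$ nontrivial; since $\{\chi_1, \chi_2\}$ generate $\hat G$, their restrictions generate $\hat H$, so by \Cref{Lem:Faithful_Iff_Generating} the action of $H$ is faithful and the cyclic case supplies an AIR-cut on $R \ast H$. Finally \Cref{Cor: Lifting strategy} lifts this cut along $R \ast G \simeq (R \ast H) \ast K$ to a higher preprojective cut on $R \ast G$.

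Beyond the $C_2 \times C_2$ subtlety already flagged, the sufficiency direction is essentially bookkeeping once \Cref{Lem:Factor_Decomp} is available; the only verification worth making explicit is that its hypothesis $|G| > 4$ is automatic in our setting, since the unique non-cyclic abelian group of order at most $4$ is exactly the excluded $C_2 \times C_2$.
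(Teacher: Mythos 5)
Your proof follows the paper's own argument essentially verbatim: \Cref{Theo:TCrit} and the computation of \Cref{Ex:Klein-Group-Ungradeable} for necessity, and the AIR-cut on a cyclic factor combined with \Cref{Lem:Factor_Decomp} and \Cref{Cor: Lifting strategy} for sufficiency, with the same reduction of the factoring hypothesis to the triviality of some $\chi_i$. The one point where you go beyond the paper is in flagging that the $C_2 \times C_2$ example only excludes higher preprojective \emph{cuts} rather than arbitrary gradings --- a subtlety the paper's proof leaves implicit --- and your proposed fix of rerunning the superpotential-homogeneity argument for an arbitrary Gorenstein-parameter-$1$ grading is a sensible way to close that gap.
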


\begin{proof}
If the embedding of $G$ into $\SL_3(k)$ factors through some embedding of $\SL_1(k) \times \SL_2(k)$ into $\SL_3(k)$, we know by Thibault's criterion \ref{Theo:TCrit} that $R \ast G$ can not be endowed with a higher preprojective grading. As seen in \Cref{Ex:Klein-Group-Ungradeable}, the same is true for $G \simeq C_2 \times C_2$. 

Now, suppose that we are in neither of the two cases above. Denote by $\rho = \chi_1 \oplus \chi_2 \oplus \chi_3$ the given embedding of $G$ in $\SL_3(k)$. By \Cref{Lem:Number_Of_Cyclic_Factors} we have that $G$ is cyclic or of the form $C_m \times C_{lm}$. Recall that the assumption of $\rho$ not factoring trough an embedding $\SL_1(k) \times \SL_2(k) \hookrightarrow \SL_3(k)$ means that none of the $\chi_i$ are trivial. Writing this out for the case of $G \simeq C_n$ being cyclic, we find that $G$ has a generator $\frac{1}{n}(e_1, e_2, e_3)$ with all $e_i$ nonzero. As pointed out in \Cref{Rem:SL3-Cyclic-AIR possible}, we can therefore find an AIR-grading on $R \ast G$. If $G$ however is not cyclic, we have by assumption that $|G| > 4$, so we apply \Cref{Lem:Factor_Decomp} to decompose $G \simeq H \times K$ such that $H$ is cylic and none of the $\chi_i \restriction_H$ is trivial. That means we can find an AIR-grading on $R \ast H$. To complete the proof, we apply \Cref{Cor: Lifting strategy} to the AIR-grading on $R \ast H$. The skewed grading of the AIR-grading is therefore a higher preprojective grading of $R \ast G \simeq (R \ast H) \ast K$. 
\end{proof}

\begin{Rem}
One can describe the higher preprojective gradings constructed in the proof of \Cref{Theo:SL3 Classification} explicitly using the McKay quiver as follows. Fix an abelian non-cyclic group $G \leq \SL_3(k)$ for which we know that a higher preprojective grading on $R \ast G$ exists. After decomposing the given embedding into a direct sum of linear characters $\rho = \chi_1 \oplus \chi_2 \oplus \chi_3$ we have that the McKay quiver $Q_\rho$ of $ (G, \rho)$ is the same as the Cayley graph of $(\hat{G}, \{ \chi_1, \chi_2, \chi_3 \})$. Restricting $\rho$ to $H \leq G$ corresponds to passing to the quotient $\hat{G}/H^\bot$. The Cayley graph of $(\hat{G}/H^\bot, \{\chi_1 \cdot H^\bot, \chi_2 \cdot H^\bot, \chi_3 \cdot H^\bot \})$ is obtained by taking the quotient of $Q_\rho$ by the action of $H^\bot$. We therefore have a projection $p \colon Q_\rho \to Q_{\rho \restriction_H}$, and a higher preprojective grading on $Q_\rho$ is the pullback of an AIR-grading on $Q_{\rho \restriction_H}$ along $p$. 
\end{Rem}

\begin{Exp}
    Choose a primitive third root of unity $\zeta$, and consider $g = \frac{1}{3} (2,1,0) $ and $ h= \frac{1}{3}(0,1,2) $. The group $G = \langle g,h \rangle \leq \SL_3(k) $ is abstractly $G \simeq C_3 \times C_3$, so we label its irreducible representations by pairs $\{ (i,j) \mid 0 \leq i,j \leq 2 \}$ so that a pair $(i,j)$ corresponds to the representation $(g^a, h^b) \mapsto \zeta^{ia + jb}$. In particular, the representation given by the embedding decomposes as $\rho = (2,0) \oplus (1,1) \oplus (0,2)$. The McKay quiver is then given by the following quiver, where the repeated vertices and arrows are identified. 
    \[ 
    \begin{tikzpicture}[-Stealth]
        \begin{scope}[rotate=-15.4,inner sep=1.5mm]
        \begin{scriptsize}

        \coordinate (O) at (0, 0, 0);
        \coordinate (X) at (1,0,-1);
        \coordinate (Y) at (0, -1, 1);
        \coordinate (Z) at (-1, 1, 0);

        \node(A0) at (O) {(0,0)};
        \node(A1) at (X) {(2,0)};
        \node(A2) at ($2 *(X)$) {(1,0)};
        \node(A3) at ($3 *(X)$) {(0,0)};
        \node(B0) at ($(O) + (Y)$) {(1,1)} ;
        \node(B1) at ($(X) + (Y)$) {(0,1)};
        \node(B2) at ($2 *(X) + (Y)$) {(2,1)};
        \node(B3) at ($3 *(X) + (Y)$) {(1,1)};
        \node(C0) at ($(O) + 2 *(Y)$) {(2,2)} ;
        \node(C1) at ($(X) + 2 *(Y)$) {(1,2)};
        \node(C2) at ($2 *(X) + 2 *(Y)$) {(0,2)};
        \node(C3) at ($3 *(X) + 2 *(Y)$) {(2,2)};
        \node(D0) at ($(O) + 3 *(Y)$) {(0,0)} ;
        \node(D1) at ($(X) + 3 *(Y)$) {(2,0)};
        \node(D2) at ($2 *(X) + 3 *(Y)$) {(1,0)};
        \node(D3) at ($3 *(X) + 3 *(Y)$) {(0,0)};

        \draw(A0) -- (A1);
        \draw(A1) -- (A2);
        \draw(A2) -- (A3);
        \draw(B0) -- (B1);
        \draw(B1) -- (B2);
        \draw(B2) -- (B3);
        \draw(C0) -- (C1);
        \draw(C1) -- (C2);
        \draw(C2) -- (C3);
        \draw(D0) -- (D1);
        \draw(D1) -- (D2);
        \draw(D2) -- (D3);

        \draw(A0) -- (B0);
        \draw(A1) -- (B1);
        \draw(A2) -- (B2);
        \draw(A3) -- (B3);
        \draw(B0) -- (C0);
        \draw(B1) -- (C1);
        \draw(B2) -- (C2);
        \draw(B3) -- (C3);
        \draw(C0) -- (D0);
        \draw(C1) -- (D1);
        \draw(C2) -- (D2);
        \draw(C3) -- (D3);

        \draw(B1) -- (A0);
        \draw(B2) -- (A1);
        \draw(B3) -- (A2);
        \draw(C1) -- (B0);
        \draw(C2) -- (B1);
        \draw(C3) -- (B2);
        \draw(D1) -- (C0);
        \draw(D2) -- (C1);
        \draw(D3) -- (C2);

        \end{scriptsize}
        \end{scope}

    \end{tikzpicture}
    \]
We note that neither of the two obvious cyclic subgroups $\langle g \rangle$ and $\langle h \rangle$ satisfies the conditions to afford an AIR-cut. However, we know from \Cref{Theo:SL3 Classification} that $R \ast G$ must have a higher preprojective cut. Indeed, the subgroup $H$ generated by $(gh)^{-1} = \frac{1}{3} (1,1,1)$ affords an AIR-cut, and has a direct complement $K$ generated by $\frac{1}{3}(1,2,0)$. The relevant orthogonal group is $H^\bot = \langle \frac{1}{3} (1,1,1) \rangle^\bot = \langle (1,2) \rangle$. We take the quotient of the McKay quiver with respect to the action of this group and obtain the quiver we already saw in \Cref{Ex: C3-AIR}.
\[\begin{tikzcd}
	& 1 \\
	0 && 2
	\arrow[shift left=2, from=2-1, to=1-2]
	\arrow[shift right=2, from=2-1, to=1-2]
	\arrow[from=2-1, to=1-2]
	\arrow[shift left=2, from=1-2, to=2-3]
	\arrow[shift right=2, from=1-2, to=2-3]
	\arrow[from=1-2, to=2-3]
	\arrow[shift left=2, from=2-3, to=2-1]
	\arrow[shift right=2, from=2-3, to=2-1]
	\arrow[from=2-3, to=2-1]
\end{tikzcd}\]
We choose the same grading as in \Cref{Ex: C3-AIR}, i.e. we place the arrows $2 \to 0$ in degree $1$. The orbits of the $H^\bot$-action on the vertices of the McKay quiver are given by 
\begin{align*}
    (0,0) \cdot H^\bot &= \{ (0,0), (1,2), (2,1) \}, \\
    (1,1) \cdot H^\bot &= \{ (1,1), (2, 0), (0,2) \}, \\
    (2,2) \cdot H^\bot &= \{ (2,2), (0,1), (1,0) \} , 
\end{align*}
so we see that placing the bold arrows in degree $1$ defines a higher preprojective grading on $R \ast G$. 
\[ 
    \begin{tikzpicture}[-Stealth]
        \begin{scope}[rotate=-15.4,inner sep=1.5mm]
        \begin{scriptsize}

        \coordinate (O) at (0, 0, 0);
        \coordinate (X) at (1,0,-1);
        \coordinate (Y) at (0, -1, 1);
        \coordinate (Z) at (-1, 1, 0);

        \node(A0) at (O) {(0,0)};
        \node(A1) at (X) {(2,0)};
        \node(A2) at ($2 *(X)$) {(1,0)};
        \node(A3) at ($3 *(X)$) {(0,0)};
        \node(B0) at ($(O) + (Y)$) {(1,1)} ;
        \node(B1) at ($(X) + (Y)$) {(0,1)};
        \node(B2) at ($2 *(X) + (Y)$) {(2,1)};
        \node(B3) at ($3 *(X) + (Y)$) {(1,1)};
        \node(C0) at ($(O) + 2 *(Y)$) {(2,2)} ;
        \node(C1) at ($(X) + 2 *(Y)$) {(1,2)};
        \node(C2) at ($2 *(X) + 2 *(Y)$) {(0,2)};
        \node(C3) at ($3 *(X) + 2 *(Y)$) {(2,2)};
        \node(D0) at ($(O) + 3 *(Y)$) {(0,0)} ;
        \node(D1) at ($(X) + 3 *(Y)$) {(2,0)};
        \node(D2) at ($2 *(X) + 3 *(Y)$) {(1,0)};
        \node(D3) at ($3 *(X) + 3 *(Y)$) {(0,0)};

        \draw(A0) -- (A1);
        \draw(A1) -- (A2);
        \draw[ultra thick](A2) -- (A3);
        \draw(B0) -- (B1);
        \draw[ultra thick](B1) -- (B2);
        \draw(B2) -- (B3);
        \draw[ultra thick](C0) -- (C1);
        \draw(C1) -- (C2);
        \draw(C2) -- (C3);
        \draw(D0) -- (D1);
        \draw(D1) -- (D2);
        \draw[ultra thick](D2) -- (D3);

        \draw(A0) -- (B0);
        \draw(A1) -- (B1);
        \draw[ultra thick](A2) -- (B2);
        \draw(A3) -- (B3);
        \draw(B0) -- (C0);
        \draw[ultra thick](B1) -- (C1);
        \draw(B2) -- (C2);
        \draw(B3) -- (C3);
        \draw[ultra thick](C0) -- (D0);
        \draw(C1) -- (D1);
        \draw(C2) -- (D2);
        \draw[ultra thick](C3) -- (D3);

        \draw[ultra thick](B1) -- (A0);
        \draw(B2) -- (A1);
        \draw(B3) -- (A2);
        \draw(C1) -- (B0);
        \draw(C2) -- (B1);
        \draw[ultra thick](C3) -- (B2);
        \draw(D1) -- (C0);
        \draw[ultra thick](D2) -- (C1);
        \draw(D3) -- (C2);

        \end{scriptsize}
        \end{scope}

    \end{tikzpicture}
    \]
    Removing the arrows in degree $1$ and rearranging the quiver shows that the quiver of the $2$-representation infinite algebra $\Lambda$ we found is $2$-levelled and given by the following. 
    \[\begin{tikzcd}
	0 & 1 & 2 \\
	{0'} & {1'} & {2'} & {} \\
	{0''} & {1''} & {2''}
	\arrow[from=1-1, to=1-2]
	\arrow[from=1-2, to=1-3]
	\arrow[from=1-1, to=2-2]
	\arrow[from=1-1, to=3-2]
	\arrow[from=2-1, to=1-2]
	\arrow[from=2-1, to=2-2]
	\arrow[from=2-1, to=3-2]
	\arrow[from=3-1, to=1-2]
	\arrow[from=3-1, to=2-2]
	\arrow[from=3-1, to=3-2]
	\arrow[from=2-2, to=2-3]
	\arrow[from=1-2, to=2-3]
	\arrow[from=1-2, to=3-3]
	\arrow[from=2-2, to=1-3]
	\arrow[from=2-2, to=3-3]
	\arrow[from=3-2, to=1-3]
	\arrow[from=3-2, to=2-3]
	\arrow[from=3-2, to=3-3]
\end{tikzcd}\]
Let us also note the following. The cyclic group $C_3$ acts on the quiver of $\Lambda$ by permutating the vertices and arrows in each level. One can compute the skew-group algebra $\Lambda \ast C_3$ with respect to this action and see that this is no longer basic, but Morita equivalent to the Beilinson-algebra which is the degree $0$-part of the AIR-cut we saw earlier in this example. Indeed, this is to be expected: We may write $R \ast G = (R \ast H) \ast K$, where $(R \ast H)$ admits an AIR-cut. Skewing by the dual $\hat{K}$ of $K$ in its corresponding dual representation ``unskews'' the action, see \cite[Corollary 5.2]{ReitenRiedtmann}, and we are left with $((R \ast H) \ast K) \ast \hat{K} \simeq (R \ast H) \otimes (\Bbbk K \otimes \Bbbk \hat{K}) \simeq_M (R \ast H)$. All the involved actions and equivalences can be chosen to respect the higher preprojective gradings. 
\end{Exp}

\begin{Rem}\label{Rem: General HPG strategy}
The strategy from \Cref{Cor: Lifting strategy} can be applied to construct $d$-preprojective algebras of type $\Tilde{A}$ for $d \geq 3$. The main obstacle in generalising our classification seems to be that it is unknown for arbitrary cyclic groups when their corresponding skew-group algebras can be endowed with a higher preprojective grading. 
\end{Rem}

\begin{Exp}
    Consider the group $G = \langle \frac{1}{11}(1,1,5,7,8) \rangle \leq \SL_5(k)$ acting on $R = \Bbbk[X_1, \ldots, X_5]$. A direct computer search shows that this is the smallest cyclic group of prime order which does not satisfy the conditions from \Cref{Theo:AIR cuts} and neither those from \Cref{Theo:TCrit}, so we can not directly determine whether $R \ast G$ affords a higher preprojective cut. We label the vertices of the McKay quiver by $\{0, 1, \ldots, 10 \}$ and label the arrows $i \xrightarrow[]{j} i+e_j $ with their type $j$, where $\frac{1}{11}(e_1, e_2, e_3, e_4, e_5) = \frac{1}{11}(1,1,5,7,8)$. If a higher preprojective cut exists, then some arrow of type $1$ needs to be in preprojective degree $1$, since otherwise the infinite-dimensional subalgebra $\Bbbk[X_1 \otimes 1]$ would remain in degree $0$. Without loss of generality, we place the arrow $0 \xrightarrow[]{1} 1$ in degree $1$. Then we consider two $5$-cycles which are in the support of the superpotential for $R \ast G$, given by 
    \begin{align*}
        &0 \xrightarrow[]{1} 1 \xrightarrow[]{2} 2 \xrightarrow[]{3} 7 \xrightarrow[]{4} 3 \xrightarrow[]{5} 0,\\
        &0 \xrightarrow[]{1} 1 \xrightarrow[]{4} 8 \xrightarrow[]{3} 2 \xrightarrow[]{5} 10 \xrightarrow[]{2} 0. 
    \end{align*}
    Since $0 \xrightarrow[]{1} 1$ has degree $1$, we see that $2 \xrightarrow[]{3} 7$ and $8 \xrightarrow[]{3} 2$ must have degree $0$. However, there is a $3$-cycle $c \colon 8 \xrightarrow[]{3} 2 \xrightarrow[]{3} 7 \xrightarrow[]{1} 8$, and from the shape of the superpotential it is clear that this cycle is not nilpotent. Therefore, one of its arrows needs to have preprojective degree $1$, since otherwise the infinite-dimensional algebra $\Bbbk[c]$ would be in degree $0$. Hence, from the assumption that $0 \xrightarrow[]{1} 1$ has degree $1$, we conclude that the same is true for $7 \xrightarrow[]{1} 8$. Iterating the argument we find that all arrows of type $1$ need to be of degree $1$, so we arrive at the conclusion that the grading places $X_1 \otimes 1$ in degree $1$. With this grading, $R \ast G$ already has Gorenstein parameter $1$, but its degree $0$ part is still infinite-dimensional. Hence $R \ast G$ can not be endowed with a higher preprojective cut. 
\end{Exp}

\section{Invariants and mutations}\label{Sec: Tilings}
Given the classification result from \Cref{Theo:SL3 Classification}, it is natural to ask how many different higher preprojective cuts a given skew-group algebra can be endowed with. Note that already in the case of AIR-cuts there is a choice of labelling involved, so different labellings give different cuts. However, the AIR-cuts for different labellings are all equivalent under \emph{cut mutation}. We will recall the mutation procedure and introduce certain mutation invariants. We will show that these invariants are sharp, and classify which values the invariants can take, thus classifying cuts up to mutation. By traversing the mutation class, one can compute all possible higher preprojective cuts, though the number of cuts grows rapidly with the size of $G$. With the necessary theory established, we also give an alternative proof of \Cref{Theo:SL3 Classification} which is independent of \cite{AIR} and \cite{LeMeur}. 

For this section, we consider the case of an abelian group $G \leq \SL_3(k)$ acting on $R = \Bbbk[X_1, X_2, X_3]$ such that $R \ast G$ has a higher preprojective cut. As pointed out in \Cref{SSec:Cayley graphs}, we can decompose the natural representation $\rho = \rho_1 \oplus \rho_2 \oplus \rho_3$ such that $R \ast G \simeq \Bbbk Q /I$ where the quiver $Q$ is the directed Cayley graph with multiplicities for the generating set $\{ \rho_1, \rho_2 , \rho_3 \}$ of $\hat{G}$. To simplify notation, we assume that $|G|=n$, and write $Q_0 = \{ 1, \ldots , n\}$. Further, every arrow $\alpha \colon i \rightarrow j$ in $Q$ is given by an isomorphism $\chi_i \otimes \rho_s \to \chi_j$ for a unique $s \in \{1,2,3 \}$. We will call $\theta(\alpha) = s$ the type of the arrow. Let us now characterise a higher preprojective cut on $R \ast G$ in terms of the corresponding grading on $kQ/I$. Recall that by definition every arrow in Q is homogeneous of degree 0 or 1 for any higher preprojective cut on $kQ/I$, so we can view $Q$ as graded. Furthermore, recall from \Cref{SSec:Cayley graphs} that the superpotential $\omega$ on $R \ast G \simeq kQ/I$ corresponds to a linear combination $\omega_Q$ of all $3$-cycles in $Q$ consisting of arrows of three distinct types. The ideal $I$ generated by the derivatives of $\omega_Q$ is then generated by commutativity relations between arrows of different types, i.e. any path $x_1 \xrightarrow{\alpha_1} x_2 \xrightarrow{\alpha_2} x_3$ consisting of arrows of distinct types $\theta(\alpha_1) \neq \theta(\alpha_2)$ is equal to the path $x_1 \xrightarrow{\beta_2} x_2' \xrightarrow{\beta_1} x_3$ where $\theta(\beta_i) = \theta(\alpha_i)$ for $i \in \{1,2\}$.  
  
\begin{Rem}
    With the above setup, we can identify a higher preprojective cut on $R \ast G \simeq \Bbbk Q/I$ with the set $C = \{ \alpha \in Q_1 \mid \deg(\alpha) = 1 \}  \subseteq Q_1 $ of arrows of higher preprojective degree $1$, which we subsequently also call a cut. 
\end{Rem}

\begin{Def}
    Fix a subset $C \subseteq Q_1$. We call the quiver $Q_C = (Q_0, Q_1 - C)$ with arrows from $C$ removed the \emph{cut  quiver} with respect to $C$. The cut algebra is then $(kQ/I)_C = kQ_C/(I \cap kQ_C)$.
\end{Def}

\begin{Def}\label{Def: Type of a Cut}
    With the above setup, let $C \subseteq Q_1$ be a higher preprojective cut. The triple 
    \[ \theta(C) = (\theta_1(C), \theta_2(C), \theta_3(C) ) = ( \# \{ \alpha \in C \mid \theta(\alpha) = s  \} )_{s \in \{ 1,2,3\} } \]
    is called the \emph{type} of $C$. 
\end{Def}

With this terminology, we now prove that a set $C \subseteq Q_1$ is a higher preprojective cut if and only if all $3$-cycles have exactly one arrow in $C$, and $C$ contains arrows of all types. See also \Cref{Rem: Tension between dim and GP} for how the two conditions interact. Note that we give a combinatorial proof of one of the implications in \Cref{Lem: Positive type is acyclic}. 

\begin{Pro} \label{Pro: HPPCut iff positive and homogeneous}
	A subset $C \subseteq Q_1$ is a higher preprojective cut if and only if the following two conditions are satisfied: 
\begin{enumerate}
	\item For every $3$-cycle $\alpha_1 \alpha_2 \alpha_3$ in $Q$ such that $\{ \theta(\alpha_i) \mid 1 \leq i \leq 3 \} = \{1,2,3\}$ there exists exactly one $i \in \{1,2,3\}$ such that $\alpha_i \in C$. 
    \item For every type $s \in \{1,2,3 \}$, there exists an arrow $\alpha \in C$ such that $\theta(\alpha) = s$. 
\end{enumerate}
\end{Pro}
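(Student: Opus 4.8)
The plan is to unwind the definition: $C$ is a higher preprojective cut exactly when the degree function sending $C$ to $1$ and $Q_1 \setminus C$ to $0$ makes $R \ast G$ into a graded bimodule $3$-Calabi--Yau algebra of Gorenstein parameter $1$ with finite-dimensional graded pieces (\Cref{Def: nCY-GPa}, \Cref{Theo: HPG is f.d. GP1}). Throughout I use that, $G$ being abelian, $\supp(\omega)$ consists of all $3$-cycles of three distinct types (\Cref{SSec:Cayley graphs}) and that $I$ is generated by the commutativity relations obtained by completing a length-$2$ path of two distinct types to such a $3$-cycle. I would prove the two implications separately.

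For ``$\Rightarrow$'', assume $C$ is a cut. Condition (1) follows from \Cref{Pro: Potential is homogeneous}: $\omega$ is homogeneous of preprojective degree $1$, so for $\alpha_1\alpha_2\alpha_3 \in \supp(\omega)$ one has $\deg(\alpha_1)+\deg(\alpha_2)+\deg(\alpha_3)=1$ with summands in $\{0,1\}$, forcing exactly one $\alpha_i \in C$. For condition (2) I argue by contraposition: if $C$ omits some type $s$, every type-$s$ arrow has degree $0$, and following type-$s$ arrows from a vertex $\chi$ returns to $\chi$ after $\mathrm{ord}(\rho_s)$ steps, giving a cycle in $Q_C$ whose powers are the nonzero monomials $X_s^{m\,\mathrm{ord}(\rho_s)}$; this is non-nilpotent, so $(R\ast G)_0$ is infinite-dimensional, contradicting the cut hypothesis.

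For ``$\Leftarrow$'', assume (1) and (2). First I would check the degree function descends to $\Gamma = \Bbbk Q/I$: a relation $\alpha\beta = \beta'\alpha'$ closes up with the third type $u$ into two $3$-cycles $\alpha\beta\gamma,\ \beta'\alpha'\gamma \in \supp(\omega)$ sharing the arrow $\gamma$, and (1) applied to both yields $\deg(\alpha)+\deg(\beta) = \deg(\beta')+\deg(\alpha')$, so every relation is homogeneous. Running the degree count of the forward direction backwards, (1) gives that each support cycle has total degree $1$, i.e.\ $\omega$ is homogeneous of degree $1$; feeding this into the superpotential bimodule resolution of $R\ast G$ (\cite{BSW}) grades that self-dual resolution with duality shift $\deg(\omega)=1$, which is Gorenstein parameter $1$ (cf.\ \cite[Lemma 3.8]{AIR} and \Cref{Def: nCY-GPa}).

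It remains to obtain finite-dimensional graded pieces, and here lies the main obstacle. Since $\Gamma$ is generated in degrees $0$ and $1$ by finitely many arrows, it suffices that $\Gamma_0 = \Bbbk Q_C/(I \cap \Bbbk Q_C)$ be finite-dimensional, and because the relations are commutativity relations every oriented cycle in $Q_C$ is non-nilpotent, so this is equivalent to $Q_C$ being acyclic. This acyclicity is exactly the point where condition (2) is needed and is the crux of the whole statement; I would establish it through \Cref{Lem: Positive type is acyclic}. The idea is to lift to the universal abelian cover of $Q$, where a hypothetical degree-$0$ closed path has content $(a_1,a_2,a_3)$ with all $a_i \geq 0$; condition (1) rules out the contractible case (a $3$-cycle cannot lie in $Q_C$), and for an essential cycle the degree is a fixed positive pairing of its homology class against the type $\theta(C)$, whose strict positivity is guaranteed by (2). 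Granting acyclicity, all $\Gamma_i$ are finite-dimensional, the splitting $\Gamma_1 = (\Gamma_1)_0 \oplus (\Gamma_1)_1$ into degree-$0$ and degree-$1$ arrows exhibits the cut structure with $\Lambda = \langle \Gamma_0, (\Gamma_1)_0\rangle = \Gamma_0$, and \Cref{Theo: HPG is f.d. GP1} identifies $\Gamma = \Pi_3(\Lambda)$ with $\Lambda$ being $2$-representation infinite. All of the homogeneity and finiteness bookkeeping is routine once the acyclicity is in hand.
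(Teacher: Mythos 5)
Your proposal is correct, and the forward direction together with the Gorenstein-parameter-$1$ computation follows the paper's own proof essentially verbatim (the paper cites \cite[Corollary 4.7]{Giovannini} where you invoke the graded superpotential resolution, and it uses the sum $x_s$ of all type-$s$ arrows generating a polynomial ring where you follow a single type-$s$ cycle; these are cosmetic differences). The genuine divergence is in the finite-dimensionality of the degree-$0$ part. The paper proves this by a direct combinatorial case analysis on a hypothetical degree-$0$ cycle in $Q_C$, splitting into the cases where the cycle uses three, two, or one arrow types and propagating degree constraints through the commutativity relations until condition (2) is violated. You instead reduce to acyclicity of $Q_C$ and outsource that to \Cref{Lem: Positive type is acyclic} via the universal cover and height functions. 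This is not circular: the paper proves that lemma independently by the height-function computation of \Cref{heightvalues}, and indeed remarks both before \Cref{Pro: HPPCut iff positive and homogeneous} and before \Cref{Lem: Positive type is acyclic} that the two arguments are interchangeable. What your route buys is a shorter, more conceptual argument that treats all three cases of the paper's analysis uniformly; what it costs is a forward reference to the tilings machinery (one must first observe that condition (1) alone already produces an $L_1$-periodic lozenge tiling, and condition (2) gives $\theta_i > 0$, before the lemma applies). One small imprecision: your parenthetical that condition (1) handles the contractible case because ``a $3$-cycle cannot lie in $Q_C$'' is not quite enough on its own, since a contractible degree-$0$ closed walk need not contain three consecutive arrows of distinct types; but the height-function argument you are deferring to disposes of that case anyway (it forces all three content coordinates to vanish), so no gap results.
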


\begin{proof}
Recall that the superpotential $\omega_Q$ for $kQ/I$ is a linear combination of all $3$-cycles in $Q$ consisting of arrows of three distinct types. Suppose $C$ is a higher preprojective cut. Then by \Cref{Pro: Potential is homogeneous}, the superpotential $\omega_Q$ needs to be homogeneous of degree 1. Hence, each $3$-cycle $\alpha_1 \alpha_2 \alpha_3$ in the support of $\omega_Q$ is homogeneous of degree $1$, which means each such $3$-cycle has exactly one arrow in $C$, so condition (1) is satisfied. Furthermore, if $C$ is higher preprojective, then $(R \ast G)_0 \simeq (kQ/I)_0 = (kQ)_C$ is finite-dimensional. Denote by $x_i$ the sum of all arrows of type $i$. Then $k[x_i]$ is a polynomial ring, hence infinite-dimensional, and therefore can not be contained in $(kQ/I)_0 = (kQ)_C$, which means that $x_i$ can not be homogeneous of degree $0$. Thus, some of the arrows of type $i$ need to be of degree $1$, which means they are in $C$. Since $i$ was arbitrary, $C$ contains arrows of all types, so condition (2) is satisfied.

Conversely, suppose that $C$ satisfies conditions (1) and (2). It is easy to see that placing the arrows in $C$ in degree $1$ defines a grading on $kQ/I$. From condition (1) it follows immediately that $\omega_Q$ becomes homogeneous of degree $1$ for the induced grading on $kQ$, and then it follows from \cite[Corollary 4.7]{Giovannini} that the grading on $kQ/I$ has Gorenstein parameter $1$. It remains to show that $(kQ/I)_0  = (kQ)_C$ is finite-dimensional. For a contradiction, suppose that $(kQ)_C$ is infinite-dimensional. Since there are only finitely many non-cyclic paths in $Q$, it follows that there must be a cycle $c$ in $(kQ)_C$, so all of the arrows in $c$ have degree $0$. Recall that the ideal $I$ of relations for $kQ/I$ is generated by commutativity relations. Using these relations, if $c$ contains arrows of all three types, we may use the relations to rewrite $c$ in $kQ/I$ as a cycle containing a $3$-cycle consisting of three arrows of three distinct types, so by condition (1) we see that $c$ must have at least degree $1$, a contradiction to the assumption that $c$ has degree $0$. If $c$ is of degree $0$ and contains only arrows of two types, we assume without loss of generality that the types are $1$ and $2$. Then we use the relations to rewrite $c$ as a cycle $\alpha_1 \cdots \alpha_{i} \alpha_{i+1} \cdots \alpha_l$ so that the first $i$ arrows all have type $1$, and the last $l-i$ all have type $2$. It then follows from condition (1) that the arrow of type $3$ completing $\alpha_i \alpha_{i+1}$ into a $3$-cycle must have degree $1$. This then implies that the two arrows $\beta_{i+1}$, $\beta_{i}$ of types $2$ and $1$ respectively with $\alpha_{i} \alpha_{i+1} = \beta_{i+1} \beta_{i} $ both have degree $0$, meaning they lie in $(kQ)_C$, so we write $c =  \alpha_1 \cdots \alpha_{i-1} \beta_{i+1} \beta_{i}  \alpha_{i+2} \cdots \alpha_l$ in $(kQ)_C$, and using the same argument we now see that the arrows of type $3$ completing $\alpha_{i-1} \beta_{i+1}$ and $\beta_{i} \alpha_{i+2}$ into $3$-cycles must have degree $1$. It follows inductively that indeed all arrows of type $3$ have degree $1$, and then condition (1) shows that no arrows of type $1$ or $2$ are in $C$, which contradicts condition (2). Lastly, if $c = \alpha_1 \cdots \alpha_l$ contains only arrows of one type, we assume without loss of generality that the type is $1$. Consider the arrows $\beta_2$, $\beta_3$ of types $2$ and $3$ forming a $3$-cycle $\alpha_1 \beta_2 \beta_3$. Then by condition (1), one of the arrows has degree $1$, and we assume without loss of generality that $\beta_2$ has degree $1$. Then the arrows $\alpha_1'$ and $\beta_3'$ of types $1$ and $3$ forming a $3$-cycle $\alpha_1' \beta_3' \beta_2$ must also be in degree $0$. But now we see that the arrow $\beta_2''$ of type $2$ completing $\beta_3' \alpha_2$ into a $3$-cycle must also be of degree $1$. We denote by $c' = \alpha_1' \cdots \alpha_l'$ the cycle consisting of arrows of type $1$ and containing $\alpha_1'$, and note that none of the $\alpha_i'$ are in $c$. It then follows from the degree of $\beta_2''$ that $\alpha_2'$ is of degree $0$, and inductively $c'$ is homogeneous of degree $0$. Repeating the argument for $c'$, we see inductively that every arrow of type $1$ has degree $0$, so it follows from condition (1) that no arrow of type $2$ or $3$ has degree $1$, contradicting condition (2). 
\end{proof}

Most of our results concerning the classification, construction and mutation of cuts are based on periodic lozenge tilings of the plane. We state the results here and postpone the proofs using tilings explicitly to \Cref{SSec: Class of types} and \Cref{SSec: Mutation and flips}. We will need the following data throughout this section, coming from the structure theorem of finitely generated abelian groups.

\begin{Pro} \label{Pro: Periodicity matrix}
    There exists an integer matrix $B = (\begin{smallmatrix}    a_1 & b_1 \\ a_2 & b_2 \end{smallmatrix} ) \in \mathbb{Z}^{2 \times 2}$ such that $\rho_1^{a_1} \rho_2^{a_2} = \bbone =\rho_1^{b_1} \rho_2^{b_2}$ and $\det(B) = n$. The matrix is unique up to right-multiplication by $\SL_2(\mathbb{Z})$. 
\end{Pro}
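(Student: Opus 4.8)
The plan is to realise $B$ as the matrix of a $\mathbb{Z}$-basis of the kernel lattice associated to the natural presentation of $\hat{G}$ by $\rho_1, \rho_2$, and then to read off both existence and uniqueness from elementary lattice theory. First I would set up the presentation. Since $\rho = \rho_1 \oplus \rho_2 \oplus \rho_3$ is the embedding $G \hookrightarrow \SL_3(k)$, the determinant condition gives $\rho_1 \rho_2 \rho_3 = \bbone$, so $\rho_3 = (\rho_1 \rho_2)^{-1}$; combined with \Cref{Lem:Faithful_Iff_Generating} (applied to the faithful representation $\rho$) this shows that $\{\rho_1, \rho_2\}$ already generates $\hat{G}$. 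Hence the map $\pi \colon \mathbb{Z}^2 \to \hat{G}$, $(m_1, m_2) \mapsto \rho_1^{m_1}\rho_2^{m_2}$, is a surjective group homomorphism, and its kernel $L := \ker(\pi)$ is a subgroup of $\mathbb{Z}^2$. Because $|\hat{G}| = |G| = n < \infty$, the first isomorphism theorem yields $[\mathbb{Z}^2 : L] = n$, so $L$ is a full-rank sublattice of $\mathbb{Z}^2$.

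For existence, I would pick any $\mathbb{Z}$-basis of $L$ and let $B$ be the integer matrix whose two columns are these basis vectors. The key observation is that the two defining conditions $\rho_1^{a_1}\rho_2^{a_2} = \bbone = \rho_1^{b_1}\rho_2^{b_2}$ say precisely that both columns of $B$ lie in $L$. Using the standard index formula $[\mathbb{Z}^2 : L] = |\det(B)|$ for a sublattice spanned by the columns of $B$, we get $|\det(B)| = n$; after swapping the two columns if necessary we arrange $\det(B) = n$, which preserves the property of being a basis.

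For uniqueness, I would argue in two steps. First, any matrix $B$ satisfying the hypotheses automatically has columns forming a basis of $L$: the sublattice $M \subseteq L$ they generate has $[\mathbb{Z}^2 : M] = |\det(B)| = n = [\mathbb{Z}^2 : L]$, and an inclusion of finite-index lattices of equal index is an equality, so $M = L$. Second, given two such matrices $B$ and $B'$, their columns are two bases of the same lattice $L$, hence $B' = B P$ for a unique change-of-basis matrix $P \in \GL_2(\mathbb{Z})$; comparing determinants gives $n = n \det(P)$, so $\det(P) = 1$ and $P \in \SL_2(\mathbb{Z})$. Conversely, right-multiplication by any $P \in \SL_2(\mathbb{Z})$ leaves the column lattice $L$ and the determinant $n$ unchanged, so it again satisfies the hypotheses. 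This establishes that $B$ is well-defined up to right-multiplication by $\SL_2(\mathbb{Z})$.

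The only step requiring genuine care is the upgrade in the uniqueness argument: the hypotheses as stated demand merely that the two columns \emph{lie in} $L$, not a priori that they form a basis, so one must invoke the equal-index argument to promote ``lying in $L$'' to ``spanning $L$''. Everything else is the standard dictionary between finite-index sublattices of $\mathbb{Z}^2$, the determinant as index, and $\GL_2(\mathbb{Z})$-changes of basis, which I expect to present without further computation.
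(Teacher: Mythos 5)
Your proof is correct and follows essentially the same route as the paper: present $\hat{G}$ as a quotient of $\mathbb{Z}^2$ via $\rho_1,\rho_2$, take $B$ to encode a basis of the kernel lattice, and use the index-equals-determinant dictionary for existence and the $\GL_2(\mathbb{Z})$/determinant comparison for uniqueness. Your extra step promoting ``columns lie in the kernel'' to ``columns form a basis'' via the equal-index argument is a worthwhile detail that the paper's terse proof leaves implicit.
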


\begin{proof}
    Recall from \Cref{Lem:Faithful_Iff_Generating} that $\hat{G}$ is generated by $\{ \rho_1, \rho_2\}$. Consider the associated presentation 
    \[ 0 \to \mathbb{Z}^2 \xrightarrow[]{\iota} \mathbb{Z}^2 \xrightarrow[]{\varphi} \hat{G} \to 0. \]
    Choosing appropriate bases for both terms $\mathbb{Z}^2$, we obtain a matrix $B$ for $\iota$ which satisfies $\rho_1^{a_1} \rho_2^{a_2} = \bbone =\rho_1^{b_1} \rho_2^{b_2}$ and $\det(B) = n$. Clearly, $B$ is unique up to ordered base change of $\Ker(\varphi)$, i.e. up to right-multiplication by $\SL_2(\mathbb{Z})$. 
\end{proof}

\begin{Rem}\label{Rem: Positive matrix}
   The matrix $B$ in \Cref{Pro: Periodicity matrix} can be chosen so that $a_1,b_2>0$, $a_2=0$, $b_1\ge 0$. More precisely, we can choose a basis $e_1, e_2 \in \mathbb{Z}^2$ such that $\varphi(e_i) = \rho_i$, and analyzing the relations shows that there exists a basis $f_1, f_2 \in \mathbb{Z}^2$ such that $\iota(f_1) = |\rho_1| e_1 $ and $\iota(f_2) =  b_1e_1 + \frac{|G|}{|\rho_1|} e_2  $, where $b_1 = \min\{ m \geq 0 \mid \varphi(me_1 + \frac{|G|}{|\rho_1|} e_2 ) = 0 \} $. 
\end{Rem}

Fix a matrix $B$ as in \Cref{Pro: Periodicity matrix}. We will prove the following theorem, see \Cref{Theo: Divisibility conditions for tilings}.  

\begin{Theo}\label{Theo: Divisibility conditions}
    The type of any higher preprojective cut $C$ satisfies $\theta_1(C) + \theta_2(C) + \theta_3(C) = n $ and $\theta_i(C) > 0$ for $1 \leq i \leq 3$. Furthermore, a triple $(\gamma_1, \gamma_2, \gamma_3) \in \mathbb{N}_{> 0}^{1 \times 3}$ with $\gamma_1 + \gamma_2 + \gamma_3 = n$ is the type of a higher preprojective cut if and only if we have 
    \[ (\gamma_1, \gamma_2) B \in n \mathbb{Z}^{1 \times 2}. \]
\end{Theo}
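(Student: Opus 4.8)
The plan is to pass to the universal abelian cover of the Cayley graph and reformulate a higher preprojective cut as an integral height function on $\mathbb{Z}^2$, so that both the numerical constraint on the type and the construction of a cut of prescribed type become statements about the \emph{slope} of this height function. Using \Cref{Pro: Periodicity matrix} I realize $\hat{G} = \mathbb{Z}^2/B\mathbb{Z}^2$ via $\varphi \colon \mathbb{Z}^2 \to \hat{G}$ with $\varphi(e_i) = \rho_i$ for $i=1,2$ and $\rho_3 = (\rho_1 \rho_2)^{-1}$, so that $Q$ is the quotient of the triangular lattice $\tilde{Q}$ on $\mathbb{Z}^2$, whose arrows are $x \xrightarrow{s} x + e_s$ (with $e_3 = -e_1 - e_2$) and whose positive $3$-cycles are $x \xrightarrow{1} x + e_1 \xrightarrow{2} x + e_1 + e_2 \xrightarrow{3} x$. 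Each arrow of $Q$ lies in exactly one positive $3$-cycle, so these cycles partition $Q_1$ into $n$ triples; condition (1) of \Cref{Pro: HPPCut iff positive and homogeneous} forces exactly one arrow of each triple into $C$, giving $\theta_1(C) + \theta_2(C) + \theta_3(C) = \lvert C \rvert = n$, while condition (2) gives $\theta_i(C) > 0$. This settles the first assertion.

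For the divisibility I encode $C$ by its degree function $\deg \colon Q_1 \to \{0,1\}$ and subtract the reference assignment $\deg_0$ giving every type-$3$ arrow degree $1$ and every type-$1,2$ arrow degree $0$. Both functions sum to $1$ on every $3$-cycle of either chirality (here I use that condition (1) applies to both orientations), so the difference $\delta = \deg - \deg_0$ is an integral $1$-cochain on $\tilde{Q}$ vanishing on every triangle. Since the triangular lattice is a simply connected $2$-complex, $\delta$ is exact, $\delta = dh$ for an integral height function $h \colon \mathbb{Z}^2 \to \mathbb{Z}$, unique up to a constant. As $\deg$ and $\deg_0$ are $B\mathbb{Z}^2$-periodic, so is $dh$, whence $h(x + w) - h(x) = c(w)$ is independent of $x$ and defines a homomorphism $c \colon B\mathbb{Z}^2 \to \mathbb{Z}$.

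For the forward direction I normalize $B$ as in \Cref{Rem: Positive matrix}, so that $w_1 = a_1 e_1$ and $w_2 = b_1 e_1 + b_2 e_2$; the condition $(\gamma_1, \gamma_2) B \in n\mathbb{Z}^2$ is invariant under right $\SL_2(\mathbb{Z})$-multiplication and so is unaffected. Summing $dh$ along a path from $x$ to $x + w_j$ and then over a fundamental domain $F$ with $\lvert F \rvert = n$, a multiplicity count shows each type-$1$ arrow of $Q$ is traversed $a_1$ (resp.\ $b_1$) times and each type-$2$ arrow $a_2$ (resp.\ $b_2$) times, yielding $n\,c(w_1) = \gamma_1 a_1 + \gamma_2 a_2$ and $n\,c(w_2) = \gamma_1 b_1 + \gamma_2 b_2$; that is, $(\gamma_1,\gamma_2)B = n\,(c(w_1), c(w_2)) \in n\mathbb{Z}^2$. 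I would isolate this bookkeeping as a small lemma, since it is the one place where a careless count could go wrong.

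For the converse, the same computation shows that the type is determined by the slopes, $(c(w_1), c(w_2)) = \tfrac{1}{n}(\gamma_1, \gamma_2) B$, and solving gives per-step slopes $\gamma_1/n, \gamma_2/n$ for the linear functional realizing $c$. Because $\gamma_i > 0$ and $\gamma_1 + \gamma_2 + \gamma_3 = n$, these lie strictly inside the simplex, so $h(x) = \lfloor \tfrac{\gamma_1}{n} x^{(1)} + \tfrac{\gamma_2}{n} x^{(2)} \rfloor$ (with $x = x^{(1)} e_1 + x^{(2)} e_2$) has increments in $\{0,1\}$ on type-$1,2$ arrows and in $\{-1,0\}$ on type-$3$ arrows, and satisfies $h(x + w_j) = h(x) + c(w_j)$ with $c(w_j) \in \mathbb{Z}$ by hypothesis. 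Thus $\deg_0 + dh$ is a genuine $B\mathbb{Z}^2$-periodic $\{0,1\}$-valued degree function descending to $Q$; it satisfies condition (1), has type $(\gamma_1, \gamma_2, \gamma_3)$ by the slope computation, and positivity of the $\gamma_i$ supplies condition (2), so it is a higher preprojective cut of the prescribed type. I expect the main obstacle to be exactly this converse step: turning the real slope datum into an honest $\{0,1\}$-valued periodic degree function and verifying that the floor-of-a-linear-function height yields a valid periodic lozenge tiling with precisely the target type, rather than merely the correct average slope.
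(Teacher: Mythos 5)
Your proposal is correct, and for the necessity direction it is essentially the paper's argument in different clothing: your potential $h$ with $dh = \deg - \deg_0$ is an affine renormalisation of the height function of \Cref{Con: Height function} (an arrow in the tiling contributes $+1$ there, an arrow in the cut $-2$, i.e.\ $1-3\deg$), and your averaging over a fundamental domain is the same computation as Step 1 of \Cref{heightvalues}, where the count $\theta_1'$ is shown to be independent of the starting coset by summing over coset representatives. Where you genuinely diverge is the converse. The paper builds the homomorphism $\xi_\gamma$ of \Cref{Lem: xi is a hom}, passes to the cyclic quotient $L_0/\Ker(\xi_\gamma)$, and invokes the AIR-cut construction there (\Cref{tilingconstr}, via \Cref{Rem: AIR cuts for possibly zero types}); you instead write down the explicit height $h(x) = \lfloor \tfrac{\gamma_1}{n}x^{(1)} + \tfrac{\gamma_2}{n}x^{(2)}\rfloor$ and verify the increment, periodicity, and triangle conditions of \Cref{Pro: HPPCut iff positive and homogeneous} by hand. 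The two constructions yield the same tiling --- compare \Cref{Pro: General xi-formula for T}, which describes $T_\gamma$ by the interval in which $\xi_\gamma(x)$ falls, i.e.\ by the fractional part of your linear functional --- so your route is self-contained and avoids any reliance on \cite{AIR}, which is a genuine simplification for this theorem in isolation. What the paper's detour buys, and what your version does not record, is the extra structure that $T_\gamma$ is periodic for the larger lattice $L_2 = \Ker(\xi_\gamma)$ with cyclic quotient and arises as a pullback of an AIR-cut; this is exactly what feeds \Cref{Cor: Tilting equiv to skewed AIR} and the count of sources and sinks, so if you wanted those consequences you would still need to observe that your floor-function cut is $L_2$-periodic (which it is, since $\ell$ takes integer values on $\Ker(\xi_\gamma)$). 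Your reduction of $\sum_i\theta_i(C) = n$ to the partition of $Q_1$ by upward triangles is also a nice explicit justification of what the paper dismisses as clear.
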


Let us illustrate this theorem with an example, showing that this allows for easy computation of all types. 
\begin{Exp}\label{Exp: size 12 example with simplex}
    Consider the group $G = \langle \frac{1}{2}(1,1,0), \frac{1}{6}(1,4,1) \rangle$, which is noncyclic of order $n = 12$. Furthermore, it is easy to check that a matrix $B$ as in \Cref{Pro: Periodicity matrix} is given by $(\begin{smallmatrix}
        6 & 4 \\ 0 & 2 
    \end{smallmatrix}).$ We can then give all possible types of higher preprojective cuts as follows. Denote the type by $(\gamma_1, \gamma_2, \gamma_3)$, where for each $i$ we have $0 < \gamma_i < 12$, and compute 
    \[ (\gamma_1, \gamma_2) B = (6 \gamma_1, 4 \gamma_1 + 2 \gamma_2).  \]
    To satisfy the condition from \Cref{Theo: Divisibility conditions}, we need $12 \mid 6\gamma_1$ and $12 \mid (4 \gamma_1 + 2 \gamma_2)$ which simplifies to $6 \mid (2\gamma_1+\gamma_2)$. The first divisibility condition gives $\gamma_1 \in \{ 2,4,6,8,10 \} $. Taking each possibility for $\gamma_1$, we substitute $\gamma_1$ in the second condition. Keeping in mind that $\gamma_1 + \gamma_2 + \gamma_3 = 12$, we see immediately that $\gamma_1 \in \{ 6, 10 \}$ can not occur for the type of a higher preprojective cut. In the other cases, all positive solutions to the second condition are given by 
    \begin{align*}
        (4,4,4), (8, 2, 2), (2,8,2), (2,2,8). 
    \end{align*}
    It is convenient to visualise these points in the simplex $\{ (\gamma_1, \gamma_2, \gamma_3) \in \mathbb{N}^{1 \times 3} \mid \gamma_1 + \gamma_2 + \gamma_3 = 12  \}$, and mark the types of higher preprojective cuts in red, as well as those points in the simplex which have a zero coordinate but still satisfy the condition from \Cref{Theo: Divisibility conditions}. These extra points are marked in black and have coordinates $(12, 0, 0), (0, 12, 0), (0,0,12), (6,6,0), (6,0,6), (0, 6,6).$
    \[
\begin{tikzpicture}[-,scale=0.4]

\begin{scope}[rotate=-15.4,inner sep=1.5mm]

\begin{scriptsize}

\coordinate (O) at (0, 0, 0);
\coordinate (X) at (1,0,-1);
\coordinate (Y) at (0, -1, 1);
\coordinate (Z) at (-1, 1, 0); 

\foreach \y in {0, ..., 11}
{  
    \pgfmathparse{int(11 - \y)}
    \foreach \x in {0, ..., \pgfmathresult} 
    {   
        \draw ($\x *(X) - \y *(Y) $) -> ($ \x *(X) + (X) - \y *(Y) $);
    }
}

\foreach \x in {0, ..., 11}
{  
    \pgfmathparse{int(11 - \x)}
    \foreach \y in {0, ..., \pgfmathresult} 
    {   
        \draw ($\x *(X) - \y *(Y) $) -> ($ \x *(X)  - \y *(Y) - (Y) $);
    }
}

\foreach \x in {1, ..., 12}
{  
    \foreach \z in {1, ..., \x} 
    {   
        \draw ($\x *(X) + \z *(Z) - (Z)$) -> ($ \x *(X)  + \z *(Z) $);
    }
}

\node[anchor= north east] at (O) {$(0, 0, 12)$};
\node[anchor= north west] at ($ 12 *(X)$) {$(0, 12, 0)$};
\node[anchor= south] at ($-12 *(Y)$) {$(12,0,0)$};

\node[anchor= north] at ($6 *(X)$) {$(0, 6, 6)$};
\node[anchor= east] at ($-6 *(Y)$) {$(6, 0, 6)$};
\node[anchor= west] at ($6 *(X) - 6 *(Y)$) {$(6, 6, 0)$};

\node at ($2 *(X) - 8 *(Y)$) {$\color{red} \bullet$};
\node at ($4 *(X) - 4 *(Y)$) {$\color{red} \bullet$};
\node at ($2 *(X) - 2 *(Y)$) {$\color{red} \bullet$};
\node at ($8 *(X) - 2 *(Y)$) {$\color{red} \bullet$};

\node at ($6 *(X)$) {$\bullet$};
\node at ($-6 *(Y)$) {$\bullet$};
\node at ($6 *(X) - 6 *(Y)$) {$\bullet$};

\node at (O) {$\bullet$};
\node at ($-12 *(Y)$) {$\bullet$};
\node at ($12 *(X)$) {$\bullet$};

\end{scriptsize}
\end{scope}
\end{tikzpicture}
\]
It is not difficult to see that all the marked points (both red and black) lie on an affine lattice. More precisely, after choosing e.g. $(0,0,12)$ as a starting point, all other marked points can be reached by integer combinations of $6u$ and $4u+2v$, where $u^\top = (1,0)$ and $v^\top = -(\frac{1}{2}, \frac{\sqrt{3}}{2}) $ are two vectors defining the sides of the simplex as it is projected onto the page. Note that the appearing coefficients $(6,0)$ and $(4, 2)$ are precisely the columns of the matrix $B$ we started with.
\end{Exp}

The previous example suggests an alternative method of computing all types of higher preprojective cuts. Fix a matrix $B = ( \begin{smallmatrix} a_1 & b_1 \\ a_2 & b_2 \end{smallmatrix})$ as above, as well as the vectors $u^\top = (1,0)$ and $v^\top = -(\frac{1}{2}, \frac{\sqrt{3}}{2}) $, and the lattices $L_0 = \langle u,v \rangle$ and $L_1 = \langle a_1u + a_2v, b_1 u + b_2v \rangle$. Then the $L_1$-points in the interior of an appropriately chosen simplex correspond to the types of higher preprojective cuts. This is the content of the following proposition, which is equivalent to \Cref{Theo: Divisibility conditions}.

\begin{Pro}\label{Pro: Simplex points are types}
With the notation from above, consider $p \colon \mathbb{Z}^{1 \times 3} \to L_0, (x,y,z) \mapsto (yu - xv)$. Denote by $\Delta_n =  \{ (\gamma_1, \gamma_2, \gamma_3) \in \mathbb{N}_{> 0}^{1 \times 3} \mid \gamma_1 + \gamma_2 + \gamma_3 = n  \}$ the standard simplex of potential types. Then $(\gamma_1, \gamma_2, \gamma_3) \in \Delta_n$ is the type of a higher preprojective cut if and only if $p((\gamma_1, \gamma_2, \gamma_3))$ is an $L_1$-lattice point in $p(\Delta_n)$, i.e. $p((\gamma_1, \gamma_2, \gamma_3))$ is the type of a higher preprojective cut if and only if 
\[ p((\gamma_1, \gamma_2, \gamma_3)) \in p(\Delta_n)  \cap L_1. \]    
\end{Pro}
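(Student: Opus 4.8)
The plan is to deduce the statement from \Cref{Theo: Divisibility conditions} by translating its arithmetic condition into the lattice language. First I would note that for a triple $(\gamma_1,\gamma_2,\gamma_3) \in \Delta_n$ the inclusion $p((\gamma_1,\gamma_2,\gamma_3)) \in p(\Delta_n)$ is automatic, so the condition on the right-hand side collapses to $p((\gamma_1,\gamma_2,\gamma_3)) \in L_1$. On the other side, membership in $\Delta_n$ already records $\gamma_1+\gamma_2+\gamma_3 = n$ and $\gamma_i > 0$, so \Cref{Theo: Divisibility conditions} says that such a triple is the type of a higher preprojective cut precisely when $(\gamma_1,\gamma_2)B \in n\mathbb{Z}^{1\times 2}$. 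Hence the whole proposition reduces to the single equivalence
\[ (\gamma_1,\gamma_2)B \in n\mathbb{Z}^{1\times 2} \iff p((\gamma_1,\gamma_2,\gamma_3)) \in L_1. \]

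To handle the right-hand condition I would use that $u$ and $v$ are $\mathbb{R}$-linearly independent, so $au+bv \mapsto (a,b)^\top$ identifies $L_0$ with $\mathbb{Z}^2$. Under this identification $p((\gamma_1,\gamma_2,\gamma_3)) = \gamma_2 u - \gamma_1 v$ has coordinate vector $(\gamma_2,-\gamma_1)^\top$, while the generators $a_1u+a_2v$ and $b_1u+b_2v$ of $L_1$ become exactly the two columns of $B$. Thus $L_1$ corresponds to the column lattice $B\,\mathbb{Z}^2$, and $p((\gamma_1,\gamma_2,\gamma_3)) \in L_1$ is equivalent to $B^{-1}(\gamma_2,-\gamma_1)^\top \in \mathbb{Z}^2$.

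It then remains to make this integrality condition explicit. Using $\det(B)=n$ I would write $B^{-1} = \tfrac1n \left(\begin{smallmatrix} b_2 & -b_1 \\ -a_2 & a_1\end{smallmatrix}\right)$ and compute
\[ B^{-1}\begin{pmatrix}\gamma_2 \\ -\gamma_1\end{pmatrix} = \frac1n\begin{pmatrix} b_1\gamma_1 + b_2\gamma_2 \\ -a_1\gamma_1 - a_2\gamma_2\end{pmatrix}, \]
which lies in $\mathbb{Z}^2$ if and only if $n \mid a_1\gamma_1 + a_2\gamma_2$ and $n \mid b_1\gamma_1 + b_2\gamma_2$. Since $(\gamma_1,\gamma_2)B = (a_1\gamma_1+a_2\gamma_2,\ b_1\gamma_1+b_2\gamma_2)$, this is exactly $(\gamma_1,\gamma_2)B \in n\mathbb{Z}^{1\times 2}$, establishing the displayed equivalence and hence the proposition. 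I do not expect a genuine obstacle here; the only thing requiring care is bookkeeping of the transposition and the sign conventions hidden in the definition of $p$ (the $yu-xv$ with its minus sign) and in the ordering of the columns of $B$, after which the claim is routine linear algebra over $\mathbb{Z}$.
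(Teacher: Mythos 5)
Your proposal is correct and follows essentially the same route as the paper: both reduce the statement to \Cref{Theo: Divisibility conditions} and then verify that $(\gamma_1,\gamma_2)B \in n\mathbb{Z}^{1\times 2}$ is equivalent to $\gamma_2 u - \gamma_1 v \in L_1$. The paper exhibits the integer coefficients $\tfrac{\gamma_1 b_1+\gamma_2 b_2}{n}$ and $-\tfrac{\gamma_1 a_1+\gamma_2 a_2}{n}$ by hand, which are precisely the entries of your $B^{-1}(\gamma_2,-\gamma_1)^\top$, so the two computations coincide.
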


\begin{proof}
     If $(\gamma_1,\gamma_2,\gamma_3)$ is a type of a higher preprojective cut, then $p((\gamma_1, \gamma_2, \gamma_3)) \in p(\Delta_n)$. Furthermore, by \Cref{Theo: Divisibility conditions}, we have that $\frac{\gamma_1a_1+\gamma_2a_2}{n},\frac{\gamma_1b_1+\gamma_2b_2}{n}\in \mathbb{Z}$. Then 
     \begin{align*}
    &\frac{\gamma_1b_1+\gamma_2b_2}{n}(a_1u+a_2v)-\frac{\gamma_1a_1+\gamma_2a_2}{n}(b_1u+b_2v)\\
    =&\frac{a_1\gamma_1b_1+a_1\gamma_2b_2-b_1\gamma_1a_1-b_1\gamma_2a_2}{n}u+\frac{a_2\gamma_1b_1+a_2\gamma_2b_2-b_2\gamma_1a_1-b_2\gamma_2a_2}{n}v\\
    =&\frac{a_1\gamma_2b_2-b_1\gamma_2a_2}{n}u+\frac{a_2\gamma_1b_1-b_2\gamma_1a_1}{n}v=\gamma_2u-\gamma_1v.
     \end{align*}
     
Conversely, an arbitrary point in $L_1$ is of the form \[ c_1(a_1u+a_2v)-c_2(b_1u+b_2v)=(c_1a_1-c_2b_1)u-(c_2b_2-c_1a_2)v ,\]
which has been projected from 
\[ (\gamma_1, \gamma_2, \gamma_3) =  (c_2b_2-c_1a_2,c_1a_1-c_2b_1, n-(c_2b_2-c_1a_2+c_1a_1-c_2b_1)).\] Then we have
\begin{align*}
    \gamma_1 a_1 + \gamma_2 a_2 &= a_1(c_2b_2-c_1a_2)+a_2(c_1a_1-c_2b_1)=c_2(a_1b_2-a_2b_1)=c_2n \\
    \gamma_1 b_1 + \gamma_2 b_2 &= b_1(c_2b_2-c_1a_2)+b_2(c_1a_1-c_2b_1)=c_1(a_1b_2-a_2b_1)=c_1n.
\end{align*}  
Hence, by \Cref{Theo: Divisibility conditions}, if this point is in $p(\Delta_n) \cap L_1$, then $(\gamma_1, \gamma_2, \gamma_3)$ is the type of a higher preprojective cut. 
\end{proof}

Next, we consider mutation. 

\begin{Def}\label{Def: Mutation of cuts}
    Given a higher preprojective cut $C \subseteq Q_1$, we call a vertex $v \in Q_0$ \emph{mutable} (in $Q_C$) if it is a source or a sink in $Q_C$. If $v$ is a source, we obtain a new cut 
    \[ \mu_v(C) = \{ \alpha \in C \mid t(\alpha) \neq v \} \cup \{ \alpha \in Q_1 \mid s(\alpha) = v   \},   \]
    called the \emph{mutation of $C$ at $v$}. \newline 
    The mutation at a sink $v$ in $Q_C$ is defined dually as
    \[\mu_v(C) = \{ \alpha \in C \mid s(\alpha) \neq v \} \cup \{ \alpha \in Q_1 \mid t(\alpha) = v   \}. \]
    A \emph{mutation sequence} in $Q$ is a sequence of vertices $v_1, \ldots, v_m \in Q_0$ such that $v_i$ is mutable in $Q_{(\mu_{i-1} \circ \cdots \circ \mu_{1})(C)}$. 
\end{Def}

\begin{Rem}
    A direct consequence of the definition is that a mutable vertex $v$ stays mutable after mutation, and that $\mu_v^2(C) = C$. Furthermore, since every vertex has exactly one incoming and one outgoing arrow of each type, we see that the type of a cut $C$ is invariant under mutation, i.e. for any mutable vertex $v$ in $Q_C$ we have 
    \[ \theta(C) = \theta(\mu_v(C)). \]
\end{Rem}

We will prove that the type is a sharp mutation invariant, see \Cref{Theo: Flips are transitive}. 

\begin{Theo}\label{Theo: Mutation is transitive}
    Two cuts $C_1$, $C_2$ have the same type if and only if they are connected by a mutation sequence, i.e. they fulfill $\theta(C_1) = \theta(C_2)$ if and only if there exists a sequence of vertices $v_1, \ldots, v_m \in Q_0$ such that $(\mu_{v_m} \circ \cdots \circ \mu_{v_1})(C_1)  = C_2$. 
\end{Theo}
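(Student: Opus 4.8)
The plan is to prove the two implications separately, the forward one being immediate and the reverse one carrying all the weight. For the forward direction, suppose $C_2 = (\mu_{v_m} \circ \cdots \circ \mu_{v_1})(C_1)$. Since $Q$ is a Cayley graph, every vertex has exactly one incoming and one outgoing arrow of each of the three types, so a single mutation at a source or sink deletes one arrow of each type from the cut and inserts one arrow of each type, leaving $\theta$ unchanged. Hence $\theta(C_1) = \theta(C_2)$; this is exactly the observation recorded in the remark following \Cref{Def: Mutation of cuts}. All the difficulty is in the converse: that $\theta(C_1) = \theta(C_2)$ forces $C_1$ and $C_2$ to be mutation-equivalent.

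For the converse I would pass to the universal abelian cover. Lift $Q$ to the infinite quiver $\tilde Q$ on the triangular lattice $\mathbb{Z}^2$, namely the Cayley graph of $\mathbb{Z}^2$ whose three generators lift $\rho_1, \rho_2, \rho_3$ and sum to zero (using that $\rho_1 \rho_2 \rho_3$ is trivial in $\hat G$), so that $Q = \tilde Q / \Lambda_B$ for the period lattice $\Lambda_B = B\,\mathbb{Z}^2$ of \Cref{Pro: Periodicity matrix}. A cut $C$ lifts to a $\Lambda_B$-periodic assignment of degrees in $\{0,1\}$ to the arrows of $\tilde Q$ satisfying condition (1) of \Cref{Pro: HPPCut iff positive and homogeneous} on every triangle. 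I would encode such a cut by a height function $h \colon \mathbb{Z}^2 \to \mathbb{Z}$ on the vertices, chosen so that the degree of each arrow is the jump of $h$ across it (up to a fixed constant per type) and so that acyclicity of $Q_C$ (finite-dimensionality of the cut algebra) becomes monotonicity of $h$. In this dictionary a cut is exactly a periodic lozenge tiling, the type $\theta(C)$ records the number of lozenges of each of the three orientations in a fundamental domain, and, via the matrix $B$ as in \Cref{Theo: Divisibility conditions}, $\theta(C)$ determines the \emph{slope} of $h$, i.e.\ the quasi-periods $h(x+\ell) - h(x)$ for $\ell \in \Lambda_B$. Consequently, when $C_1$ and $C_2$ share a type, the difference $h_1 - h_2$ is genuinely $\Lambda_B$-periodic, hence bounded and attained on a fundamental domain.

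The remaining ingredient is to identify mutation with the elementary flip. A source (respectively sink) of $Q_C$ is precisely a local maximum (respectively minimum) of $h$, and $\mu_v$ changes $h(v)$ by one step while fixing all other values; geometrically this is the cube-addition/removal flip of the stepped surface. This reduces the theorem to the purely combinatorial statement that any two $\Lambda_B$-periodic height functions of the same slope are connected by flips, which is the content of \Cref{Theo: Flips are transitive}.

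The main obstacle is precisely this flip-connectivity in the periodic setting. I would prove it through the lattice structure of height functions: if $h_1, h_2$ have a common slope then both $\max(h_1,h_2)$ and $\min(h_1,h_2)$ are again valid height functions of that same slope, since the slope is linear and cancels in the comparison. It therefore suffices to treat the monotone case, that $h \ge h'$ pointwise with equal slopes implies $h$ can be lowered to $h'$ by flips. Here one inducts on the finite nonnegative integer $\sum_{x}(h(x) - h'(x))$, summed over a fundamental domain for $\Lambda_B$. When this sum is positive, I would argue that at a vertex where $h - h'$ is maximal, $h$ exhibits a flippable local peak that can be lowered while remaining $\ge h'$, strictly decreasing the sum. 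Producing such a peak is the delicate step: a maximizer of $h - h'$ need only be a plateau, so one must use the monotonicity step-constraints together with periodicity to locate a genuine peak within it. Granting this, applying the monotone case to $h_1 \le \max(h_1,h_2) \ge h_2$ connects $C_1$ and $C_2$ through the cut corresponding to $\max(h_1,h_2)$, which completes the proof.
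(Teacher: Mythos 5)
Your overall strategy --- translate cuts into $L_1$-periodic height functions, observe that equal type means equal slope so that $h_1 - h_2$ is genuinely periodic, identify mutation with the cube flip, and then prove flip-connectivity at fixed slope --- is exactly the skeleton of the paper's argument (\Cref{Rem: Tilings and Periodic cuts}, \Cref{Con: Height function}, \Cref{heightvalues}, \Cref{Theo: Flips are transitive}). Where you diverge is in the connectivity argument itself: you reduce to the monotone case via the lattice operations $\max(h_1,h_2)$ and $\min(h_1,h_2)$, whereas the paper works directly with $h = (h_{T_1}-h_{T_2})/3$ and eliminates first its positive and then its negative part. Your lattice reduction is legitimate, but the claim that $\max(h_1,h_2)$ is again a valid height function does not follow from ``the slope cancels'': one must check the local increment condition, and this works only because $h_{T_1}(x) \equiv h_{T_2}(x) \pmod 3$ for every $x$ (each edge increments the height by $1$ modulo $3$ independently of the tiling), so wherever the two functions disagree they differ by at least $3$ and the larger one remains on top across any single edge. (Also, with the paper's sign conventions a source of $Q_C$ is a local \emph{minimum} of $h$ and a sink a local \emph{maximum}; this is harmless but should be fixed.)

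The genuine gap is the step you flag and then grant: producing a flippable peak at a maximizer of $h-h'$ in the monotone case. This is precisely where the content of \Cref{Theo: Flips are transitive} lies, and it cannot be waved through, since a priori the plateau of maximizers could be carried around a directed cycle of the cut quiver and contain no sink at all. The fix is the paper's maximal-path argument: the set $S$ of maximizers of $h-h'$ in $L_0/L_1$ is closed under following outgoing arrows of the cut quiver of the upper tiling (along such an arrow $h$ increases by exactly $1$ while $h'$ increases by at most $1$), and because all three entries of the type are positive, that quiver is acyclic on the finite quotient (\Cref{Lem: Positive type is acyclic}); hence any maximal path inside $S$ terminates at a sink lying in $S$. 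At that sink $(h-h')\geq 3$ by the mod-$3$ congruence, so the downward flip keeps $h \geq h'$ and strictly decreases $\sum(h-h')$, completing your induction. Note that the acyclicity statement is itself a nontrivial input, proved in the paper by a separate height computation using $\gamma_1,\gamma_2,\gamma_3>0$, so it must be invoked explicitly rather than absorbed into ``monotonicity of $h$''.
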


\begin{Rem}
    The above defined mutation is of interest to us because it corresponds to $2$-APR tilting of the $2$-representation infinite algebra $\Bbbk Q_C/(I \cap \Bbbk Q_C) $ as defined in \cite[Definition 3.1]{IyamaOppermann}. It is known that this tilting preserves the property of being $2$-representation infinite, which we can see directly from the fact that mutation produces new cuts. In particular, this means that if $\theta(C_1) = \theta(C_2)$, then the corresponding algebras are derived equivalent. 
    
    Furthermore, $2$-APR tilting also preserves the property of being $n$-representation finite, and in \cite{IyamaOppermann} it is shown that $n$-representation finite algebras of type $A$ with the same higher preprojective algebra are connected by a sequence of tilts. This is not true in our case, but the different tilting equivalence classes are parametrised by the types. 
\end{Rem}

\begin{Rem}
    Let us note how the above leads to a classification of all $2$-representation infinite algebras of type $\tilde{A}$. One can first list all possible $3$-preprojective algebras using \Cref{Theo:SL3 Classification}, then for a given higher preprojective algebra all types of the possible cuts using \Cref{Theo: Divisibility conditions} or \Cref{Pro: Simplex points are types}. The final step can be done by choosing a type and starting with the cut constructed in the proof of \Cref{Theo: Divisibility conditions}, see \Cref{tilingconstr} for details, and iterating mutations. \Cref{Theo: Mutation is transitive} guarantees that we find all algebras of the chosen type this way. 
\end{Rem}

We also obtain the following corollary. 

\begin{Cor}\label{Cor: Tilting equiv to skewed AIR}
    Let $\Lambda$ be a $2$-representation infinite algebra of type $\tilde{A}$. Then $\Lambda$ is $2$-APR tilting equivalent to an algebra $\Lambda' \ast C_m$ for some $2$-representation infinite algebra $\Lambda'$ of type $\tilde{A}$ arising from an AIR-cut.  
\end{Cor}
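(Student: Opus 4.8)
The plan is to reduce the corollary to two facts already available: that the type of a cut is a complete invariant for mutation (\Cref{Theo: Mutation is transitive}), and that the constructive proof of \Cref{Theo: Divisibility conditions} produces, for each admissible type, a distinguished cut of skewed-AIR form. Fix $\Lambda$ of type $\tilde A$. Then $\Pi_3(\Lambda) \simeq R \ast G$ for an abelian $G \leq \SL_3(k)$, and $\Lambda$ is the cut algebra $(\Bbbk Q/I)_C$ of a higher preprojective cut $C \subseteq Q_1$ of some type $\theta(C) = (\gamma_1, \gamma_2, \gamma_3)$. Since each mutation of cuts realizes a $2$-APR tilt of the associated cut algebra, it suffices to find one cut $C_0$ with $\theta(C_0) = \theta(C)$ whose cut algebra has the form $\Lambda' \ast C_m$ with $\Lambda'$ arising from an AIR-cut: \Cref{Theo: Mutation is transitive} will then supply a mutation sequence from $C$ to $C_0$, and reading it as a chain of $2$-APR tilts gives the desired $2$-APR tilting equivalence between $\Lambda$ and $\Lambda' \ast C_m$.

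To produce $C_0$, I would take the cut associated by the construction in \Cref{tilingconstr} to the type $(\gamma_1, \gamma_2, \gamma_3)$. The point is that this cut is not an arbitrary representative of its type: by construction the corresponding periodic lozenge tiling carries an extra period, so that $C_0$ is invariant under a cyclic group $K \simeq C_m$ of deck transformations of a covering $Q_\rho \to Q_{\rho \restriction_H}$ onto a smaller McKay quiver. Here $H \leq G$ is a cyclic subgroup with complement $K$, which exists because $G$ has at most two invariant factors by \Cref{Lem:Number_Of_Cyclic_Factors}, forcing $K$ to be cyclic. This invariance is exactly the statement, in the language of the remark following \Cref{Theo:SL3 Classification}, that $C_0$ is the pullback along $p \colon Q_\rho \to Q_{\rho \restriction_H}$ of an AIR-cut on $R \ast H$. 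Setting $\Lambda'$ to be the degree-$0$ part of $R \ast H$ for that AIR-cut, which is a $2$-representation infinite algebra of type $\tilde A$, \Cref{Theo: LeMeur} then identifies the cut algebra $(\Bbbk Q/I)_{C_0}$ with the skewed algebra $\Lambda' \ast C_m$.

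Assembling the pieces finishes the proof: $C$ and $C_0$ have the same type, \Cref{Theo: Mutation is transitive} connects them by mutations, and each mutation is a $2$-APR tilt, so $\Lambda$ is $2$-APR tilting equivalent to $\Lambda' \ast C_m$.

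I expect the entire difficulty to sit in the middle step, namely in verifying that the tiling output by \Cref{tilingconstr} really does possess the extra periodicity and hence descends to an AIR-cut on the cyclic quotient. Producing \emph{some} cut of the prescribed type is immediate from \Cref{Theo: Divisibility conditions}, but matching the specific constructed cut with the pullback of an AIR-cut—rather than with an arbitrary skew-group presentation—is the substantive combinatorial content, and it is precisely what the special form of the construction in \Cref{tilingconstr} is designed to guarantee. Once that invariance is in hand, the passage through \Cref{Theo: LeMeur} and \Cref{Theo: Mutation is transitive} is formal.
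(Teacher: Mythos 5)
Your proposal is correct and follows essentially the same route as the paper, whose proof is simply the one-line citation of \Cref{tilingconstr} and \Cref{Theo: Mutation is transitive}: the constructed cut $T_\gamma$ is periodic for the larger lattice $L_2 \supseteq L_1$ with cyclic quotient, hence is the pullback of an AIR-cut (cf.\ \Cref{Rem: Our tilings are lifted}), and mutation transitivity plus the identification of cut mutation with $2$-APR tilting does the rest. Your elaboration of the middle step is a faithful (and more explicit) account of what the paper leaves implicit.
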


\begin{proof}
    Follows from \Cref{tilingconstr} and \Cref{Theo: Mutation is transitive}. 
\end{proof}

\subsection{Universal cover and lozenges}\label{SS: Universal cover}
To prove the statements of this section, we will use methods from the theory of lozenge tilings, which is dual to the theory of hexagonal dimer models on the torus. 

We begin by noting that the quiver of the algebra $R \ast G \simeq \Bbbk Q /I$ is naturally embedded on a torus. More precisely, fix the vectors $u^\top = (1,0)$ and $v^\top = -(\frac{1}{2}, \frac{\sqrt{3}}{2}) $ and consider the lattice $L_0  = (u|v)\mathbb{Z}^2$ of equilateral triangles spanned by these two vectors. Fix also the vector $w = -(u+v)$.  

\[
\begin{tikzpicture}[-,scale=1]
\clip (-2.3, 0, -1.5) rectangle ( 5.4,0,7.7);

\begin{scope}[rotate=-15.4,inner sep=1.5mm]

\begin{scriptsize}

\coordinate (O) at (0, 0, 0);
\coordinate (X) at (1,0,-1);
\coordinate (Y) at (0, -1, 1);
\coordinate (Z) at (-1, 1, 0); 

\foreach \x in {-2, ..., 2}
{
    \foreach \y in {-2, ..., 2}
    {
        \draw ($ \x *(X) + \y *(Y) $) -> ($\x *(X) + \y *(Y) + (X)$);
        \draw ($\x *(X) + \y *(Y) + (X)$) -> ($\x *(X) + \y *(Y) + (X) + (Y)$); 
        \draw ($\x *(X) + \y *(Y) + (X) + (Y)$) -> ($ \x *(X) + \y *(Y) $);
        
        \draw ($ -2 *(X) + \y *(Y) $) -> ($ -2 *(X) + \y+1 *(Y) $);
    }
    \draw  ($ \x *(X) + 3 *(Y) $) -> ($\x *(X) + 3 *(Y) + (X)$);
}

\draw ($0.5 *(X)$) node[anchor=south] {u};
\draw ($0.5 *(Y)$) node[anchor=east] {v};
\draw[-Stealth, thick] (0,0,0)   -- (X);
\draw[-Stealth, thick] (0,0,0)   -- (Y);

\node at (-3, 0, 3) {$\cdots$};
\node at ($4*(X)$) {$\cdots$};

\end{scriptsize}
\end{scope}
\end{tikzpicture}
\]

Then we place the irreducible characters of $G$ on the points of $L_0$ so that the actions of $u$ and $v$ correspond to multiplication by $\rho_1$ and $\rho_2$ respectively, i.e. we choose the group morphism $\varphi \colon L_0 \to \operatorname{Irr}(G), u \mapsto \rho_1, v \mapsto \rho_2$.

\[
\begin{tikzpicture}[-Stealth,scale=1]
\clip (-2.3, 0, -1.5) rectangle ( 5.4,0,7.7);

\begin{scope}[rotate=-15.4,inner sep=3mm]
\begin{tiny}

\coordinate (O) at (0, 0, 0);
\coordinate (X) at (1,0,-1);
\coordinate (Y) at (0, -1, 1);
\coordinate (Z) at (-1, 1, 0); 

\foreach \x in {-2, ..., 2}
{
    \foreach \y in {-2, ..., 2}
    {
    
        \node (A) at ($ \x *(X) + \y *(Y) $) {};
        \node (B) at ($\x *(X) + \y *(Y) + (X)$) {};
        \node (C) at ($\x *(X) + \y *(Y) + (X) + (Y)$) {};
        \draw (A) -- (B);
        \draw (B) -- (C); 
        \draw (C) -- (A);
        
    }
}

\foreach \x in {-2, ..., 3}
{
    \foreach \y in {-2, ..., 3}
    {
    \ifnum \x=0 
        \ifnum \y=0 \node at (0,0,0) {$\bbone$};
        \else \node at ($\y *(Y)$) {$\rho_2^{\y}$}; 
        \fi
    \else 
        \ifnum \y=0 \node at ($\x *(X)$) {$\rho_1^{\x}$};
        \else
        \node at ($\x *(X) + \y *(Y)$) {$\rho_1^{\x} \rho_2^{\y} $ };
        \fi
    \fi 
    }

}

\end{tiny}
\end{scope}
\end{tikzpicture}
\]

Note that the characters repeat, and we see that the kernel $L_1 = \Ker(\varphi)$ is a cofinite sublattice $L_1  \leq L_0$. We choose some generators for this sublattice, i.e. we write $L_1 = \langle a_1u + a_2v, b_1u + b_2v \rangle $ and record this choice of integer coefficients in the matrix $B = (\begin{smallmatrix} a_1 & b_1 \\ a_2 & b_2 \end{smallmatrix})$. This is, up to permutation of columns, a matrix as in in \Cref{Pro: Periodicity matrix}.

\begin{Def}
    We denote by $\hat{Q}$ the quiver with vertices $\hat{Q}_0 = L_0$ and arrows $\hat{Q}_1 = \{ p \to (p + x) \mid p \in L_0, x \in \{u,v,w\}  \} $. 
\end{Def}

Note that we obtained a covering $q \colon \hat{Q} \to Q$, induced by $L_0 \to L_0/L_1$. We transfer our terminology along this covering. For the sake of visualisation we will refer to arrows of type $u$, $v$ and $w$ in $\hat{Q}$ corresponding to the types $1$, $2$ and $3$ in $Q$ under the quotient map $q$. See also \cite[Construction 5.2]{HIO} for this construction in more generality. 

Given a higher preprojective cut $C$ on $Q$, we consider the set $\hat{C} = q^{-1}(C) = \{ \alpha \in \hat{Q}_1 \mid q(\alpha) \in C  \}$ and note that every triangle in $\hat{Q}$ must have exactly one arrow in $\hat{C}$. Hence, when removing $\hat{C}$ from $\hat{Q}$, we merge pairs of adjacent triangles into \emph{lozenges}. 

\[
\begin{tikzpicture}[-,scale=0.8]
\begin{scope}[rotate=-15.6,inner sep=1.5mm]
\begin{scriptsize}

\coordinate (O) at (0, 0, 0);
\coordinate (X) at (1,0,-1);
\coordinate (Y) at (0, -1, 1);
\coordinate (Z) at (-1, 1, 0); 

\draw (O) edge (Y);
\draw (Y) edge ($(Y) - (Z)$);
\draw (O) edge ($-1 *(Z)$);
\draw ($-1 *(Z)$) edge ($(Y) - (Z)$);
\draw (Y) edge[color = red] ($-1 *(Z)$);

\draw ($2 *(X)$) -- ($2 *(X) + (Y)$);
\draw ($2 *(X) + (Y)$) edge ($2 *(X) + (Y) - (Z)$);
\draw ($2 *(X)$) edge ($2 *(X) - (Z)$);
\draw ($2 *(X) - (Z)$) edge ($2 *(X) - (Z) + (Y)$);

\node at ($(X) - 0.5 *(Z) + 0.5 *(Y)$) {$\rightsquigarrow$};

\end{scriptsize}
\end{scope}
\end{tikzpicture}
\]
In fact, we obtain a lozenge tiling of the plane, which is periodic with respect to the $L_1$-action. The lozenges come in three different types, corresponding to the type of the removed arrow, so we label them by $U$, $V$ and $W$ accordingly. 
\[
\begin{tikzpicture}[-,scale=0.8]
\begin{scope}[rotate=-15.6,inner sep=1.5mm]
\begin{scriptsize}


\coordinate (O) at (0, 0, 0);
\coordinate (X) at (1,0,-1);
\coordinate (Y) at (0, -1, 1);
\coordinate (Z) at (-1, 1, 0); 

\VLozenge{($-3 *(X)$)}
\ULozenge{(Z)}
\WLozenge{($2 *(X)$)}


\node at ($-2.5 *(X) + 0.5 *(Z) $) {$V$};
\node at ($0.5 *(Z) - 0.5 *(Y)$) {$U$};
\node at ($2.5 *(X) - 0.5 *(Y)$) {$W$};

\end{scriptsize}
\end{scope}
\end{tikzpicture}
\]

It will be useful to broaden our definition of a cut slightly to include those sets of arrows which correspond to periodic lozenge tilings but do not necessarily give rise to higher preprojective cuts. 

\begin{Def}
    An \emph{$L_1$-periodic cut} of $\hat{Q}$ is a set of arrows $\hat{C} \subseteq \hat{Q}_1$ such that removing the arrows in $\hat{C}$ gives an $L_1$-periodic lozenge tiling of the plane. We drop the dependence on $L_1$ when no confusion is possible. 
\end{Def}

\begin{Rem}
    Every higher preprojective cut $C$ on $Q$ gives rise to a periodic cut $q^{-1}(C)$ on $\hat{Q}$. However, there are periodic cuts on $\hat{Q}$ whose associated tilings do not contain lozenges of all three types. Such a periodic cut will induce a grading on $\Bbbk Q/I \simeq R \ast G$ which has Gorenstein parameter $1$, but the degree $0$ piece will be infinite-dimensional, hence it will not be a higher preprojective cut. 
\end{Rem}

It is important to note the following. 

\begin{Pro}\label{Pro: Periodic cuts and HPPC}
    There is a bijection between higher preprojective cuts $C$ on $Q$ and $L_1$-periodic cuts $\hat{C}$ on $\hat{Q}$ in which every type of arrow occurs, given by $C \mapsto \hat{C}$ and $\hat{C} \mapsto q(\hat{C})$.
\end{Pro}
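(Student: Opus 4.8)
The plan is to regard $q \colon \hat{Q} \to Q$ as the covering of quivers induced by $L_0 \to L_0/L_1$ and to combine elementary covering bookkeeping with the combinatorial description of cuts from \Cref{Pro: HPPCut iff positive and homogeneous}. First I would note that, since $q$ is a covering, taking preimages and images gives mutually inverse bijections between subsets of $Q_1$ and $L_1$-invariant subsets of $\hat{Q}_1$: one has $q(q^{-1}(C)) = C$ for any $C \subseteq Q_1$, and $q^{-1}(q(\hat{C})) = \hat{C}$ whenever $\hat{C}$ is $L_1$-invariant. In particular $\hat{C} = q^{-1}(C)$ is automatically $L_1$-periodic, so the two assignments $C \mapsto q^{-1}(C)$ and $\hat{C} \mapsto q(\hat{C})$ are inverse to one another on the relevant classes of subsets; it then only remains to match up the defining conditions on the two sides.

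For this I would translate conditions (1) and (2) of \Cref{Pro: HPPCut iff positive and homogeneous}. The $3$-cycles of $Q$ made of arrows of three distinct types are exactly the $q$-images of the elementary triangles of $\hat{Q}$, namely $p \xrightarrow{u} p+u \xrightarrow{w} p-v \xrightarrow{v} p$ and $p \xrightarrow{v} p+v \xrightarrow{w} p-u \xrightarrow{u} p$ (using $u+v+w=0$); as $q$ is a local isomorphism it carries each such triangle bijectively onto a $3$-cycle and preserves the number of arrows lying in a cut, so condition (1) for $C$ is equivalent to every triangle of $\hat{Q}$ containing exactly one arrow of $\hat{C}$. Since $q$ sends the types $u,v,w$ to $1,2,3$ and $\hat{C}$ is a full preimage, condition (2) for $C$ is equivalent to all three types occurring in $\hat{C}$. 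Finally, the tiling condition defining an $L_1$-periodic cut is equivalent to the very same ``one arrow per triangle'' condition: each edge of the triangular lattice borders exactly two triangles, so removing an arrow merges its two triangles into a lozenge, and if every triangle loses exactly one of its three edges then the triangle across a removed edge has that edge as its unique removed edge too, whence the merging is a well-defined perfect matching of triangles into lozenges tiling the plane; conversely a lozenge tiling removes precisely one edge per triangle, and periodicity transfers verbatim as $\hat{C}$ is $L_1$-invariant. Stringing these equivalences together shows that $C$ is a higher preprojective cut if and only if $q^{-1}(C)$ is an $L_1$-periodic cut in which every type occurs, which with the first paragraph gives the bijection.

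I expect the only genuinely substantive step to be the tiling equivalence of the last paragraph: one must check that ``exactly one removed edge per triangle'' really forces a consistent pairing, so that no triangle is left unmatched or matched twice and an honest tiling results. Once the local fact that each interior edge borders exactly two triangles is invoked this is short, but it is where the content lies, the rest being covering-theoretic bookkeeping already built into the construction of $\hat{Q}$ and the map $q$.
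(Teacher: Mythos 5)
Your proposal is correct and follows essentially the same route as the paper: both directions reduce to the characterisation of higher preprojective cuts in \Cref{Pro: HPPCut iff positive and homogeneous} together with the identification of the type-distinct $3$-cycles of $Q$ with the elementary triangles of $\hat{Q}$, so that ``exactly one arrow per $3$-cycle'' corresponds to a lozenge tiling. Your write-up is merely more explicit than the paper's about the covering bookkeeping ($q^{-1}$ and $q$ being mutually inverse on $L_1$-invariant subsets) and about why one removed edge per triangle yields a consistent perfect matching of triangles into lozenges.
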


\begin{proof}
    Clearly, every higher preprojective cut $C$ on $Q$ gives rise to an $L_1$-periodic of $\hat{Q}$. Recall from \Cref{Pro: HPPCut iff positive and homogeneous} that for a higher preprojective cut $C$ every type of arrow must occur in $C$, hence every type of arrow appears in $\hat{C}$. Conversely, every $L_1$-periodic cut $\hat{C}$ gives rise to a cut $C = q(\hat{C})$ on $Q$. Interpreting $\hat{C}$ as a lozenge tiling and using again \Cref{Pro: HPPCut iff positive and homogeneous}, we see that $(kQ)_C$ is finite-dimensional, hence $C$ is higher preprojective. 
\end{proof}

\subsection{Classification of types}\label{SSec: Class of types}
The results of this subsection were motivated by the unpublished work \cite{Martin} where classification of types has been done in terms of fundamental triangles. Here we will state a result that is equivalent to \cite[Theorem 6]{Martin}, and prove it using simpler methods and without introducing the fundamental triangle. 

For the rest of this section, let $B = (\begin{smallmatrix} a_1 & b_1 \\ a_2 & b_2 \end{smallmatrix})$ be a $2$-rank integer matrix as in \Cref{Pro: Periodicity matrix}, and let $L$ be the lattice defined by $B$. Let $n=\det B=|L_0/L_1|$. 

A lozenge tiling of $L_0$ can be seen as a subset of its arrows satisfying a certain property. We can also view it as a perfect matching between upwards and downwards pointing triangles ($\triangle$ and \rotatebox[origin=c]{180}{$\triangle$}, respectively). Alternatively, one can view a lozenge tiling as a map $T \colon L_0 \to \{U, V, W\}$ which places a lozenge $T(x)$ at point $x$ as shown below. The point $x$ is the lower left corner of an upwards pointing triangle, and $T(x)$ specifies into which type of lozenge the triangle is merged. 

\[
\begin{tikzpicture}[-,scale=0.8]
\begin{scope}[rotate=-15.6,inner sep=1.5mm]
\begin{scriptsize}


\coordinate (O) at (0, 0, 0);
\coordinate (X) at (1,0,-1);
\coordinate (Y) at (0, -1, 1);
\coordinate (Z) at (-1, 1, 0); 

\VLozenge{($-3 *(X)$)}
\ULozenge{(Z)}
\WLozenge{($2 *(X)$)}

\draw[red] ($-3 *(X)$) -- ($-3 *(X) - (Y)$);
\draw[red] (Z) -- ($(Z) + (X)$);
\draw[red] ($3 *(X)$) -- ($3 *(X) + (Z)$);

\node at (Z) {$\bullet$};
\node at ($-3 *(X)$) {$\bullet$};
\node at ($2 *(X)$) {$\bullet$};

\filldraw[white] ($-2.5 *(X) + 0.5 *(Z)$) circle (8pt);
\filldraw[white] ($0.5 *(Z) - 0.5 *(Y)$) circle (8pt);
\filldraw[white] ($2.5 *(X) - 0.5 *(Y)$) circle (8pt);

\node at ($-2.5 *(X) + 0.5 *(Z) $) {$V$};
\node at ($0.5 *(Z) - 0.5 *(Y)$) {$U$};
\node at ($2.5 *(X) - 0.5 *(Y)$) {$W$};

\node at ($(Z)-0.15 *(X)$) {$x$};
\node at ($-3.15 *(X)$) {$x$};
\node at ($1.85 *(X)$) {$x$};

\end{scriptsize}
\end{scope}
\end{tikzpicture}
\]

Clearly, $T$ has to satisfy additional conditions to describe a tiling, for instance, the placed lozenges should not overlap or leave untiled areas. If we want our tiling to be $L_1$-periodic, further restrictions need to be imposed:

\begin{Def}\label{Def: Periodic Tiling}
    Let $L_1\subseteq L_0$ be as above. We define a $L_1$-periodic tiling as a map $T\colon L_0 \to \{U, V, W\}$ satisfying the following two axioms:
\begin{enumerate}
\item Periodicity: for all $x\in L_0$ and $y\in L_1$ we have $T(x)=T(x+y)$;
\item Compatibility: for all $ x \in L_0$, exactly one of the following conditions is satisfied:
$$T (x) = W, \quad T (x+u) = V, \quad T (x-v) = U.$$
\end{enumerate}

\end{Def}

It is easy to see in the figure below that if two or three conditions in the second axiom hold simultaneously, then the downwards pointing triangle $x-v\to x-v+u \to x+u\to x-v$ will be matched with two or three upwards pointing triangles, in other words, the placed lozenges will overlap. Conversely, if none of these conditions hold, this triangle will stay unmatched, in other words, the placed lozenges will leave an untiled area.

\[
\begin{tikzpicture}[-,scale=0.8]
\begin{scope}[rotate=-15.6,inner sep=1.5mm]
\begin{scriptsize}


\coordinate (O) at (0, 0, 0);
\coordinate (X) at (1,0,-1);
\coordinate (Y) at (0, -1, 1);
\coordinate (Z) at (-1, 1, 0); 

\VLozenge{(O)}
\ULozenge{($(Z)$)}
\WLozenge{($-1 *(X)$)}

\node at (O) {$\bullet$};
\node at ($(Z)$) {$\bullet$};
\node at ($-1 *(X)$) {$\bullet$};

\node at ($0.15 *(Y)- (X)$) {$x$};
\node at ($0.08 *(Y) - 0.08 *(Z)$) {$x + u$};
\node at ($-0.4 *(X)  + (Z)$) {$x-v$};
\node at ($1.6 *(X)  + (Z)$) {$x-v + u$};

\end{scriptsize}
\end{scope}
\end{tikzpicture}
\]

\begin{Rem}\label{Rem: Tilings and Periodic cuts}
    There is a bijection between $L_1$-periodic cuts on $\hat{Q}$ and $L_1$-periodic tilings. Given a periodic cut $\hat{C} \subseteq \hat{Q}_1$, the corresponding tiling is given by 
    \[ T_{\hat{C}} \colon x \mapsto \begin{cases}
        U & \textrm{ if } (x \to x+u) \in \hat{C} \\
        V & \textrm{ if } (x -v \to x) \in \hat{C} \\
        W & \textrm{ if } (x +u \to x-v) \in \hat{C}, 
    \end{cases} \]
    and given a tiling $T$ we obtain the corresponding cut $\hat{C}_T$ as the set of arrows which do not belong to the boundary of a tile in $T$. Combining this with \Cref{Pro: Periodic cuts and HPPC} we obtain a bijection between $L_1$-periodic tilings in which every type of lozenge occurs, and higher preprojective cuts. 
\end{Rem}

We will say that an arrow is in $T$ if it belongs to the boundary of a tile in $T$. Similarly, we say that a path is in $T$ if all its arrows are in $T$. Through the above reinterpretation, we give the analog of \Cref{Def: Type of a Cut}. Note that if $T$ is a $L_1$-periodic tiling, it is enough to define it on a representative of each class modulo $L_1$ and to consider the induced map $\tilde{T}\colon L_0/L_1 \to \{U, V, W\}$. We will freely switch perspectives between $L_1$-periodic tilings $T$ on $L_0$ and their corresponding maps $\Tilde{T}$ on $L_0/L_1$ when no confusion is possible.

\begin{Def}
    We define the type of a $L_1$-periodic tiling as
\[ \theta(T)=(\theta_1(T),\theta_2(T),\theta_3(T))=(\#\tilde{T}^{-1}(U),\#\tilde{T}^{-1}(V),\#\tilde{T}^{-1}(W)).\] 
Note that this definition only makes sense if $L_1$ is of rank $2$, so that $L_0/L_1$ is finite.
\end{Def}

It will be convenient to generalise the AIR-construction from \Cref{Theo:AIR cuts} to this setup. 

\begin{Rem}\label{Rem: AIR cuts for possibly zero types}
    Suppose that $L_0/L_1$ is cyclic of order $n$, and choose an isomorphism $\varphi \colon L_0/L_1 \to \mathbb{Z}/n\mathbb{Z}$. Then the generators $u$, $v$ and $w$ act on $\mathbb{Z}/n\mathbb{Z}$ by addition inside of $\mathbb{Z}/n\mathbb{Z}$. Let $e_1 \in [0, n)$ be the representative of $\varphi(u + L_1)$, and similarly let $e_2$ and $e_3$ be the standard representatives of $\varphi(v + L_1)$ and $\varphi(w + L_1)$ respectively. Then $\mathbb{Z}/n\mathbb{Z} \simeq \langle \frac{1}{n}(e_1, e_2, e_3) \rangle$, and the quiver $Q$ associated to $L_0/L_1$ is the McKay quiver for $\langle \frac{1}{n}(e_1, e_2, e_3) \rangle$. Assume that $e_1 + e_2 + e_3 = n$. It then follows that an AIR-cut on $Q$ gives an $L_1$-periodic $T$, which is is given by 
    \[ T(x) = \begin{cases}
        V, & \varphi(x-v+ L_1) > \varphi(x+ L_1) \\
        W, & \varphi(x+u+ L_1) > \varphi(x-v+ L_1) \\
        U, & \varphi(x + L_1) > \varphi(x+u+ L_1).
        \end{cases} \]
    Furthermore, even if some of the $e_i$ are zero, we can apply the same formula as for the AIR-cut to obtain an $L_1$-periodic tiling, but it no longer corresponds to a higher preprojective cut. Note that we exclude the case $n=1$ for trivial reasons. Direct counting shows that these tilings are of type $(e_1, e_2, e_3)$: Observe that there are precisely $e_1$ many arrows of type $u$ starting at $x + L_1$ with $\varphi(x + L_1) \in \{ n-e_1, n-e_1 + 1 , \ldots, n-1\}$, which is equivalent to $ \varphi(x + u +L_1) =  \varphi(x + L_1) + e_1 < \varphi(x + L_1)$. The same count applies to arrows of other types. 
\end{Rem}

Next, we introduce height functions, which will be useful throughout. 

\begin{Con}\label{Con: Height function}
Let $T$ be a $L_1$-periodic tiling. Then each vertex of $L_0$ can be given a height value as follows. We set $h_T(0)=0$. For $\alpha \in \{u,v,w \}$, we then define 
\[ h_T(x+\alpha) = \begin{cases}
    h_T(x)+1, & (x \to x+\alpha) \textrm{ is in } T \\
    h_T(x) - 2 & \textrm{ else}.
\end{cases}  \] 
It is known that this gives a well defined height function on all the vertices of $L_0$. Note that this does not descend to a single-valued height function on $L_0/L_1$. One may also notice that any lozenge tiling can be viewed as (a projection of) a pile of cubes. Any visible point in a pile of cubes can be assigned three-dimensional standard coordinates, and then $h_T(x)$ simply counts the sum of these coordinates. For a more detailed overview on piles of cubes and height functions, the reader can consult \cite{OverviewDomandLoz}.
\end{Con}

\begin{Exp}\label{tilingex}
 Let $L_1$ be the lattice defined by $B=(\begin{smallmatrix} 6 & 4 \\ 0 & 2 \end{smallmatrix})$, as in \Cref{Exp: size 12 example with simplex}. The picture below shows a $L_1$-periodic tiling $T$ and a few values of the height function associated to this tiling. The circled nodes indicate points of $L_1$, in particular, the upper left corner is the origin. This tiling has type $\theta(T)=(2,2,8)$. In order to visualise $T$ as an infinite pile of cubes and to see tilings of other types, the reader can consult \Cref{alltypes}
\[
\begin{tikzpicture}[-,scale=0.5]
\begin{scope}[rotate=-15.6,inner sep=1.5mm]
\begin{scriptsize}


\coordinate (O) at (0, 0, 0);
\coordinate (X) at (1,0,-1);
\coordinate (Y) at (0, -1, 1);
\coordinate (Z) at (-1, 1, 0);

\newcommand\U{2};
\newcommand\V{2};
\newcommand\W{8};
\pgfmathsetmacro\result{int(\V+\W)}

\foreach \x in {0,...,5}{
\foreach \y in {1,...,6}{

\pgfmathparse{int(Mod(\U*\x + \V*\y,\U+\V+\W))}

\ifthenelse{\pgfmathresult < \V}{\VLozenge{($\x *(X)+\y *(Y)$)};}{}
\ifthenelse{\pgfmathresult > \V \OR \pgfmathresult = \V \AND \pgfmathresult < \result}{\WLozenge{($\x *(X)+\y *(Y)$)};}{}
\ifthenelse{\pgfmathresult > \result \OR \pgfmathresult = \result}{\ULozenge{($\x *(X)+\y *(Y)$)};}{}

}
}
\ULozenge{($5 *(X)+0 *(Y)$)}

\foreach \x in {0,...,6}{
\foreach \y in {0,...,6}{
\filldraw[white] ($\x *(X)+\y *(Y)$) circle (8pt);
}
}
\node at ($0*(X)+ 0 *(Y)$) {\circled{$0$}}; 
\node at ($1*(X)+ 0 *(Y)$) {$1$}; 
\node at ($2*(X)+ 0 *(Y)$) {$2$};
\node at ($3*(X)+ 0 *(Y)$) {$3$};
\node at ($4*(X)+ 0 *(Y)$) {$4$};
\node at ($5*(X)+ 0 *(Y)$) {$5$};
\node at ($6*(X)+ 0 *(Y)$) {\circled{$3$}};

\node at ($0*(X)+ 1 *(Y)$) {$1$};
\node at ($1*(X)+ 1 *(Y)$) {$2$};
\node at ($2*(X)+ 1 *(Y)$) {$3$};
\node at ($3*(X)+ 1 *(Y)$) {$4$};
\node at ($4*(X)+ 1 *(Y)$) {$5$};
\node at ($5*(X)+ 1 *(Y)$) {$3$};
\node at ($6*(X)+ 1 *(Y)$) {$4$};

\node at ($0*(X)+ 2 *(Y)$) {$2$};
\node at ($1*(X)+ 2 *(Y)$) {$3$};
\node at ($2*(X)+ 2 *(Y)$) {$4$};
\node at ($3*(X)+ 2 *(Y)$) {$5$};
\node at ($4*(X)+ 2 *(Y)$) {\circled{$3$}};
\node at ($5*(X)+ 2 *(Y)$) {$4$};
\node at ($6*(X)+ 2 *(Y)$) {$5$};

\node at ($0*(X)+ 3 *(Y)$) {$3$};
\node at ($1*(X)+ 3 *(Y)$) {$4$};
\node at ($2*(X)+ 3 *(Y)$) {$5$};
\node at ($3*(X)+ 3 *(Y)$) {$3$};
\node at ($4*(X)+ 3 *(Y)$) {$4$};
\node at ($5*(X)+ 3 *(Y)$) {$5$};
\node at ($6*(X)+ 3 *(Y)$) {$6$};

\node at ($0*(X)+ 4 *(Y)$) {$4$};
\node at ($1*(X)+ 4 *(Y)$) {$5$};
\node at ($2*(X)+ 4 *(Y)$) {\circled{$3$}};
\node at ($3*(X)+ 4 *(Y)$) {$4$};
\node at ($4*(X)+ 4 *(Y)$) {$5$};
\node at ($5*(X)+ 4 *(Y)$) {$6$};
\node at ($6*(X)+ 4 *(Y)$) {$7$};

\node at ($0*(X)+ 5 *(Y)$) {$5$};
\node at ($1*(X)+ 5 *(Y)$) {$3$};
\node at ($2*(X)+ 5 *(Y)$) {$4$};
\node at ($3*(X)+ 5 *(Y)$) {$5$};
\node at ($4*(X)+ 5 *(Y)$) {$6$};
\node at ($5*(X)+ 5 *(Y)$) {$7$};
\node at ($6*(X)+ 5 *(Y)$) {$8$};

\node at ($0*(X)+ 6 *(Y)$) {\circled{$3$}};
\node at ($1*(X)+ 6 *(Y)$) {$4$};
\node at ($2*(X)+ 6 *(Y)$) {$5$};
\node at ($3*(X)+ 6 *(Y)$) {$6$};
\node at ($4*(X)+ 6 *(Y)$) {$7$};
\node at ($5*(X)+ 6 *(Y)$) {$8$};
\node at ($6*(X)+ 6 *(Y)$) {\circled{$6$}};









\end{scriptsize}
\end{scope}
\end{tikzpicture}
\]
 
\end{Exp}

\begin{Rem}
\label{heightadd}
 As mentioned in \Cref{Con: Height function}, the height function is not periodic and in general not additive. However, for any $x \in L_0$ and any $y\in L_1$ we have 
$h_T(x+y)=h_T(x)+h_T(y)$ and $h_T(-y)=-h_T(y)$, which in particular implies that $h_T \colon L_1 \to \mathbb{Z}$ is a group morphism, hence $h_T(ay)=ah_T(y)$ for any $a\in \mathbb{Z}$.   
\end{Rem}

\begin{Lem}
\label{heightvalues}
Let $T$ be an $L_1$-periodic tiling of type $(\gamma_1,\gamma_2,\gamma_3)$. Then for any $y=y_1u+y_2v\in L_1$ we have $h_T(y)=y_1+y_2-3\frac{\gamma_1y_1+\gamma_2y_2}{n}$
\end{Lem}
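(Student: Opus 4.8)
The plan is to combine the additivity of the height function on $L_1$ from \Cref{heightadd} with an averaging argument over the fundamental domain $L_0/L_1$. Since $h_T(x+y) = h_T(x) + h_T(y)$ for every $x \in L_0$ and $y \in L_1$, the value $h_T(y)$ equals the height change $h_T(x+y) - h_T(x)$ along any path from $x$ to $x+y$, independently of the chosen representative $x$. Summing this over a set of representatives of the $n = |L_0/L_1|$ classes gives
\[ n\, h_T(y) = \sum_{x \in L_0/L_1} \bigl( h_T(x+y) - h_T(x) \bigr). \]
The idea is to evaluate the right-hand side by fixing, for each $x$, the explicit path $x \to x+u \to \cdots \to x+y_1u \to x+y_1u+v \to \cdots \to x+y_1u+y_2v = x+y$, and then reorganising the resulting sum of per-arrow height increments.

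First I would recall from the correspondence in \Cref{Rem: Tilings and Periodic cuts} that the $u$-arrow starting at a point $p$ is cut (equivalently, not in $T$) precisely when $T(p) = U$, while the $v$-arrow starting at $p$ is cut precisely when $T(p+v) = V$; by \Cref{Con: Height function} a kept arrow contributes $+1$ and a cut arrow contributes $-2$ to the height. Writing $\delta$ for this per-arrow contribution, the height change along the path from $x$ is the sum of $\delta$ over its $y_1$ consecutive $u$-arrows and $y_2$ consecutive $v$-arrows. The crucial observation is that for each fixed step index $j$, as $x$ ranges over representatives the point $x + ju$ also ranges over a full set of representatives, so by $L_1$-periodicity of the cut
\[ \sum_{x \in L_0/L_1} \delta(u\text{-arrow at } x+ju) = \sum_{p \in L_0/L_1} \delta(u\text{-arrow at } p) = (n - \gamma_1) - 2\gamma_1 = n - 3\gamma_1, \]
using that exactly $\gamma_1 = \#\tilde{T}^{-1}(U)$ of the classes carry a cut $u$-arrow. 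The identical computation, with the bijection $p \mapsto p+v$ of $L_0/L_1$ absorbing the shift in the cut-condition $T(p+v) = V$, yields $n - 3\gamma_2$ for the $v$-arrows.

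Summing over the $y_1$ indices $j$ in the $u$-part and the $y_2$ indices in the $v$-part, the right-hand side collapses to $y_1(n - 3\gamma_1) + y_2(n - 3\gamma_2)$, and dividing by $n$ produces exactly
\[ h_T(y) = y_1 + y_2 - 3\,\frac{\gamma_1 y_1 + \gamma_2 y_2}{n}. \]
I expect the only genuine subtlety to be the bookkeeping of signs: when $y_1$ or $y_2$ is negative the corresponding steps are traversed backwards, so each contributes $-\delta$ rather than $\delta$; since there are then $|y_1|$ (resp.\ $|y_2|$) such backward steps, the two sign reversals cancel and the per-direction total remains $y_1(n-3\gamma_1)$ (resp.\ $y_2(n-3\gamma_2)$), so the formula holds verbatim for all integers $y_1, y_2$. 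Everything else is immediate once the translation between the three cut-conditions and the values of $\tilde{T}$ is fixed.
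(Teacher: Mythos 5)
Your proof is correct, and it reaches the formula by a genuinely different bookkeeping than the paper's. The paper proceeds in two steps: it first computes $h_T(o_1u)$ and $h_T(o_2v)$, where $o_1,o_2$ are the orders of $u,v$ in $L_0/L_1$, by observing that the number $\theta_1'$ of $U$-lozenges cut along $x\to x+u\to\cdots\to x+o_1u$ is independent of $x$ (since the total increment equals $h_T(o_1u)$), and then summing $\theta_1'$ over the $n/o_1$ cosets of $\langle u\rangle$ to get $\theta_1'=\gamma_1 o_1/n$; it then extends to arbitrary $y\in L_1$ by scaling by $n$ and using that $h_T$ restricted to $L_1$ is a group morphism (\Cref{heightadd}). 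You instead average the increment $h_T(x+y)-h_T(x)=h_T(y)$ over all $n$ classes simultaneously and exchange the order of summation, so that each of the $y_1$ $u$-steps contributes the class-independent total $n-3\gamma_1$ and each $v$-step contributes $n-3\gamma_2$; your identification of which arrows are cut in terms of $\tilde T$ and the count $\gamma_1=\#\tilde T^{-1}(U)$ (with the shift $p\mapsto p+v$ for the $v$-arrows) is exactly right. Both arguments are double counts of cut arrows via $L_1$-periodicity resting on \Cref{heightadd}; yours is more uniform, avoiding $o_1,o_2$ and the two-step reduction and treating arbitrary $y$ in one pass, at the cost of the sign discussion for negative $y_1,y_2$, which you handle correctly, while the paper's route isolates the cleaner intermediate fact that exactly $\gamma_1 o_1/n$ tiles of type $U$ are crossed per period along any $u$-line.
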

\begin{proof}
 Let $o_1$ and $o_2$ denote the orders of $u$ and $v$ respectively in $L_0/L_1$. In the first step of the proof we will show this equality for $y=o_1u$ and $y=o_2v$, and then handle the general case in the second step.

 Step 1: We would like to show that $h_T(o_1u)=o_1-3\frac{\gamma_1o_1}{n}$, the equality for $o_2v$ holds verbatim.
 Consider the following path in $L_0$ (which is not necessarily a path in $T$): $x\to x+u\to x+2u\to\ldots\to x+o_1u$. Let us compute the height increment between the two endpoints of this path. 
 Every arrow $x+iu\to x+(i+1)u$ gives an increment of $+1$ if this arrow is in $T$, and an increment of $-2$ if it is not in $T$, or, equivalently, if $T(x+iu)=U$. Combining this with \Cref{heightadd}, we obtain $h_T(o_1u)=h_T(x+o_1u)-h_T(x)=o_1-3\theta'_1(x)$, where $\theta'_1(x)=\#\{i\in[0, o_1)\mid T(x+iu)=U\}$ can be seen as the number of lozenges of type $U$ we cut as we travel along this path. From the previous equation we see that $\theta'_1(x)$ is in fact independent of $x$ and thus we may refer to it as $\theta'_1$. Therefore, in order to complete this step, it remains to show that $\theta'_1=\frac{\gamma_1o_1}{n}$. Let $\{x_1,x_2,\ldots,x_{\frac{n}{o_1}}\}$ be a set of representatives of the cosets of $\langle u \rangle$ in $ L_0/ L_1 $. 
Then we have $\gamma_1=\theta'_1(x_1)+\theta'_1(x_2)+\ldots + \theta'_1(x_{\frac{n}{o_1}})=\frac{n}{o_1}\theta'_1$, which implies the desired equality.

Step 2: Now we want to show the equality in the general case. Let $y=y_1u+y_2v$. Using \Cref{heightadd} and keeping in mind that $o_1 \mid n$ and $o_2 \mid n$, we get:

\begin{align*}
 h_T(n(y_1u+y_2v)) &=h_T(ny_1u+ny_2v)\\&=h_T(ny_1u)+h_T(ny_2v)\\&=\frac{ny_1}{o_1}h_T(o_1u)+\frac{ny_2}{o_2}h_T(o_2v)\\&=\frac{ny_1}{o_1}\left(o_1-\frac{3\gamma_1 o_1}{n}\right)+\frac{na_2}{o_2}\left(o_2-\frac{3\gamma_2 o_2}{n}\right)\\ &=ny_1+ny_2-3(\gamma_1y_1+\gamma_2y_2).  
\end{align*}
Since $h_T(ny)=nh_T(y)$, we get the desired identity. 
\end{proof}

We will now proceed to classify all triples of integers which appear as types of $L_1$-periodic tilings.

\begin{Pro}
Let $B$, $L_1$ and $n$ be as above. Let $T$ be a $L_1$-periodic tiling with $\theta(T)=(\gamma_1,\gamma_2,\gamma_3)$. Then $\gamma_1,\gamma_2,\gamma_3\ge 0$, $\gamma_1+\gamma_2+\gamma_3=n$ and \[ (\gamma_1, \gamma_2) B \in n \mathbb{Z}^{1 \times 2}. \]
\end{Pro}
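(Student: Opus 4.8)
The plan is to treat the three assertions in turn, with the divisibility condition being the only one that requires real work. Nonnegativity $\gamma_1,\gamma_2,\gamma_3\ge 0$ is immediate, since by definition each $\gamma_i$ is the cardinality of a subset of $L_0/L_1$. For the sum, I would observe that $\tilde{T}\colon L_0/L_1 \to \{U,V,W\}$ is a single-valued map on a set of size $n=|L_0/L_1|$, so its fibres $\tilde{T}^{-1}(U)$, $\tilde{T}^{-1}(V)$, $\tilde{T}^{-1}(W)$ partition $L_0/L_1$; adding up their cardinalities gives $\gamma_1+\gamma_2+\gamma_3=n$.

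For the divisibility condition the starting point is \Cref{heightvalues}, which for every $y=y_1u+y_2v\in L_1$ gives
\[ h_T(y)=y_1+y_2-3\frac{\gamma_1 y_1+\gamma_2 y_2}{n}. \]
Rearranging, $\frac{\gamma_1 y_1+\gamma_2 y_2}{n}=\frac{y_1+y_2-h_T(y)}{3}$, so it would suffice to prove the congruence $h_T(y)\equiv y_1+y_2 \pmod 3$ for all $y\in L_1$, since that forces the right-hand side to be an integer.

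I expect this congruence to be the crux, and the key observation making it work is that every elementary step of the height function changes $h_T$ by $+1$ modulo $3$. Indeed, by \Cref{Con: Height function} a forward step along any of $u,v,w$ changes the height by $+1$ (when the arrow lies in $T$) or by $-2$ (otherwise), and $-2\equiv +1\pmod 3$; correspondingly a backward step changes it by $-1\pmod 3$. Hence, if a path from the origin to $y$ uses $f$ forward and $b$ backward elementary steps in total, then $h_T(y)\equiv f-b\pmod 3$. Writing the net step counts in the $u$-, $v$-, $w$-directions and using $w=-(u+v)$, one checks that $f-b$ equals $y_1+y_2$ up to a multiple of $3$ coming from the number of $w$-steps, so $h_T(y)\equiv y_1+y_2\pmod 3$; this is automatically path-independent because $h_T$ itself is well defined. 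Substituting back into the displayed identity then yields $\frac{\gamma_1 y_1+\gamma_2 y_2}{n}\in\mathbb{Z}$ for every $y\in L_1$.

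Finally I would apply this to the two generators of $L_1$ recorded by the columns of $B=(\begin{smallmatrix} a_1 & b_1\\ a_2 & b_2\end{smallmatrix})$: taking $(y_1,y_2)=(a_1,a_2)$ gives $n\mid \gamma_1 a_1+\gamma_2 a_2$, and $(y_1,y_2)=(b_1,b_2)$ gives $n\mid \gamma_1 b_1+\gamma_2 b_2$. Since $(\gamma_1,\gamma_2)B=(\gamma_1 a_1+\gamma_2 a_2,\ \gamma_1 b_1+\gamma_2 b_2)$, this is precisely the assertion $(\gamma_1,\gamma_2)B\in n\mathbb{Z}^{1\times 2}$, completing the argument.
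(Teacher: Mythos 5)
Your proof is correct, and it follows the same basic mechanism as the paper's — both rest on \Cref{heightvalues} together with the fact that every elementary step of the height function is $+1$ or $-2$, hence $\equiv 1 \pmod 3$. The difference lies in how the integrality of $\frac{\gamma_1 y_1+\gamma_2 y_2}{n}$ is extracted. The paper first invokes \Cref{Rem: Positive matrix} and the $\SL_2(\mathbb{Z})$-invariance of the condition $(\gamma_1,\gamma_2)B \in n\mathbb{Z}^{1\times 2}$ to reduce to a matrix $B$ with non-negative entries; it then walks along the monotone staircase path $0 \to u \to \cdots \to c_1u \to c_1u+v \to \cdots \to c_1u+c_2v$ and identifies $\frac{\gamma_1 c_1+\gamma_2 c_2}{n}$ with the manifestly integral count $\varepsilon_1+\varepsilon_2$ of lozenges cut along the way. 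Your mod-$3$ congruence $h_T(y)\equiv y_1+y_2 \pmod 3$ achieves the same end for an \emph{arbitrary} $y=y_1u+y_2v\in L_1$, with no sign restriction on $y_1,y_2$, so you can apply it directly to the columns of $B$ and dispense with the normalization step entirely; your bookkeeping of the $w$-steps via $w=-(u+v)$ is the right way to see that the congruence is independent of the chosen path, and path-independence of $h_T$ itself covers the rest. What you lose relative to the paper is the extra information that $\frac{\gamma_1 c_1+\gamma_2 c_2}{n}$ is a non-negative count with a combinatorial meaning, but that is not needed for the statement being proved.
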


\begin{proof}
It is clear that $\gamma_1,\gamma_2,\gamma_3\ge 0$ and $\gamma_1+\gamma_2+\gamma_3=n$. Recall from \Cref{Rem: Positive matrix} that we may choose $B$ with non-negative entries. It suffices to show the claim for this choice of $B$ since clearly the $\SL_2(\mathbb{Z})$-action leaves invariant the sublattice $n\mathbb{Z}^{1\times 2}$, i.e. $ n \mathbb{Z}^{1 \times 2} \cdot \SL_2(\mathbb{Z}) = n \mathbb{Z}^{1\times 2}$. 
In order to prove the proposition, we will show that $\gamma_1c_1+\gamma_2c_2\in n\mathbb{Z}$ for any $c_1,c_2\ge 0$ such that $c_1u+c_2v\in L_1$. This will imply the desired result. 

Consider the following path in $L_0$ (which is not necessarily a path in $T$): $0\to u\to 2u\to\ldots\to c_1u\to c_1u+v\to c_1u+2v\to\ldots\to c_1u+c_2v$. The path from $0$ to $c_1u$ gives a height increment of $c_1-3\varepsilon_1$, where $\varepsilon_1$ is the number of lozenges of type $U$ we cut on the way. Similarly, the path from $c_1u$ to $c_1u+c_2v$ gives an increment of $c_2-3\varepsilon_2$, where $\varepsilon_2$ is the number of lozenges of type $V$ we cut on the way. The total height increment is then $h_T(c_1u+c_2v)=c_1+c_2-3\varepsilon_1-3\varepsilon_2$. On the other hand, we already know from \Cref{heightvalues} that $h_T(c_1u+c_2v)=c_1+c_2-3\frac{\gamma_1c_1+\gamma_2c_2}{n}$. Comparing the two expressions, we obtain $\gamma_1c_1+\gamma_2c_2=n(\varepsilon_1+\varepsilon_2)$, that is, $n \mid (\gamma_1c_1+\gamma_2c_2)$.
\end{proof}    

To prove the existence of tilings of a given type, we use the following construction. 

\begin{Lem}\label{Lem: xi is a hom}
   Let $B$, $L_1$ and $n$ be as above. Let $\gamma = (\gamma_1,\gamma_2,\gamma_3)$ be a triple of integers such that $\gamma_1,\gamma_2,\gamma_3\ge 0$, $\gamma_1+\gamma_2+\gamma_3=n$ and $(\gamma_1, \gamma_2) B \in n \mathbb{Z}^{1 \times 2}$.
   Then 
   \[ \xi_\gamma \colon L_0 \to \mathbb{Z}/n\mathbb{Z}, (x_1u + x_2v) \to (x_1\gamma_1+x_2\gamma_2) \bmod n  \]
   is a group homomorphism with $ \Ker(\xi_\gamma) \supseteq L_1$ and $ \Im(\xi_\gamma)$ is cyclic of order $\frac{n}{\gcd(\gamma_1, \gamma_2, \gamma_3)}$.
\end{Lem}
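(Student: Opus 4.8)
The plan is to exploit that $L_0$ is free abelian of rank $2$ with basis $\{u,v\}$, under which $\xi_\gamma$ becomes the reduction modulo $n$ of a $\mathbb{Z}$-linear functional. This reduces all three assertions to elementary computations: the homomorphism property is automatic from linearity, the kernel containment is a direct translation of the matrix hypothesis, and the order of the image is a $\gcd$ computation inside the cyclic group $\mathbb{Z}/n\mathbb{Z}$.

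First I would verify that $\xi_\gamma$ is a group homomorphism. Identifying $x_1 u + x_2 v \in L_0$ with $(x_1,x_2) \in \mathbb{Z}^2$, the map sends $(x_1,x_2) \mapsto (x_1 \gamma_1 + x_2 \gamma_2) \bmod n$, which is manifestly additive; note this step uses no hypothesis on $\gamma$ at all. For the inclusion $L_1 \subseteq \Ker(\xi_\gamma)$, I would recall that $L_1 = \langle a_1 u + a_2 v,\, b_1 u + b_2 v \rangle$, so that its generators are the two columns of $B$. Evaluating $\xi_\gamma$ on these generators yields exactly $\gamma_1 a_1 + \gamma_2 a_2$ and $\gamma_1 b_1 + \gamma_2 b_2$, i.e.\ the two entries of the row vector $(\gamma_1, \gamma_2) B$. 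The hypothesis $(\gamma_1, \gamma_2) B \in n\mathbb{Z}^{1 \times 2}$ asserts precisely that both entries vanish modulo $n$, so $\xi_\gamma$ kills a generating set of $L_1$, hence all of $L_1$.

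For the image, I would argue that $\Im(\xi_\gamma) \leq \mathbb{Z}/n\mathbb{Z}$ is automatically cyclic, and that it is generated by $\xi_\gamma(u) = \gamma_1$ and $\xi_\gamma(v) = \gamma_2$. Thus $\Im(\xi_\gamma) = \langle \gamma_1, \gamma_2 \rangle = \langle \gcd(\gamma_1, \gamma_2, n) \rangle$, which has order $n/\gcd(\gamma_1, \gamma_2, n)$. The single point that genuinely requires the standing relation $\gamma_1 + \gamma_2 + \gamma_3 = n$—and the only step I would flag as more than bookkeeping—is the identity $\gcd(\gamma_1, \gamma_2, n) = \gcd(\gamma_1, \gamma_2, \gamma_3)$. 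This holds because any common divisor of $\gamma_1, \gamma_2, \gamma_3$ divides their sum $n$, while conversely any common divisor of $\gamma_1, \gamma_2, n$ divides $\gamma_3 = n - \gamma_1 - \gamma_2$; the two greatest common divisors therefore coincide, and the asserted order $n/\gcd(\gamma_1, \gamma_2, \gamma_3)$ follows.

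I expect no real obstacle here: once the basis identification $L_0 \cong \mathbb{Z}^2$ is in place, the entire argument is formal, and the only mild subtlety is recognising that the hypothesis on $B$ is exactly the statement that the generators of $L_1$ lie in the kernel, together with the elementary $\gcd$ manipulation above.
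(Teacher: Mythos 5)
Your proposal is correct and follows essentially the same route as the paper's proof: additivity is immediate, the kernel containment is exactly the observation that $\xi_\gamma$ applied to the generators of $L_1$ yields the entries of $(\gamma_1,\gamma_2)B$, and the image is identified as $\langle \gcd(\gamma_1,\gamma_2,n)\rangle$ with $\gcd(\gamma_1,\gamma_2,n)=\gcd(\gamma_1,\gamma_2,\gamma_3)$ via $\gamma_3 = n-\gamma_1-\gamma_2$. No gaps.
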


\begin{proof}
    It is clear that $\xi_\gamma$ is a group homomorphism. We first investigate 
    \[ \Ker(\xi_\gamma) =\{x_1u+x_2v\in L_0\mid  (x_1\gamma_1+x_2\gamma_2) \equiv_n 0 \}. \]
    Recall that any $y \in L_1$ can be written as $y = B  (y_1, y_2)^\top$ for some $(y_1, y_2)^\top \in \mathbb{Z}^2$. Hence we have   
    \[ \xi_\gamma(y) = (\gamma_1, \gamma_2) B  (y_1, y_2)^\top \equiv_n 0,   \]
    and therefore $\Ker(\xi_\gamma) \supseteq L_1$. To compute the order of $\Im(\xi_\gamma)$, note that we can write $d = \gcd(\gamma_1, \gamma_2, \gamma_3) = \gcd(\gamma_1, \gamma_2, n)$ as an integer combination of $\gamma_1$, $\gamma_2$ and $n$. That means there exist numbers $x_1, x_2 \in \mathbb{Z}$ such that $\xi_\gamma(x_1u + x_2 v) \equiv_n d $. Since $d \mid \gamma_1$ and $d \mid \gamma_2$, it follows that the subgroup $\Im(\xi_\gamma) \subseteq  \mathbb{Z}/n\mathbb{Z}$ is generated by the coset of $d$, and therefore has order $\frac{n}{\gcd(\gamma_1, \gamma_2, \gamma_3)}$.
\end{proof}

\begin{Pro}
\label{tilingconstr}
Let $B$, $L_1$, $n$, $d$ and $(\gamma_1, \gamma_2, \gamma_3)$ be as in \Cref{Lem: xi is a hom} and let $L_2 = \Ker(\xi_\gamma)$. Then there exists an $L_2$-periodic tiling $T_\gamma$ of type $(\frac{\gamma_1}{d},\frac{\gamma_2}{d}, \frac{\gamma_3}{d} )$. In particular, $T_\gamma$ is $L_1$-periodic of type $(\gamma_1, \gamma_2, \gamma_3)$. 
\end{Pro}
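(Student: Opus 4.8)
The plan is to realize the quotient $L_0/L_2$ as a cyclic group carrying a reduced copy of the AIR-data, and then invoke the generalized AIR-construction of \Cref{Rem: AIR cuts for possibly zero types}. Set $N = n/d$. By \Cref{Lem: xi is a hom} we already know $L_1 \subseteq L_2 = \Ker(\xi_\gamma)$ and that $L_0/L_2 \cong \Im(\xi_\gamma)$ is cyclic of order $N$. The first step is to exhibit an explicit isomorphism $\varphi \colon L_0/L_2 \to \mathbb{Z}/N\mathbb{Z}$ sending $u, v, w$ to $\gamma_1/d, \gamma_2/d, \gamma_3/d$. To do this I would consider the homomorphism $\psi \colon L_0 \to \mathbb{Z}/N\mathbb{Z}$, $x_1 u + x_2 v \mapsto (x_1 \tfrac{\gamma_1}{d} + x_2 \tfrac{\gamma_2}{d}) \bmod N$, which is well-defined since $d \mid \gamma_1, \gamma_2$. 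Comparing divisibility of $x_1\gamma_1 + x_2\gamma_2$ by $n$ with divisibility of $(x_1\gamma_1 + x_2\gamma_2)/d$ by $N$ shows $\Ker(\psi) = \Ker(\xi_\gamma) = L_2$, and $\psi$ is surjective because $\gcd(\gamma_1/d, \gamma_2/d, N) = \gcd(\gamma_1/d, \gamma_2/d, \gamma_3/d) = 1$, using $\gamma_1/d + \gamma_2/d + \gamma_3/d = N$. Hence $\psi$ descends to the required isomorphism $\varphi$, under which $w = -(u+v)$ maps to $-(\gamma_1/d + \gamma_2/d) \equiv \gamma_3/d \pmod N$.

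Second, I would feed this data into \Cref{Rem: AIR cuts for possibly zero types}, with the roles of $L_1$, $n$, and the exponents $e_i$ there played by $L_2$, $N$, and $\gamma_i/d$. The hypotheses are met: $L_0/L_2$ is cyclic of order $N$, $\varphi$ is an isomorphism onto $\mathbb{Z}/N\mathbb{Z}$ with $\varphi(u+L_2) = \gamma_1/d$, $\varphi(v+L_2) = \gamma_2/d$, $\varphi(w+L_2)=\gamma_3/d$, and these residues sum to $N$. The remark's formula then produces an $L_2$-periodic tiling $T_\gamma$, and the counting argument quoted there shows $\theta(T_\gamma) = (\gamma_1/d, \gamma_2/d, \gamma_3/d)$. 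When $N = 1$ the formula degenerates (the case excluded in the remark); here $L_2 = L_0$, exactly one $\gamma_i$ equals $n$ while the other two vanish, and the constant tiling $T_\gamma \equiv U$ (resp. $V$, $W$) is $L_0$-periodic, satisfies the compatibility axiom of \Cref{Def: Periodic Tiling} trivially, and has type a permutation of $(1,0,0)$, so this case is dispatched separately.

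Finally, for the ``in particular'' statement I would use $L_1 \subseteq L_2$: any $L_2$-periodic tiling is a fortiori $L_1$-periodic. Since $T_\gamma$ factors through $L_0/L_2$ and the projection $L_0/L_1 \to L_0/L_2$ has fibers of constant size $\lvert L_2/L_1\rvert = n/N = d$, each count $\#\tilde{T}_\gamma^{-1}(U)$ taken over $L_0/L_1$ equals $d$ times the corresponding count over $L_0/L_2$. Thus the $L_1$-type of $T_\gamma$ is $d \cdot (\gamma_1/d, \gamma_2/d, \gamma_3/d) = (\gamma_1, \gamma_2, \gamma_3)$, as claimed.

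I do not expect any single step to pose a serious obstacle: once $L_0/L_2$ is recognized as the cyclic group carrying the reduced AIR-data, existence of the tiling comes essentially for free from \Cref{Rem: AIR cuts for possibly zero types}. The only point requiring genuine care is the bookkeeping of the first step, namely verifying $\Ker(\psi) = L_2$ and surjectivity of $\psi$ so that $\varphi$ takes exactly the values on $u, v, w$ needed to match the remark, together with the degenerate case $N = 1$.
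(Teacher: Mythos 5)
Your proposal is correct and follows essentially the same route as the paper's proof: both factor $\xi_\gamma$ through the cyclic quotient $L_0/L_2 \cong \mathbb{Z}/(n/d)\mathbb{Z}$ with $u,v,w$ mapping to $\gamma_1/d,\gamma_2/d,\gamma_3/d$, invoke the generalized AIR-construction of \Cref{Rem: AIR cuts for possibly zero types}, treat the degenerate case $n/d=1$ by a constant tiling, and deduce the $L_1$-type by the index-$d$ count. The only cosmetic difference is that you build the isomorphism explicitly as $\psi$ and verify its kernel and surjectivity by hand, whereas the paper obtains it as the map $\overline{\xi_\gamma}$ induced by the factorization of $\xi_\gamma$.
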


\begin{proof}
    We write $n' = \frac{n}{d}$. If $n' = 1$, one of the $\gamma_i$ is $n$ and $L_2 = L_0$, hence we take $T_\gamma$ to be the constant tiling of the corresponding type. Otherwise, by \Cref{Lem: xi is a hom}, the morphism $\xi_\gamma$ has a cyclic image $\Im(\xi_\gamma) \simeq \mathbb{Z}/n'\mathbb{Z}$. Hence, we have that $L_0/L_2$ is cyclic of order $n'$, and we will use the AIR-cut to construct an $L_2$-periodic tiling, see \Cref{Rem: AIR cuts for possibly zero types}. To do this, we factor $\xi_\gamma$ as the projection $q \colon L_0 \to L_0/L_2$, followed by the induced isomorphism $\overline{\xi_\gamma}$ to $\mathbb{Z}/n'\mathbb{Z}$, and the inclusion of $\mathbb{Z}/n'\mathbb{Z}$ as $\Im(\xi_\gamma) \leq \mathbb{Z}/n\mathbb{Z}$. 
    \[\begin{tikzcd}
	{L_0} & {\mathbb{Z}/n\mathbb{Z}} \\
	{L_0/L_2} & {\mathbb{Z}/n'\mathbb{Z}}
	\arrow["\xi_\gamma", from=1-1, to=1-2]
	\arrow["q"', from=1-1, to=2-1]
	\arrow["{\overline{\xi_\gamma}}"', "\sim", from=2-1, to=2-2]
	\arrow["{\cdot d}"', from=2-2, to=1-2]
    \end{tikzcd}\]
    Let us compute the representation of $\mathbb{Z}/n'\mathbb{Z}$ given by the isomorphism $\overline{\xi_\gamma}$. We have that $\xi_\gamma(u) = \xi_\gamma(1 \cdot u + 0 \cdot v) = \gamma_1$, and similarly $\xi_\gamma(v) = \gamma_2$ and $\xi_\gamma(w) = \gamma_3$, and hence $\overline{\xi_\gamma}(u+L_2) = \frac{\gamma_1}{d}$. Therefore, the quiver for $L_0/L_2$ is the McKay quiver of the group $G = \langle \frac{1}{n'}(\frac{\gamma_1}{d},\frac{\gamma_2}{d},\frac{\gamma_3}{d} )\rangle $. Recall that we assumed $\gamma_1 + \gamma_2 + \gamma_3 = n$, and hence 
    $ \frac{\gamma_1}{d} + \frac{\gamma_2}{d} + \frac{\gamma_3}{d} = \frac{n}{d} = n' $. Therefore, there exists an AIR-cut for $G$, and the discussion in \Cref{Rem: AIR cuts for possibly zero types} shows that this cut gives an $L_2$-periodic tiling $T_\gamma$ of type $(\frac{\gamma_1}{d},\frac{\gamma_2}{d},\frac{\gamma_3}{d} )$. Since we assumed that $(\gamma_1, \gamma_2) B \in n \mathbb{Z}^{1 \times n}$, it follows from \Cref{Lem: xi is a hom} that $L_2 \supseteq L_1$. Therefore, the tiling $T_\gamma$ is also $L_1$-periodic, and since $L_0/L_2$ has index $d$ in $L_0/L_1$, it follows that the type of $T_\gamma$ as an $L_1$-periodic tiling is $d \cdot (\frac{\gamma_1}{d},\frac{\gamma_2}{d},\frac{\gamma_3}{d} ) = (\gamma_1,\gamma_2,\gamma_3 )$.  
\end{proof}

We arrive to the following result.
\begin{Theo}
\label{Theo: Divisibility conditions for tilings}

 Let $B = (\begin{smallmatrix} a_1 & b_1 \\ a_2 & b_2 \end{smallmatrix})$ be a $2$-rank integer matrix as in \Cref{Pro: Periodicity matrix}, and let $L_1$ be the lattice defined by $B$. Let $n= \det B =|L_0/L_1|$. Then $(\gamma_1,\gamma_2,\gamma_3)$ is the type of some $L_1$-periodic tiling if and only if $\gamma_1,\gamma_2,\gamma_3\ge 0$, $\gamma_1+\gamma_2+\gamma_3=n$ and \[ (\gamma_1, \gamma_2) B \in n \mathbb{Z}^{1 \times 2}. \] 
\end{Theo}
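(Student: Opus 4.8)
The plan is to establish the equivalence by treating the two implications separately, each being essentially a restatement of a result already proved in this subsection.

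For necessity, I would take an arbitrary $L_1$-periodic tiling $T$ of type $(\gamma_1,\gamma_2,\gamma_3)$. The bounds $\gamma_i \ge 0$ and the identity $\gamma_1+\gamma_2+\gamma_3 = n$ are immediate, since by definition $\gamma_1,\gamma_2,\gamma_3$ are the cardinalities $\#\tilde{T}^{-1}(U),\#\tilde{T}^{-1}(V),\#\tilde{T}^{-1}(W)$, and these three fibres partition the finite quotient $L_0/L_1$, which has $n$ elements. The divisibility condition $(\gamma_1,\gamma_2)B \in n\mathbb{Z}^{1\times 2}$ is precisely the conclusion of the proposition immediately preceding \Cref{Lem: xi is a hom}; its proof computes the height increment $h_T(c_1u+c_2v)$ for a generator $c_1u+c_2v\in L_1$ in two ways — once via the explicit formula of \Cref{heightvalues}, and once by counting the lozenges crossed along a staircase path — and comparison forces $n \mid (\gamma_1 c_1 + \gamma_2 c_2)$. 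Applying this to the two columns of $B$, which generate $L_1$, yields the stated membership.

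For sufficiency, I would note that any triple with $\gamma_i \ge 0$, $\gamma_1+\gamma_2+\gamma_3 = n$, and $(\gamma_1,\gamma_2)B \in n\mathbb{Z}^{1\times 2}$ satisfies verbatim the hypotheses of \Cref{Lem: xi is a hom}. Hence \Cref{tilingconstr} applies directly and constructs an $L_1$-periodic tiling $T_\gamma$ of type $(\gamma_1,\gamma_2,\gamma_3)$: setting $d=\gcd(\gamma_1,\gamma_2,\gamma_3)$, the homomorphism $\xi_\gamma$ has kernel $L_2 \supseteq L_1$ with cyclic quotient $L_0/L_2$, and the generalised AIR-cut of \Cref{Rem: AIR cuts for possibly zero types} on this cyclic quotient produces a tiling of type $(\tfrac{\gamma_1}{d},\tfrac{\gamma_2}{d},\tfrac{\gamma_3}{d})$, which as an $L_1$-periodic tiling has type $(\gamma_1,\gamma_2,\gamma_3)$.

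Because both implications are already available, I do not expect a genuine obstacle in the synthesis itself. The substantive work sits upstream: the additivity of the height function on $L_1$ together with its explicit evaluation (\Cref{heightadd} and \Cref{heightvalues}) on the necessity side, and the reduction to a cyclic quotient followed by the AIR-construction (\Cref{tilingconstr}) on the sufficiency side. The only point demanding care is to verify that the hypotheses of \Cref{Lem: xi is a hom} coincide exactly with the three stated conditions, so that \Cref{tilingconstr} may be invoked without any gap.
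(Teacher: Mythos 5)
Your proposal is correct and matches the paper's treatment exactly: the theorem is stated as the synthesis of the preceding unnamed proposition (necessity, via the two computations of the height increment along a path representing a lattice vector in $L_1$) and \Cref{tilingconstr} (sufficiency, via $\xi_\gamma$ and the generalised AIR-construction on the cyclic quotient $L_0/L_2$), and the paper supplies no further argument beyond citing these. Your check that the hypotheses of \Cref{Lem: xi is a hom} coincide with the three stated conditions is the only gluing needed, and you have it.
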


\begin{Rem}
 We would like to point out that \Cref{Theo: Divisibility conditions for tilings} is a restatement of \cite[Theorem 6]{Martin}. In order to see this, one can explicitly construct the affine bijection sending the fundamental triangle of possible fingerprints from \cite{Martin} to the triangle of potential types, that is, $\langle(n,0,0),(0,n,0),(0,0,n)\rangle$. Then the condition from \cite{Martin} translates to the condition in \Cref{Theo: Divisibility conditions for tilings}. 
\end{Rem}

Restricting the above theorem to those types which correspond to higher preprojective cuts provides the proof of \Cref{Theo: Divisibility conditions}. 

\begin{proof}[Proof of \Cref{Theo: Divisibility conditions}]
    Translate \Cref{Theo: Divisibility conditions for tilings} via \Cref{Rem: Tilings and Periodic cuts}. 
\end{proof}

Next, we give an explicit formula for the tiling $T_\gamma$. 

\begin{Pro}\label{Pro: General xi-formula for T}
Let $T_\gamma$ be the tiling constructed in \Cref{tilingconstr}. Then $T_\gamma$ is given by
\[    T_\gamma(x)= 
\begin{cases}
    V,& \text{if } \xi_{\gamma}(x) \in [0,\gamma_2),\\
    W,& \text{if } \xi_{\gamma}(x) \in [\gamma_2,\gamma_2+\gamma_3),\\
   U,& \text{if } \xi_{\gamma}(x) \in [\gamma_2+\gamma_3,n).
    
    \end{cases}
\]
\end{Pro}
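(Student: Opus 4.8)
The plan is to unwind the definition of $T_\gamma$ given in the proof of \Cref{tilingconstr} and simply read off the explicit formula from the AIR-construction. Recall that $T_\gamma$ is the $L_2$-periodic tiling obtained by applying \Cref{Rem: AIR cuts for possibly zero types} to the cyclic quotient $L_0/L_2 \simeq \mathbb{Z}/n'\mathbb{Z}$, where $n' = n/d$ and the identifying isomorphism is $\overline{\xi_\gamma}$, under which $u$, $v$, $w$ act by addition of $e_1 = \gamma_1/d$, $e_2 = \gamma_2/d$, $e_3 = \gamma_3/d$ respectively, with $e_1 + e_2 + e_3 = n'$. The formula in that remark then reads, with representatives taken in $[0, n')$,
\[ T_\gamma(x) = V \text{ if } \overline{\xi_\gamma}(x-v+L_2) > \overline{\xi_\gamma}(x+L_2), \quad T_\gamma(x) = U \text{ if } \overline{\xi_\gamma}(x+L_2) > \overline{\xi_\gamma}(x+u+L_2), \]
and $T_\gamma(x) = W$ otherwise. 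So the task reduces to rewriting these comparisons of $\overline{\xi_\gamma}$-values as membership of $\xi_\gamma(x)$ in the three stated intervals.

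The key bridge is the commutative triangle from the proof of \Cref{tilingconstr}, which gives $\xi_\gamma = (\cdot d) \circ \overline{\xi_\gamma} \circ q$. Since $d = \gcd(\gamma_1, \gamma_2, \gamma_3)$ divides $\gamma_1$, $\gamma_2$ and $n$ (as recorded in \Cref{Lem: xi is a hom}), the image of $\xi_\gamma$ lies in $d\mathbb{Z}/n\mathbb{Z}$, and multiplication by $d$ carries the representative of $\overline{\xi_\gamma}(x+L_2)$ in $[0, n')$ to the representative of $\xi_\gamma(x)$ in $[0, n)$. I would record that this scaling is strictly order-preserving, so that $\overline{\xi_\gamma}(x+L_2) > \overline{\xi_\gamma}(y+L_2)$ holds exactly when $\xi_\gamma(x) > \xi_\gamma(y)$. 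This lets me replace every $\overline{\xi_\gamma}$ in the AIR-formula by $\xi_\gamma$, at the cost of scaling the relevant intervals by $d$.

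With $a := \xi_\gamma(x) \in [0, n)$, I would then use $\xi_\gamma(x+u) = a + \gamma_1 \bmod n$ and $\xi_\gamma(x-v) = a - \gamma_2 \bmod n$ and analyse the modular wrap-around. The condition $\xi_\gamma(x) > \xi_\gamma(x+u)$ holds precisely when adding $\gamma_1$ overflows, i.e. $a \geq n - \gamma_1 = \gamma_2 + \gamma_3$, yielding the $U$-interval $[\gamma_2 + \gamma_3, n)$; dually $\xi_\gamma(x-v) > \xi_\gamma(x)$ holds precisely when subtracting $\gamma_2$ underflows, i.e. $a < \gamma_2$, yielding the $V$-interval $[0, \gamma_2)$. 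Since the three cases of the AIR-formula partition $L_0$ and the three intervals partition $[0,n)$, the $W$-case is forced to be $[\gamma_2, \gamma_2 + \gamma_3)$, matching the claim. I expect the main obstacle to be purely bookkeeping: justifying that $\xi_\gamma(x)$ is always a multiple of $d$ and that the passage from representatives in $[0, n')$ to $[0, n)$ preserves order, together with checking that the formula still behaves correctly in the degenerate situations — some $e_i = 0$, and the constant case $n' = 1$ treated separately in \Cref{tilingconstr} — where the corresponding intervals simply become empty or fill all of $[0, n)$.
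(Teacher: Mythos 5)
Your proposal is correct and follows essentially the same route as the paper's own proof: factor $\xi_\gamma = (\cdot d)\circ\overline{\xi_\gamma}\circ q$, observe that scaling by $d$ preserves the order of standard representatives, and translate the inequalities of the AIR-formula from \Cref{Rem: AIR cuts for possibly zero types} into membership of $\xi_\gamma(x)$ in the three intervals. Your case-matching (overflow of $+\gamma_1$ giving $U$ on $[\gamma_2+\gamma_3,n)$, underflow of $-\gamma_2$ giving $V$ on $[0,\gamma_2)$, and $W$ by elimination) is in fact the consistent one — the paper's worked case pairs the interval $[\gamma_2,\gamma_2+\gamma_3)$ with the condition $\overline{\xi_\gamma}(x+L_2)>\overline{\xi_\gamma}(x+u+L_2)$ and the conclusion $T_\gamma(x)=U$, which is a slip, since that condition corresponds to $[\gamma_2+\gamma_3,n)$ exactly as you compute.
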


\begin{proof}
If $T_\gamma$ is a constant tiling, the above formula holds trivially. Otherwise, let $\xi_{\gamma}(x)\in [\gamma_2,\gamma_2+\gamma_3)$. Since $\xi_{\gamma} (x) =  d \cdot \overline{\xi_\gamma} (q(x))$, this is equivalent to $\overline{\xi_\gamma}(x+L_2)\in [\frac{\gamma_2}{d},\frac{\gamma_2+\gamma_3}{d})$. This is the same as $\overline{\xi_\gamma}(x+L_2)>\overline{\xi_\gamma}(x+u+L_2)$, which, by \Cref{Rem: AIR cuts for possibly zero types}, is equivalent to the arrow $x\rightarrow x+u$ being removed, in other words, $T_{\gamma}(x)=U$. One can similarly consider the other two intervals.
\end{proof}

\begin{Exp}\label{alltypes}
Let $L_1$ be the lattice from \Cref{Exp: size 12 example with simplex}, i.e. $L_1$ is defined by $B=(\begin{smallmatrix} 6 & 4 \\ 0 & 2 \end{smallmatrix})$. There, we used \Cref{Theo: Divisibility conditions for tilings} to conclude that the only possible types of $L_1$-periodic tilings are: $(12,0,0)$, $(0,12,0)$, $(0,0,12)$, $(0,6,6)$, $(6,0,6)$, $(6,6,0)$, $(8,2,2)$, $(2,8,2)$, $(2,2,8)$, $(4,4,4)$. For each of these types, a tiling of the corresponding type is drawn in the picture below. Note that in general there exist multiple tilings of a given type. The tilings given below are those constructed in \Cref{tilingconstr} and have been computed using \Cref{Pro: General xi-formula for T}, and they are arranged according to the positioning of their types in the simplex in \Cref{Exp: size 12 example with simplex}.
 
\[
\begin{tikzpicture}[-,scale=0.15]
\begin{scope}[rotate=-15.6,inner sep=1.5mm]
\begin{scriptsize}


\coordinate (O) at (0, 0, 0);
\coordinate (X) at (1,0,-1);
\coordinate (Y) at (0, -1, 1);
\coordinate (Z) at (-1, 1, 0);

\foreach \y in {36, 34}{
    \pgfmathsetmacro\lshift{int((36 - \y) + 15) }
    \pgfmathsetmacro\rshift{int(21 - (36 - \y)) }
    \foreach \x in {15, ..., \rshift}{
    \fVLozenge{($\x *(X)+\y *(Y)$)};
    }

}
\foreach \y in {35, 33}{
    \pgfmathsetmacro\lshift{int((36 - \y) + 15) }
    \pgfmathsetmacro\rshift{int(19 - (35 - \y)) }
    \foreach \x in {14, ..., \rshift}{
    \fWLozenge{($\x *(X)+\y *(Y)$)};
    }
}

\foreach \x in {-4, -2, ..., 2}{
\ifthenelse{\x < 0}
{ \pgfmathtruncatemacro\uplim{22 + \x} 
    \foreach \y in {15, ..., \uplim}
    {
        \fWLozenge{($ \x *(X) + \y *(Y) $)};
        \fULozenge{($ \x  *(X) + (X) + \y *(Y) $)};
        \fULozenge{($ \x  *(X) + (X) + \y *(Y) + (Y) $)};
    }
}
{   \pgfmathtruncatemacro\lowlim{\x + 16}
    \foreach \y in {\lowlim, ..., 21}
    {
        \fWLozenge{($ \x *(X) + \y *(Y) $)};
        \fWLozenge{($ \x *(X) + \y *(Y) + (Y)$)};
        \fULozenge{($ \x  *(X) + (X) + \y *(Y) $)};
    }
}
}


\foreach \y in {33, 35, ..., 39}{
\ifthenelse{\y < 36}
{ \pgfmathtruncatemacro\rlim{\y - 16 } 
    \foreach \x in {14, ..., \rlim}
    {
        \fWLozenge{($ \x *(X) + \y *(Y) $)};
        \fVLozenge{($ \x  *(X) + (X) + \y *(Y) + (Y) $)};
        \fVLozenge{($ \x  *(X) + (X) + (X) + \y *(Y) + (Y) $)};
    }
}
{   \pgfmathtruncatemacro\llim{\y - 22}
    \foreach \x in {\llim, ..., 20}
    {
        \fWLozenge{($ \x *(X) + \y *(Y) $)};
        \fVLozenge{($ \x  *(X) + (X) + \y *(Y) + (Y) $)};
        \fWLozenge{($ \x *(X) + (X) + \y *(Y) $)}
    }
}
}

 
\foreach \x in {-3, -1, ..., 3}{
\ifthenelse{\x < 0}
{ \pgfmathtruncatemacro\llim{15 - \x } 
    \foreach \z in {\llim, ..., 21}
    {
        \fULozenge{($ \x *(X) - \z *(Z) $)};
        \fULozenge{($\x *(X) - \z *(Z) + (Z) $)};
        \fVLozenge{($ \x  *(X) - \z *(Z) + (Y) $)};
    }
}
{   \pgfmathtruncatemacro\ulim{21 - \x }
    \foreach \z in {15, ..., \ulim}
    {
        \fULozenge{($ \x *(X) - \z *(Z) $)};
        \fVLozenge{($ \x  *(X) - \z *(Z) + (Y) $)};
        \fVLozenge{($ \x  *(X) - \z *(Z) + (Y) + (Z) $)};
    }
}
}


\foreach \x in {-4, ..., -1}{
\pgfmathtruncatemacro\uplim{40 + \x}
\foreach \y in {33, ..., \uplim}{
\fWLozenge{($ \x *(X) + \y *(Y) $)};
}}

\foreach \x in {0, 1, ..., 3}{
\pgfmathtruncatemacro\lowlim{34 + \x}
\foreach \y in {\lowlim, ..., 40}{
\fWLozenge{($ \x *(X) + \y *(Y) $)};
}}

\foreach \x in {4,..., 8}{
\foreach \h in {-1, 0,1}{
    \fULozenge{($\x *(X) - \h *(Y) + \h *(Z) + 12 *(Y)$) };
    \fULozenge{($\x *(X) - \h *(Y) + \h *(Z) + 12 *(Y) + (Z)$) };
    \fULozenge{($\x *(X) - \h *(Y) + \h *(Z) + 12 *(Y) + (Y)$) };
}
}

\foreach \h in {-1, 0, 1}{
\fCube{($6 *(X) + 12 *(Y) + \h *(Y) - \h *(Z)  $)};
}
\fCube{($3 *(X) + 12 *(Y)$)};
\fCube{($9 *(X) + 12 *(Y)$)};
\fWLozenge{($3 *(X) + 12 *(Y) + (Z)$)};
\fWLozenge{($ 4 *(X) + 9 *(Y)$)};

\fVLozenge{($3 *(X) + 13 *(Y) - (Z)$)};
\fVLozenge{($8 *(X) + 16 *(Y)$)};

\foreach \z in {3, ..., 7}{
\foreach \h in {-1, 0, 1}{
\fWLozenge{($ 25 *(Y)  - \z *(Z) + \h *(Y) - \h *(X) $)}; 
\fWLozenge{($ 25 *(Y)  - \z *(Z) + \h *(Y) - \h *(X) + (X)$)};
\fWLozenge{($ 25 *(Y)  - \z *(Z) + \h *(Y) - \h *(X) + (Y) $)};
}
}

\fCube{($3 *(X)  +27 *(Y)$)};
\fCube{($3 *(X)  +27 *(Y) - 6 *(Z)$)};
\foreach \h in {-1, 0, 1}{
\fCube{($ 6 *(X) + 30 *(Y) + \h *(X) - \h *(Y) $)};
}

\fVLozenge{($ 4 *(X) + 32 *(Y)$)};
\fVLozenge{($ 6 *(X) + 32 *(Y) - 2 *(Z)$)};
\fULozenge{($ 7 *(X)  + 28 *(Y) $)};
\fULozenge{($ 5 *(X)  + 28 *(Y) - 4 *(Z) $)};

\foreach \y in {29, ..., 33}{
\foreach \h in {-1, 0, 1}{
\fVLozenge{($ \y *(Y) + 24 *(X) + \h *(Z) - \h *(X) $)};
\fVLozenge{($ \y *(Y) + 24 *(X) + \h *(Z) - \h *(X) + (Z)$)};
\fVLozenge{($ \y *(Y) + 24 *(X) + \h *(Z) - \h *(X) + (X)$)};
}
}

\foreach \h in {-1, 0, 1}{
\fCube{ ($ 24 *(X) + 30 *(Y) + \h *(X) - \h *(Z)$)};
}

\fCube{($ 24 *(X) + 33 *(Y)$)};
\fCube{($ 24 *(X) + 27 *(Y)$)};

\fWLozenge{($ 20 *(X) + 29 *(Y)$)}
\fWLozenge{($ 22 *(X) + 27 *(Y)$)}

\fULozenge{($ 25 *(X) + 28 *(Y)$)}
\fULozenge{($ 27 *(X) + 32 *(Y)$)}


\foreach \x in {33, ..., 36}{
\pgfmathtruncatemacro\uplim{4 + \x}
\foreach \y in {33, ..., \uplim}{
\fVLozenge{($ \x *(X) + \y *(Y) $)};
}}

\foreach \x in {37, ..., 39}{
\pgfmathtruncatemacro\lowlim{ \x- 3}
\foreach \y in {\lowlim, ..., 40}{
\fVLozenge{($ \x *(X) + \y *(Y) $)};
}}

\foreach \x in {-4, ..., -1}{
\pgfmathtruncatemacro\uplim{4 + \x}
\foreach \y in {-3, ..., \uplim}{
\fULozenge{($ \x *(X) + \y *(Y) $)};
}}

\foreach \x in {0, ..., 3}{
\pgfmathtruncatemacro\lowlim{ \x- 3}
\foreach \y in {\lowlim, ..., 3}{
\fULozenge{($ \x *(X) + \y *(Y) $)};
}}

\foreach \x in {9, ..., 12}{
\pgfmathtruncatemacro\uplim{ \x + 15 }
\foreach \y in {21, ..., \uplim}{

\pgfmathtruncatemacro\ok{ int(Mod(\x + \y, 3))  }
\ifthenelse{\ok = 0}{
\fCube{($\x *(X) + \y *(Y)$) };}{}
}
}
\foreach \x in {13, ..., 15}{
\pgfmathtruncatemacro\lowlim{ \x + 9 }
\foreach \y in {\lowlim, ..., 27}{
\pgfmathtruncatemacro\ok{ int(Mod(\x + \y, 3))  }
\ifthenelse{\ok = 0}{
\fCube{($\x *(X) + \y *(Y)$) };}{}
}
}

\fULozenge{($ 13 *(X) + 22 *(Y) $)};
\fULozenge{($ 15 *(X) + 26 *(Y) $)};
\fVLozenge{($ 14 *(X) + 28 *(Y) $)};
\fVLozenge{($ 10 *(X) + 26 *(Y) $)};
\fWLozenge{($  8 *(X) + 23 *(Y) $)};
\fWLozenge{($ 10 *(X) + 21 *(Y) $)};

\foreach \x in {0,6,...,30}{
\pgfmathtruncatemacro\shift{\x + 6}
\foreach \y in {\x, \shift,...,36}{
\node at ($\x *(X) + \y *(Y)$) {$\bullet$};
}
}

\foreach \x in {2,8,...,30}{
\pgfmathtruncatemacro\shift{\x + 6}
\foreach \y in {\x, \shift,...,36}{
\node at ($\x *(X) + \y+2 *(Y)$) {$\bullet$};
}
}

\foreach \x in {4,10,...,24}{
\pgfmathtruncatemacro\shift{\x + 6}
\foreach \y in {\x, \shift,...,30}{
\node at ($\x *(X) + \y+4 *(Y)$) {$\bullet$};
}
}

\node at ($ 36 *(X) + 36 *(Y) $) {$\bullet$};
\node at ($ 32 *(X) + 34 *(Y) $) {$\bullet$};
\node at ($28 *(X) + 32 *(Y)$) {$\bullet$};

\node at ($4 *(X) + 2 *(Z) $) {$\bullet$};
\node at ($-4 *(X) - 2 *(Y) $) {$\bullet$};
\node at ($4 *(X) + 2 *(Y) $) {$\bullet$};
\node at ($-4 *(X) - 2 *(Z) $) {$\bullet$};
\node at ($2 *(X) +4 *(Z) $) {$\bullet$};

\node at ($2 *(X) + 4 *(Z) + 36 *(Y) $) {$\bullet$};
\node at ($2 *(X) + 40 *(Y) $) {$\bullet$};
\node at ($4 *(X) + 38 *(Y) $) {$\bullet$};
\node at ($-2 *(X) + 2 *(Z) + 36 *(Y)$) {$\bullet$};
\node at ($-2 *(X) + 38 *(Y) $) {$\bullet$};

\node at ($-32 *(Z) + 2 *(X) $) {$\bullet$};
\node at ($-34 *(Z) + 4 *(X) $) {$\bullet$};
\node at ($-38 *(Z) + 2 *(X) $) {$\bullet$};
\node at ($-38 *(Z) - 4 *(X) $) {$\bullet$};
\node at ($-40 *(Z) - 2 *(X) $) {$\bullet$};

\node at ($ 16 *(X) + 38 *(Y) $) {$\bullet$};
\node at ($ 22 *(X) + 38 *(Y) $) {$\bullet$};
\node at ($ 20 *(X) + 40 *(Y) $) {$\bullet$};

\node at ($ -2 *(X) + 20 *(Y) $) {$\bullet$};
\node at ($ -4 *(X) + 16 *(Y) $) {$\bullet$};
\node at ($ -2 *(X) + 14 *(Y) $) {$\bullet$};

\node at ($ 2 *(X) - 14 *(Z) $) {$\bullet$};
\node at ($ 4 *(X) - 16 *(Z) $) {$\bullet$};
\node at ($ 2 *(X) - 20 *(Z) $) {$\bullet$};

\filldraw[white] ($ 6 *(Y)$) circle (12pt);
\filldraw[white] ($6 *(X) + 6 *(Y)$) circle (12pt);
\filldraw[white] ($ 12 *(Y)$) circle (12pt);
\filldraw[white] ($12 *(X) + 12 *(Y)$) circle (12pt);
\filldraw[white] ($6 *(X) + 18 *(Y)$) circle (12pt);
\filldraw[white] ($12 *(X) + 18 *(Y)$) circle (12pt);
\filldraw[white] ($0 *(X) + 24 *(Y)$) circle (12pt);
\filldraw[white] ($6 *(X) + 24 *(Y)$) circle (12pt);
\filldraw[white] ($18 *(X) + 24 *(Y)$) circle (12pt);
\filldraw[white] ($24 *(X) + 24 *(Y)$) circle (12pt);
\filldraw[white] ($0 *(X) + 30 *(Y)$) circle (12pt);
\filldraw[white] ($12 *(X) + 30 *(Y)$) circle (12pt);
\filldraw[white] ($18 *(X) + 30 *(Y)$) circle (12pt);
\filldraw[white] ($30 *(X) + 30 *(Y)$) circle (12pt);
\filldraw[white] ($6 *(X) + 36 *(Y)$) circle (12pt);
\filldraw[white] ($12 *(X) + 36 *(Y)$) circle (12pt);
\filldraw[white] ($24 *(X) + 36 *(Y)$) circle (12pt);
\filldraw[white] ($30 *(X) + 36 *(Y)$) circle (12pt);

\end{scriptsize}
\end{scope}
\end{tikzpicture}
\]

\end{Exp}

Let us make some observations about the construction of $T_\gamma$, see also \Cref{Cor: Tilting equiv to skewed AIR} for representation-theoretic consequences. 

\begin{Rem}\label{Rem: Our tilings are lifted}
    For arbitrary $L_1 \leq L_0$, we found a lattice $L_2 \supseteq L_1$ such that $L_0/L_2$ is cyclic, and used the AIR-cut on the associated quivers to produce non-constant $L_2$-periodic tilings. The resulting $L_1$-periodic tiling then corresponds to a cut that arises in a procedure analogous to \Cref{Cor: Lifting strategy}. We therefore obtain a strengthened version of the proof of \Cref{Theo:SL3 Classification}: For every type of higher preprojective cut, we can construct one that arises as the skewed higher preprojective cut of an AIR-cut. 
\end{Rem}

Furthermore, the tiling $T_\gamma$ from \Cref{tilingconstr} obtains the maximal number of sinks and sources.

\begin{Def}
    We call a point $x$ in $L_0$ a \emph{source} resp. \emph{sink} in the tiling $T$ if it is a source or a sink in the corresponding cut quiver $\hat{Q}_{\hat{C}_T}$. Similarly, we call a point $x + L_1$ in $L_0/L_1$ a source or a sink if $x$ is one in $L_0$. 
\end{Def}

Note that a source or a sink arises in a tiling precisely when three lozenges of three different types meet in a point. Clearly, for any tiling of type $(\gamma_1,\gamma_2,\gamma_3)$ the number of sources in $L_0/L_1$ is bounded above by $\min(\gamma_1,\gamma_2,\gamma_3)$ since any source requires a lozenge of each type. The same holds for sinks. We will now show that it is possible to attain both of these maxima simultaneously:

\begin{Pro}
    Let $(\gamma_1,\gamma_2,\gamma_3)$ be as in \Cref{tilingconstr}, and let $T_{\gamma}$ be the tiling constructed in the corresponding proof. Then $T_{\gamma}$ has $\min(\gamma_1,\gamma_2,\gamma_3)$ sources and $\min(\gamma_1,\gamma_2,\gamma_3)$ sinks.
\end{Pro}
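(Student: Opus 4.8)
The plan is to convert the geometric notions of source and sink into purely arithmetic conditions on the homomorphism $\xi_\gamma$ from \Cref{Lem: xi is a hom}, and then count cosets. The two ingredients are the explicit formula for $T_\gamma$ in \Cref{Pro: General xi-formula for T} and the dictionary between the tiling map and the removed arrows $\hat{C}$ from \Cref{Rem: Tilings and Periodic cuts}.

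First I would unwind that dictionary to record, for a fixed vertex $x$, which of the incident arrows lie in $\hat{C}$. Reading off \Cref{Rem: Tilings and Periodic cuts}, the type-$u$ arrow $x \to x+u$ lies in $\hat{C}$ iff $T(x) = U$; the type-$v$ arrow $x \to x+v$ lies in $\hat{C}$ iff $T(x+v) = V$; and the type-$w$ arrow $x \to x+w$ lies in $\hat{C}$ iff $T(x-u) = W$. A vertex $x$ is a source in $\hat{Q}_{\hat{C}}$ precisely when its three incoming arrows are all removed, which by the above translates into
\[ T(x-u) = U, \quad T(x) = V, \quad T(x+v) = W, \]
while $x$ is a sink precisely when its three outgoing arrows are removed, i.e.
\[ T(x) = U, \quad T(x+v) = V, \quad T(x-u) = W. \]

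Next I substitute the formula of \Cref{Pro: General xi-formula for T}. Writing $s = \xi_\gamma(x) \in [0,n)$ and using $\xi_\gamma(x\pm u) = s \pm \gamma_1$ and $\xi_\gamma(x+v) = s + \gamma_2$ modulo $n$, each of the three source conditions becomes an interval membership for $s$. Handling the modular reductions, the condition $T(x)=V$ gives $s \in [0,\gamma_2)$, the condition $T(x-u)=U$ (i.e.\ $s-\gamma_1 \in [n-\gamma_1,n)$) forces $s < \gamma_1$, and the condition $T(x+v)=W$ forces $s < \gamma_3$; intersecting yields $s \in [0, m)$ with $m = \min(\gamma_1,\gamma_2,\gamma_3)$. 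The same bookkeeping for the sink conditions collapses them to $s \in [n-m, n)$. I expect the only delicate point to be tracking the wraparounds $\bmod\, n$, but each of the three constraints is monotone in $s$, so this is routine rather than conceptual.

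Finally I count the solutions in one period. Since $L_1 \subseteq \Ker(\xi_\gamma) = L_2$, the map $\xi_\gamma$ descends to $\overline{\xi}_\gamma \colon L_0/L_1 \to \mathbb{Z}/n\mathbb{Z}$ whose image is the cyclic group of order $n/d$ from \Cref{Lem: xi is a hom}, so each value it attains (and it attains every multiple of $d$) has exactly $d$ preimages in $L_0/L_1$. As $d \mid m$ (because $m$ is one of the $\gamma_i$ and $d=\gcd(\gamma_1,\gamma_2,\gamma_3)$), the interval $[0,m)$ contains exactly $m/d$ multiples of $d$, giving $(m/d)\cdot d = m$ sources; the interval $[n-m,n)$ likewise contains $m/d$ multiples of $d$, giving $m$ sinks. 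This equals the a priori upper bound $\min(\gamma_1,\gamma_2,\gamma_3)$ recalled before the statement, so $T_\gamma$ realises both maxima simultaneously, as claimed.
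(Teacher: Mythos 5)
Your proposal is correct and follows essentially the same route as the paper: both arguments characterise sources and sinks by the value of $\xi_\gamma$ landing in an initial (resp.\ terminal) segment of length $\min(\gamma_1,\gamma_2,\gamma_3)$ and then count preimages. The only cosmetic difference is that you work directly on $L_0/L_1$ with the formula of \Cref{Pro: General xi-formula for T} and count fibres of $\xi_\gamma$, whereas the paper first counts on $L_0/L_2$ via the AIR description and then multiplies by the index $d = \gcd(\gamma_1,\gamma_2,\gamma_3)$.
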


\begin{proof}
    We use the same notation as in the proof of \Cref{tilingconstr}, i.e. we set $L_2 = \Ker(\xi_\gamma)$, $d = \gcd(\gamma_1, \gamma_2, n)$, $n'=\frac{n}{d}$ and we write $\gamma_i' = \frac{\gamma_i}{d}$. We consider $T_\gamma$ as an $L_2$-periodic tiling of type $(\gamma_1', \gamma_2', \gamma_3'  )$. Recall that $\xi_\gamma$ factors so that $\xi_\gamma(x) = d \cdot \overline{\xi_\gamma}(x + L_2)$, where $\overline{\xi_\gamma} \colon L_0/L_2 \to \mathbb{Z}/n'\mathbb{Z}$ is the induced isomorphism. Then, by \Cref{Rem: AIR cuts for possibly zero types}, an arrow $x + L_2 \to x + u + L_2$ of type $u$ is not in $T_\gamma$ if and only if $\overline{\xi_\gamma}(x + L_2) > \overline{\xi_\gamma}(x + u + L_2)$, which in turn is the case if and only if $\overline{\xi_\gamma}(x) \in \{ n-\gamma_1', n-\gamma_1' + 1 , \ldots, n-1  \}$. Similarly, arrows $x + L_2 \to x+v + L_2$ and $x + L_2 \to x+w + L_2$ are not in $T_\gamma$ if and only if $\overline{\xi_\gamma}(x) \in \{ n-\gamma_2',  \ldots, n-1  \}$ and $\overline{\xi_\gamma}(x) \in \{ n-\gamma_3', \ldots, n-1  \}$, respectively. A sink is therefore precisely a vertex $x + L_2$ with $\overline{\xi_\gamma}(x) \in \{ n-\min(\gamma_1', \gamma_2', \gamma_3'), \ldots, n-1  \}$. Hence, there are precisely $\min(\gamma_1', \gamma_2', \gamma_3')$ sinks in $L_0/L_2$. Similarly, one can show that the sources are given precisely by the points $x+L_2$ with $\overline{\xi_\gamma}(x) \in \{0, 1, \ldots, \min(\gamma_1', \gamma_2', \gamma_3')-1 \}$, and hence there are also $\min(\gamma_1', \gamma_2', \gamma_3')$ sources in $T_\gamma$ when seen as an $L_2$-periodic tiling. 
    Since $L_0/L_2$ has index $d$ in $L_0/L_1$, it follows that $T_\gamma$ as an $L_1$-periodic tiling has $d \cdot \min(\gamma_1', \gamma_2', \gamma_3') = \min(\gamma_1, \gamma_2, \gamma_3)$ sinks and equally many sources. 
\end{proof}

\subsection{Mutation and flips}\label{SSec: Mutation and flips}
Recall that a source or a sink $x$ in a tiling $T$ arises when three lozenges of different types meet in the point $x$. In this case, we can produce a new tiling $\mu_x(T)$ of the same type by a procedure called a \emph{flip}. For a source, we can replace the three meeting lozenges with the corresponding configuration for a sink, and vice versa. If a flip is performed at $x \in L_0$, we perform it at all $L_1$-translates of $x$ simultaneously. 

\[
\begin{tikzpicture}[-,scale=0.8]
\begin{scope}[rotate=-15.6,inner sep=1.5mm]
\begin{scriptsize}

\coordinate (O) at (0, 0, 0);
\coordinate (X) at (1,0,-1);
\coordinate (Y) at (0, -1, 1);
\coordinate (Z) at (-1, 1, 0); 

\VLozenge{(-2, 0, 2)}
\ULozenge{(-3,0,3)}
\WLozenge{(-2,-1,3)}

\WLozenge{(1,0,-1)}
\ULozenge{(2,0,-2)}
\VLozenge{(2,-1,-1)}

\draw ($-0.8 *(X) $) edge[<->, "flip"]  ($ 0.8 *(X) $);

\end{scriptsize}
\end{scope}
\end{tikzpicture}
\]

\begin{Rem}
Note that a flip of a tiling $T$ corresponds precisely to a mutation of the corresponding cut $C_T$ as in \Cref{Def: Mutation of cuts}.     
\end{Rem}

We will prove that all tilings of a given type are related by a sequence of flips. Note that the following lemma excludes the case of $0$ being a source or a sink. However, this case will not be of relevance for us.

\begin{Lem}\label{Lem: Flipping height function}
    Given a tiling $T$ of type $\theta(T) = (\gamma_1, \gamma_2, \gamma_3)$ with $\gamma_1, \gamma_2, \gamma_3 > 0$ and a source $z \neq 0$ in $T$, denote $T' = \mu_z(T)$ the flipped tiling. Then the height function of $T'$ coincides with that of $T$, except at the points $z + L_1$. More precisely, we have 
    \[ h_{T'}(x) = \begin{cases}
        h_T(x), & x \not \in z + L_1 \\ 
        h_T(x) +3, & \textrm{else}.
    \end{cases} \]
    Similarly, for a sink $z \neq 0$ in $T$ and the flipped tiling $T' = \mu_z(T)$ we have 
    \[ h_{T'}(x) = \begin{cases}
        h_T(x), & x \not \in z + L_1 \\ 
        h_T(x) -3, & \textrm{else}.
    \end{cases} \]
\end{Lem}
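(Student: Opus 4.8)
The plan is to reduce everything to a purely local computation around the flipped vertex and then globalise using the normalisation $h_T(0)=0$ together with the periodicity from \Cref{heightadd}. First I would make the combinatorial meaning of a source explicit. Recall that an arrow lies in $T$ exactly when it bounds a tile, so the cut arrows are the internal diagonals of the lozenges. A source $z$ in $T$ is then a vertex at which the three incoming arrows $z-u\to z$, $z-v\to z$, $z-w\to z$ are all cut (they are the diagonals of the three lozenges meeting at $z$), while the three outgoing arrows $z\to z+u$, $z\to z+v$, $z\to z+w$ all lie in $T$. The flip $\mu_z$ re-tiles only the hexagon formed by the six triangles around $z$, and by definition the analogous hexagon around each point of $z+L_1$; it turns the source configuration into the sink configuration, so the three incoming spokes become boundary edges and the three outgoing spokes become diagonals. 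Since each of the six outer edges of such a hexagon bounds an inner lozenge in both configurations, it stays in $T$; hence the flip changes $T$-membership of exactly the six spokes incident to each point of $z+L_1$ and of no other edge.

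Next I would pass to the difference $\delta = h_{T'}-h_T$ and track its increments along arrows. Along an arrow whose $T$-membership is unchanged, $\delta$ has increment $0$; an arrow passing from lying in $T$ to being cut changes its height increment from $+1$ to $-2$ and contributes $-3$, while an arrow passing from cut to lying in $T$ contributes $+3$. In the source case each outgoing spoke $z\to z+\alpha$ gives $\delta(z+\alpha)-\delta(z)=-3$ and each incoming spoke $z-\alpha\to z$ gives $\delta(z)-\delta(z-\alpha)=+3$, so $\delta(z\pm\alpha)=\delta(z)-3$ for $\alpha\in\{u,v,w\}$. Thus all six neighbours of $z$ carry the same value of $\delta$, which is $3$ less than $\delta(z)$.

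It then remains to pin down the constant value of $\delta$ off the flip orbit. I would first note that $\delta$ is $L_1$-periodic: for $y\in L_1$, \Cref{heightadd} gives $h_T(x+y)=h_T(x)+h_T(y)$ and likewise for $T'$, while \Cref{heightvalues} shows $h_T(y)=h_{T'}(y)$ because the flip preserves the type $(\gamma_1,\gamma_2,\gamma_3)$; hence $\delta(x+y)=\delta(x)$. Since every arrow not incident to $z+L_1$ leaves $\delta$ unchanged, and since $0\notin z+L_1$ together with $u,v,w\notin L_1$ (no summand $\rho_i$ is trivial in the relevant setting, so $z+L_1$ is a set of pairwise non-adjacent vertices) guarantees that $L_0\setminus(z+L_1)$ is connected through such arrows, $\delta$ is constant on $L_0\setminus(z+L_1)$. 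The normalisation $\delta(0)=h_{T'}(0)-h_T(0)=0$ forces this constant to be $0$, and the neighbour computation then gives $\delta(z)=3$; by periodicity $\delta\equiv 3$ on all of $z+L_1$, which is the claimed formula. The sink case is entirely dual: the flip turns outgoing diagonals into boundaries and incoming boundaries into diagonals, reversing every sign, so $\delta\equiv -3$ on $z+L_1$ and $\delta\equiv 0$ elsewhere.

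The main obstacle is the local bookkeeping of the first two paragraphs: one must correctly match the source and sink configurations to the in-$T$ versus cut status of the six spokes, verify that the flip is genuinely local (the hexagons around distinct translates in $z+L_1$ share no spoke, again using $u,v,w\notin L_1$, and the six outer edges never change status), and confirm the resulting $\pm 3$ jumps. Once these are in place, the global step is a routine connectivity-and-normalisation argument.
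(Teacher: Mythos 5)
Your proof is correct and follows essentially the same route as the paper's: a local computation of how the $+1$ versus $-2$ height increments change on the six spokes at a flipped vertex, combined with the observation that no other arrow changes status, then globalised via the normalisation $h(0)=0$ and $L_1$-periodicity. The only organisational difference is that the paper lets a path from $0$ pass \emph{through} points of $z+L_1$ and checks the net increment $-2+1=+1-2$ is unchanged, whereas you detour \emph{around} them using connectivity of $L_0\setminus(z+L_1)$ and then recover $\delta(z)=3$ from its six neighbours; both are valid.
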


\begin{proof}
    We treat the case of $z$ being a source, the sink case is analogous. Recall that the height function at $x$ was defined in terms of paths in $L_0$ from $0$ to $x$. If $x \not \in z + L_1$, consider an arbitrary path $0 \to p_1 \to p_2 \to \cdots \to x$. Now suppose that this path passes through some $p_i \in z + L_1$, and that this is the first time. Since a flip only changes the tiling $T$ at lozenges which meet at $z + L_1$, this means that $h_T(p_j) = h_{T'}(p_j)$ for all $j < i$. In $T$, we have that $p_i$ is a source, while in $T'$ it is a sink, hence we see that 
    \[ h_T(p_{i+1}) = h_T(p_{i-1}) -2 + 1 = h_{T'}(p_{i-1}) + 1 - 2 = h_{T'}(p_{i+1}).   \]
    From this we see that $h_T(x) = h_{T'}(x)$ for all $x \not \in z + L_1$. 
    Now, consider the point $z-u$, for which we have $h_T(z-u) = h_{T'}(z-u)$. By definition of the height function, and since $z$ is a source in $T$, we have 
    \[ h_{T'}(z) - h_T(z)  = h_{T'}(z-u) + 1 - (h_T(z-u) - 2 ) = 3. \qedhere \]
\end{proof}

The statement of the following lemma follows from the proof of \Cref{Pro: HPPCut iff positive and homogeneous}, but we give a purely combinatorial proof using height functions for the reader's convenience. 

\begin{Lem}\label{Lem: Positive type is acyclic}
Let $T$ be a tiling of type $\theta(T) = (\gamma_1, \gamma_2, \gamma_3)$ such that $\gamma_1, \gamma_2, \gamma_3 > 0$. Then there is no infinite path in $T$. Equivalently, the cut quiver for $L_0 / L_1$ is acyclic.  
\end{Lem}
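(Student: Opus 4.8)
The plan is to argue by contradiction with the height function of \Cref{Con: Height function}, using that along any arrow lying in $T$ the height increases by exactly $1$. I first record why the two formulations coincide: the cut quiver $Q_C$ on $L_0/L_1$ is finite, so it is acyclic if and only if it has no infinite path, and under the covering $q\colon\hat{Q}\to Q$ an infinite path in $Q_C$ lifts to an infinite path in $T$ (the arrows of $Q_C$ are exactly the images of those not in $\hat{C}$), while an infinite path in $T$ projects to an infinite walk in the finite quiver $Q_C$ and hence runs through a cycle. It therefore suffices to exclude cycles in $Q_C$.

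Assume $Q_C$ has a cycle. Lifting it through $q$ yields a path all of whose arrows lie in $T$, running from some $x\in L_0$ to $x+y$ with $y\in L_1\setminus\{0\}$. Sorting its arrows by type, let $a$, $b$, $c$ be the numbers of arrows of types $u$, $v$, $w=-(u+v)$; these are nonnegative and the length is $\ell=a+b+c>0$. The net displacement is $y=(a-c)u+(b-c)v$, so writing $y=y_1u+y_2v$ we have $y_1=a-c$ and $y_2=b-c$.

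The heart of the argument is to evaluate $h_T(y)$ in two ways. Since every arrow of the path lies in $T$, the height increases by $1$ at each step, and by the additivity of \Cref{heightadd} this gives $h_T(y)=\ell=a+b+c$. On the other hand \Cref{heightvalues} supplies the closed formula $h_T(y)=y_1+y_2-3\frac{\gamma_1y_1+\gamma_2y_2}{n}$. Substituting $y_1=a-c$, $y_2=b-c$, equating the two expressions, and using $\gamma_1+\gamma_2+\gamma_3=n$, everything collapses to
\[ c\,\gamma_3=-(\gamma_1 a+\gamma_2 b). \]
Positivity of the type now finishes the proof: the left-hand side is $\geq 0$ and the right-hand side is $\leq 0$, so both vanish, and since $\gamma_1,\gamma_2,\gamma_3>0$ this forces $a=b=c=0$, contradicting $\ell>0$.

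I expect the only genuinely delicate point to be the bookkeeping that turns a cycle in $Q_C$ into a lifted path closing up at a nonzero $L_1$-translate; once that is set up, the two evaluations of $h_T(y)$ and the final sign comparison are routine. It is worth emphasising that the hypothesis $\gamma_i>0$ is used essentially in the last step, consistent with the fact that the tilings with a vanishing coordinate (the black simplex points in \Cref{Exp: size 12 example with simplex}) have non-acyclic, infinite-dimensional degree-$0$ parts.
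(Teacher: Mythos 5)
Your proof is correct and takes essentially the same route as the paper's: both evaluate $h_T(y)$ once as the path length (each arrow lying in $T$ raises the height by exactly $1$) and once via the closed formula of \Cref{heightvalues}, then combine positivity of the type with nonnegativity of the arrow counts to force the path to be empty. The only difference is cosmetic bookkeeping --- the paper solves for the arrow counts $A_1,A_2,A_3$ and shows $A_1\gamma_1+A_2\gamma_2+A_3\gamma_3=0$, whereas you collapse the two height evaluations directly to $c\,\gamma_3=-(\gamma_1 a+\gamma_2 b)$.
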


\begin{proof}
 Assume there is an infinite path in $T$. Since $L_0/L_1$ is a finite group, there will exist $x\to\ldots\to x+y$ such that $y\in L_1$ and $y\not=0$. Let $y=y_1u+y_2v$. Independently of the tiling $T$, we know from \Cref{heightvalues} that $h_T(y)=y_1+y_2-3p(y)$, where $p(y)=\frac{\gamma_1y_1+\gamma_2y_2}{n}$. We also know that $h_T(x+y)-h_T(x)=h_T(y)$. Therefore, independently of $T$, the height increment on $x\to\ldots\to x+y$ is $y_1+y_2-3p(y)$. Now, since we are only travelling in the positive directions within $T$, the height increases exactly by $1$ each step. Therefore, the path $x\to\ldots\to x+y$ has exactly $y_1+y_2-3p(y)$ steps. Assume that $A_1$ of them are $u$-arrows, $A_2$ are $v$-arrows and the remaining $A_3=y_1+y_2-3p(y)-A_1-A_2$ are $w$-arrows. Then 
 \begin{align*}
     y_1u+y_2v &=A_1u+A_2v+(y_1+y_2-3p(y)-A_1-A_2)w \\
     &=A_1u+A_2v+(y_1+y_2-3p(y)-A_1-A_2)(-u-v)\\
     &=A_1u+A_2v+(A_1+A_2+3p(y)-y_1-y_2)(u+v)\\
     &=(2A_1+A_2+3p(y)-y_1-y_2)u+(A_1+2A_2+3p(y)-y_1-y_2)v.
 \end{align*}
 Comparing the $u$- and $v$-coefficients, we obtain $2A_1+A_2+3p(y)=2y_1+y_2$ and $A_1+2A_2+3p(y)=y_1+2y_2$. Adding the two equations gives $A_1+A_2=y_1+y_2-2p(y)$ and thus $A_1=y_1-p(y)$, $A_2=y_2-p(y)$ and therefore $A_3=-p(y)$. Now, 
 \begin{align*}
A_1\gamma_1+A_2\gamma_2+A_3\gamma_3&=\gamma_1(y_1-p(y))+\gamma_2(y_2-p(y))+\gamma_3(-p(y))\\
&=\gamma_1y_1+\gamma_2y_2-p(y)(\gamma_1+\gamma_2+\gamma_3) \\
&=\gamma_1y_1+\gamma_2y_2-np(y)=0.
 \end{align*}
 Since $\gamma_1,\gamma_2,\gamma_3>0$ and $A_1,A_2,A_3\ge 0$, we conclude $A_1=A_2=A_3=0$, which implies that the path $x\to\ldots\to x+y$ is empty, in other words, $y=0$, which is a contradiction.
\end{proof}

The following has been proven in more generality in the language of dimer models, see \cite[Theorem 5.6]{Nakajima}. The argument we use below was pointed out to the authors by Fabio Toninelli, see \cite[Lemma 2.6]{Toninelli} for the finite planar version. A similar argument to the planar case was also used in \cite{IyamaOppermann} to describe $2$-APR tilts of $2$-representation finite algebras of type $A$.  

\begin{Theo}\label{Theo: Flips are transitive}
Let $T_1$ and $T_2$ be two $L_1$-periodic tilings of type $(\gamma_1,\gamma_2,\gamma_3)$ with $\gamma_1,\gamma_2,\gamma_3>0$. Then $T_1$ and $T_2$ are connected by a sequence of flips.    
\end{Theo}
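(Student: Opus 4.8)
The plan is to argue entirely through the height functions of \Cref{Con: Height function}, exploiting the fact that the type of a tiling fixes its height values on $L_1$, so that comparing two tilings of the same type becomes a finite monovariant argument. Fix $T_1, T_2$ of type $(\gamma_1,\gamma_2,\gamma_3)$ with all $\gamma_i > 0$ and set $g = h_{T_1} - h_{T_2}$. By \Cref{heightvalues} the value $h_T(y)$ for $y \in L_1$ depends only on the type, so $h_{T_1}$ and $h_{T_2}$ agree on $L_1$; combined with the additivity in \Cref{heightadd} this gives $g(x+y) = g(x)$ for all $x \in L_0$ and $y \in L_1$, so $g$ descends to a function on the finite group $L_0/L_1$. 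Moreover, since every arrow changes the height by $+1$ or $-2$, both congruent to $1$ modulo $3$, we have $h_T(x) \equiv \#(\text{steps from } 0 \text{ to } x) \pmod 3$ independently of $T$, whence $g$ takes values in $3\mathbb{Z}$. Finally $g(0)=0$, so if $g \not\equiv 0$ then $g$ attains either a strictly positive maximum $M \geq 3$ or a strictly negative minimum.

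The heart of the proof is the following local claim: if $M = \max g > 0$ and $S = \{x \in L_0 : g(x) = M\}$, then some $x \in S$ is a sink of $T_1$. Indeed, for any outgoing arrow $x \to x+\alpha$ lying in $T_1$ (i.e.\ a boundary arrow, an arrow of the cut quiver $Q_{C_1}$) one has $h_{T_1}(x+\alpha) = h_{T_1}(x)+1$, while always $h_{T_2}(x+\alpha) \leq h_{T_2}(x)+1$, so $g(x+\alpha) \geq g(x)$; when $x \in S$ this forces $x+\alpha \in S$. Thus if no point of $S$ were a sink of $T_1$, every point of $S$ would emit a boundary arrow into $S$, producing arbitrarily long paths in $T_1$; since $S/L_1$ is finite, such a path repeats a vertex modulo $L_1$ and yields a cycle in the cut quiver, contradicting the acyclicity of \Cref{Lem: Positive type is acyclic}. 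Hence $S$ contains a sink $x$ of $T_1$, and $x \notin L_1$ because $g$ vanishes on $L_1 \ni 0$ but equals $M>0$ on $S$, so the flip $\mu_x$ is admissible and the excluded case $z=0$ of \Cref{Lem: Flipping height function} does not arise. The symmetric statement, that a strictly negative minimum of $g$ forces a source of $T_1$, follows by reversing all arrows.

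With the claim in hand I would run the monovariant $\Phi(T) = \sum_{\bar{x} \in L_0/L_1} |h_T(\bar{x}) - h_{T_2}(\bar{x})|$, a finite nonnegative integer. If $\Phi(T_1) > 0$ then $g \not\equiv 0$: choosing a sink at a positive maximum and applying \Cref{Lem: Flipping height function}, the flip lowers $h_{T_1}$ by $3$ on the orbit $x+L_1$ and leaves it unchanged elsewhere, so there $g$ drops from $M \geq 3$ to $M-3 \geq 0$ and $\Phi$ decreases by $3$; if instead all values of $g$ are nonpositive, the symmetric move at a source raises the height by $3$ and again decreases $\Phi$. Since $\Phi$ is a nonnegative integer that strictly decreases under these flips, after finitely many steps $\Phi = 0$, i.e.\ $g \equiv 0$ and $h_{T_1} = h_{T_2}$; as the height function records along each arrow whether it is cut, equal height functions force $T_1 = T_2$, and the flips performed connect $T_1$ to $T_2$. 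I expect the main obstacle to be the local claim, specifically using acyclicity (\Cref{Lem: Positive type is acyclic}) to promote a maximum of $g$ to an actual flippable sink; the congruence $g \in 3\mathbb{Z}$ is also essential, since it guarantees $M \geq 3$, so that the downward flip keeps $g$ nonnegative and genuinely lowers $\Phi$.
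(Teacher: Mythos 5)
Your proof is correct and follows essentially the same route as the paper: both work with the height difference $h_{T_1}-h_{T_2}$ (integer-valued and $L_1$-periodic by \Cref{heightvalues} and the mod-$3$ increment), locate a flippable sink away from $L_1$ using monotonicity of the difference along $T_1$-paths together with the acyclicity of \Cref{Lem: Positive type is acyclic}, and terminate by a strictly decreasing monovariant. The only (cosmetic) differences are that you use $\sum|g|$ and handle positive maxima and negative minima in one loop, whereas the paper first eliminates the positive part and then swaps the roles of the two tilings.
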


\begin{proof}
Let $h(x)=\frac{h_{T_1}(x)-h_{T_2}(x)}{3}$. Every arrow $x\to x+u$ in $L_0$ gives a height increment of either $+1$ or $-2$ depending on whether it belongs to the corresponding tiling. In any case, such an arrow gives a height increment of $1 \bmod 3$ independently of the tiling. One can extend this argument to any path in $L_0$. This shows that $h(x)$ is an integer-valued function. We claim that $h(x)$ is $L_1$-periodic. First note that for every $x\in L_0$ and every $y\in L_1$ we have  $3h(x+y)-3h(x)=(h_{T_1}(x+y)-h_{T_2}(x+y))-(h_{T_1}(x)-h_{T_2}(x))=(h_{T_1}(x+y)-h_{T_1}(x))-(h_{T_2}(x+y)-h_{T_2}(x))=h_{T_1}(y)-h_{T_2}(y)$.
Next, recall from \Cref{heightvalues} that the height of a point in $L_1$ only depends on the \emph{type} of the tiling. Since $T_1$ and $T_2$ have the same type, we conclude that $h_{T_1}(y)=h_{T_2}(y)$. It also follows that $h(L_1) = 0$. 

Since $h(x)$ is $L_1$-periodic and integer-valued, the map $h$ descends to a map from the quotient $\tilde{h}\colon L_0/L_1\to \mathbb{Z}$. Assume that $T_1$ and $T_2$ are different tilings (otherwise we are done), then there is at least one $x_1+L_1\in L_0/L_1$ such that $\tilde{h}(x_1+L_1)\not=0$. Without loss of generality we can assume $\tilde{h}(x_1+L_1)>0$, otherwise we can swap $T_1$ and $T_2$. 
Consider any path $x_1\to\ldots\to x_2$ which lies in $T_1$ (viewed as a digraph on the plane) and is maximal in a sense that there is no $x_3\not=x_2$ such that $x_2\to\ldots\to x_3$ is a path in $T_1$. Note that such a maximal path exists due to \Cref{Lem: Positive type is acyclic}. Also note that it can be empty (in the case of $x_1=x_2$). Clearly, $x_2+L_1$ is a sink of $T_1$. It is also clear that $\tilde{h}(x_2+L_1)\ge \tilde{h}(x_1+L_1)>0$, which also shows that $x_2 + L_1 \neq 0 + L_1$. We perform a flip of $T_1$ at $x_2+L_1$ and obtain a new tiling $T_1'$ of the same type $(\gamma_1,\gamma_2,\gamma_3)$. Now let $h'(x)=\frac{h_{T_1'}(x)-h_{T_2}(x)}{3}$ and let $\tilde{h'}$ be the corresponding map from the quotient. It is again well defined, since $T_1'$ and $T_2$ are of the same type, and integer valued. Note that $\tilde{h'}(x_2+L_1)-\tilde{h}(x_2+L_1)=\frac{h_{T'}(x_2)-h_{T_1}(x_2)}{3}=-1$ and for any $x+L_1\not=x_2+L_1$ we have $\tilde{h'}(x+L_1)=\tilde{h}(x+L_1)$. Therefore, 
$$\sum_{\substack{x+L_1 \in L_0/L_1 \\ \tilde{h'}(x+L_1)>0}}\tilde{h'}(x+L_1)-\sum_{\substack{x+L_1 \in L_0/L_1 \\ \tilde{h}(x+L_1)>0}}\tilde{h}(x+L_1)=-1.$$
Repeating the same process, we will eventually obtain a tiling $T_1''$ and the corresponding map $\tilde{h''}$ such that $\sum_{x+L_1: \tilde{h''}(x+L_1)>0}\tilde{h''}(x+L_1)=0$. In other words, the function $\tilde{h''}$ only takes non-positive values. If $\tilde{h''}$ is the zero map, we are done. Otherwise, we swap the roles of $T_1''$ and $T_2$, changing $\tilde{h''}$ to $-\tilde{h''}$, which only takes non-negative values, and repeat the above process eliminating the positive values.  
\end{proof}

Reformulating the above theorem for cuts gives the following. 

\begin{proof}[Proof of \Cref{Theo: Mutation is transitive}]
    Translate \Cref{Theo: Flips are transitive} via \Cref{Rem: Tilings and Periodic cuts}.
\end{proof}

Let us note that we therefore have that every tiling containing lozenges of all three types can be obtained from a tiling $T_\gamma$ by flips. This has interesting consequences for the representation theory of the $2$-representation infinite algebras, see \Cref{Cor: Tilting equiv to skewed AIR}. 

\begin{Rem}
    In \Cref{Rem: Our tilings are lifted}, we noted that for each nontrivial type $\gamma = (\gamma_1, \gamma_2, \gamma_3)$ of a tiling $T$, we can find a tiling $T_\gamma$ of the same type that is periodic with respect to a lattice $L_2 \supseteq L_1$ such that $L_0/L_2$ is cyclic. The tiling $T_\gamma$ was constructed using an AIR-cut associated to $L_0/L_2$. Together with \Cref{Theo: Flips are transitive}, we therefore see that if $\gamma_1, \gamma_2, \gamma_3 > 0$, then any tiling $T$ of type $\gamma$ is flip-equivalent to $T_\gamma$.     
\end{Rem}

We conclude by reproving \Cref{Theo:SL3 Classification} from the perspective of tilings. This proof is independent of \Cref{Theo:AIR cuts} and \Cref{Theo: LeMeur}. 

\begin{proof}[Proof of \Cref{Theo:SL3 Classification}]
    We use the same notation as in the theorem. It then suffices to show that the conditions from \Cref{Theo: Divisibility conditions} have a solution $(\gamma_1, \gamma_2, \gamma_3)$ with $\gamma_i > 0$ if and only if $G$ is not $C_2 \times C_2$ and the embedding $G \leq \SL_3(k)$ does not factor through an embedding $\SL_2(k) \to \SL_3(k)$. 
    
    First, we consider $G \simeq C_2 \times C_2$. Up to a change of basis for $L_1$, the only matrix $B$ for $G$ is then $B = (\begin{smallmatrix} 2 & 0 \\ 0 & 2\end{smallmatrix})$, and hence we have $4 \mid 2 \gamma_1$ and $4 \mid 2 \gamma_2$, and the only solution with $\gamma_1, \gamma_2 > 0$ is $(2,2,0)$. 

    Next, if the embedding $G \leq \SL_3(k)$ factors through $\SL_2(k) \leq \SL_3(k)$, we choose without loss of generality $ B= (\begin{smallmatrix} 1 & b_1 \\ 0 & n    \end{smallmatrix})$, and we see immediately that $n \mid \gamma_1$ only allows for solutions $\gamma_1 \in \{0,n \}$. 

    Now, assume $G$ is none of the above groups. Then by \Cref{Rem: Positive matrix}, the corresponding matrix can be chosen to be in upper triangular form $B = (\begin{smallmatrix} a_1 & b_1 \\ 0 & b_2    \end{smallmatrix} )$. By assumption, we have $n=a_1b_2$, as well as $a_1 > 1$ and $b_2 > 0$. Furthermore, using the $\SL_2(\mathbb{Z})$-action, we can assume that that $0 \leq b_1 < a_1$. Then, it is easy to check that $(b_2, a_1 - b_1, n - b_2 - a_1 + b_1) = (b_2, a_1 - b_1, a_1b_2 + b_1 - b_2 - a_1)$ is a solution to the conditions from \Cref{Theo: Divisibility conditions}. As long as $n = a_1b_2 > 4$, we have $ a_1b_2 + b_1 - b_2 - a_1 > 0 $, and hence the triple is the type of a higher preprojective cut. In the case $n=4$, we know that $G$ must be cyclic of order $4$, and, after possibly permuting the summands $\rho_1, \rho_2, \rho_3$, a direct computation shows that $B = (\begin{smallmatrix} 4 & 2 \\ 0 & 1    \end{smallmatrix} ) $, hence $ a_1b_2 + b_1 - b_2 - a_1 = 4 + 2 -1 - 4 = 1 > 0 $  In the case $n=a_1 b_2 = 3$, a direct computation shows $B = (\begin{smallmatrix} 3 & 2 \\ 0 & 1    \end{smallmatrix} ) $, and so we have also in this case that $ a_1b_2 + b_1 - b_2 - a_1 = 3 + 2 -1-3 = 1 > 0$. Thus, in all cases, we found the type of a higher preprojective cut.   
\end{proof}
\section*{Acknowledgement}
The authors thank Martin Herschend for many helpful discussions and comments on previous versions of this article. We thank Fabio Toninelli for answering questions regarding his lecture notes and pointing out the proof strategy for \Cref{Theo: Flips are transitive}.  
The second author was supported by a fellowship from the Wenner-Gren Foundations (grant WGF2022-0052).

\printbibliography

\end{document}